 \newcommand{\Lip}{\mathrm{Lip}}         \newcommand{\edge}{M^{\mathrm{gr}}} \newcommand{\goodtimes}{\vvmathbb{T}}  \newcommand{\tmix}{t_{\mathrm{mix}}}
\newcommand{\newgraph}{\vvmathbb{G}} \newcommand{\hit}{t_{\mathrm{hit}}} \newcommand{\level}{\lambda}
\newcommand{\nbors}{\tilde G}
\def\stocmode{0} \def\jamesmode{0} \def\arxivmode{1} \def\fastmode{0}   \def\showauthornotes{1}  \def\showkeys{0} \def\showdraftbox{1} \def\showcolorlinks{1} \def\usemicrotype{1} \def\showfixme{1} 
\newtheorem{theorem}{Theorem}[section]
\newtheorem*{theorem*}{Theorem}
\newtheorem*{proposition*}{Proposition}
\newtheorem{lemma}[theorem]{Lemma}
\newtheorem*{lemma*}{Lemma}
\newtheorem{corollary}[theorem]{Corollary}
\newtheorem*{conjecture*}{Conjecture}
\newtheorem{fact}[theorem]{Fact}
\newtheorem*{fact*}{Fact}
\newtheorem*{exercise*}{Exercise}
\newtheorem*{hypothesis*}{Hypothesis}
\theoremstyle{definition}
\newtheorem{definition}[theorem]{Definition}
\newtheorem{exercise-easy}[theorem]{Exercise}
\newtheorem{exercise-med}[theorem]{Exercise}
\newtheorem{exercise-hard}[theorem]{Exercise$^\star$}
\newtheorem*{claim*}{Claim}
\newtheorem{remark}[theorem]{Remark}
\newtheorem*{remark*}{Remark}
\newtheorem*{observation*}{Observation}
\let\mathbb\varmathbb
\let\vvmathbb\mathbb
\newcommand{\savehyperref}[2]{\texorpdfstring{\hyperref[#1]{#2}}{#2}}
\newcommand{\Sref}[1]{\hyperref[#1]{\S\ref*{#1}}}
\newcommand{\Authornote}[2]{{\sffamily\small\color{blue}{[#1: #2]}}\medskip}
\newcommand{\Authornotecolored}[3]{{\sffamily\small\color{#1}{[#2: #3]}}}
\newcommand{\Authorcomment}[2]{{\sffamily\small\color{gray}{[#1: #2]}}}
\newcommand{\Authorstartcomment}[1]{\sffamily\small\color{gray}[#1: }
\newcommand{\Authorfnote}[2]{\footnote{\color{red}{#1: #2}}}
\newcommand{\Authorfixme}[1]{\Authornote{#1}{\textbf{??}}}
\newcommand{\Authormarginmark}[1]{\marginpar{\textcolor{red}{\fbox{\Large #1:!}}}}
\newcommand{\Authornote}[2]{}
\newcommand{\Authornotecolored}[3]{}
\newcommand{\Authorcomment}[2]{}
\newcommand{\Authorstartcomment}[1]{}
\newcommand{\Authorfnote}[2]{}
\newcommand{\Authorfixme}[1]{}
\newcommand{\Authormarginmark}[1]{}
\newcommand{\Esymb}{\mathbb{E}}
\newcommand{\Psymb}{\mathbb{P}}
\newcommand{\Vsymb}{\mathbb{V}}
\DeclareMathOperator*{\E}{\Esymb}
\DeclareMathOperator*{\Var}{\Vsymb}
\DeclareMathOperator*{\ProbOp}{\Psymb}
\renewcommand{\Pr}{\ProbOp}
\newcommand{\textparen}[1]{\text{(#1)}}
\newcommand{\because}[1]{\textparen{because #1}}
\renewcommand{\because}[1]{\textparen{because #1}}
\newcommand{\defeq}{\stackrel{\mathrm{def}}=}
\newcommand{\seteq}{\mathrel{\mathop:}=}
\newcommand\bdot\bullet
\newcommand{\Ind}{\mathbb I}
\newcommand{\Ind}{\mathds 1}
\DeclareMathOperator{\supp}{supp}
\newcommand{\Z}{\mathbb Z}
\newcommand{\N}{\mathbb N}
\newcommand{\R}{\mathbb R}
\newcommand{\cA}{\mathcal A}
\newcommand{\cB}{\mathcal B}
\newcommand{\cE}{\mathcal E}
\newcommand{\cG}{\mathcal G}
\newcommand{\cH}{\mathcal H}
\newcommand{\cP}{\mathcal P}
\newcommand{\cT}{\mathcal T}
\newcommand{\cY}{\mathcal Y}
\renewcommand{\leq}{\leqslant}
\renewcommand{\le}{\leqslant}
\renewcommand{\geq}{\geqslant}
\renewcommand{\ge}{\geqslant}
\let\epsilon=\varepsilon
\numberwithin{equation}{section}
\newcommand\MYcurrentlabel{xxx}
\newcommand{\MYstore}[2]{%
  \global\expandafter \def \csname MYMEMORY #1 \endcsname{#2}%
}
\newcommand{\MYload}[1]{%
  \csname MYMEMORY #1 \endcsname%
}
\newcommand{\MYnewlabel}[1]{%
  \renewcommand\MYcurrentlabel{#1}%
  \MYoldlabel{#1}%
}
\newcommand{\MYdummylabel}[1]{}
\newcommand{\torestate}[1]{%
  \let\MYoldlabel\label%
  \let\label\MYnewlabel%
  #1%
  \MYstore{\MYcurrentlabel}{#1}%
  \let\label\MYoldlabel%
}
\newcommand{\restatetheorem}[1]{%
  \let\MYoldlabel\label
  \let\label\MYdummylabel
  \begin{theorem*}[Restatement of \prettyref{#1}]
    \MYload{#1}
  \end{theorem*}
  \let\label\MYoldlabel
}
\newcommand{\restatelemma}[1]{%
  \let\MYoldlabel\label
  \let\label\MYdummylabel
  \begin{lemma*}[Restatement of \prettyref{#1}]
    \MYload{#1}
  \end{lemma*}
  \let\label\MYoldlabel
}
\newcommand{\restateprop}[1]{%
  \let\MYoldlabel\label
  \let\label\MYdummylabel
  \begin{proposition*}[Restatement of \prettyref{#1}]
    \MYload{#1}
  \end{proposition*}
  \let\label\MYoldlabel
}
\newcommand{\restatefact}[1]{%
  \let\MYoldlabel\label
  \let\label\MYdummylabel
  \begin{fact*}[Restatement of \prettyref{#1}]
    \MYload{#1}
  \end{fact*}
  \let\label\MYoldlabel
}
\newcommand{\restate}[1]{%
  \let\MYoldlabel\label
  \let\label\MYdummylabel
  \MYload{#1}
  \let\label\MYoldlabel
}
\newcommand{\addreferencesection}{
  \phantomsection
\ifnum\stocmode=0
  \addcontentsline{toc}{section}{References}
\else
  \addcontentsline{toc}{section}{References \hspace*{1in} --------- End of extended abstract ---------}
\fi

}
\newcommand{\e}{\epsilon}
\let\origparagraph\paragraph
\renewcommand{\paragraph}[1]{\vspace*{-15pt}\origparagraph{#1.}}
\let\pref=\prettyref
\renewcommand{\Vsymb}{\mathrm{Var}}
\newcommand{\diam}{\mathrm{diam}}
\renewcommand{\Ind}{\bm 1}
\newcommand\f{\varphi}
\begin{document}

\title{Diffusive estimates for random walks on \\ stationary random graphs of polynomial growth}

\author{Shirshendu Ganguly \and James R. Lee \and Yuval Peres}

\date{}

\maketitle

\linespread{1.00}

\begin{abstract} Let $(G,\rho)$ be a stationary random graph, and use $B^G_{\rho}(r)$ to denote the ball of radius $r$ about $\rho$ in $G$. Suppose that $(G,\rho)$ has annealed polynomial growth,
    in the sense that $\E[|B^G_{\rho}(r)|] \leq O(r^k)$ for some $k > 0$ and every $r \geq 1$.

Then there is an infinite sequence of times $\{t_n\}$ at which the random walk $\{X_t\}$ on $(G,\rho)$ is at most diffusive: Almost surely (over the choice of $(G,\rho)$), there is a number $C > 0$ such that \[ \E \left[\mathrm{dist}_G(X_0, X_{t_n})^2 \mid X_0 = \rho, (G,\rho)\right]\leq C t_n\qquad \forall n \geq 1\,. \] This result is new even in the case when $G$ is a stationary random subgraph of $\Z^d$. Combined with the work of Benjamini, Duminil-Copin, Kozma, and Yadin (2015), it implies that $G$ almost surely does not admit a non-constant harmonic function of sublinear growth.

To complement this, we argue that passing to a subsequence of times $\{t_n\}$ is necessary, as there are stationary random graphs of (almost sure) polynomial growth where the random walk is almost surely superdiffusive at an infinite subset of times. \end{abstract}

\tableofcontents

\section{Introduction}

It is a classical fact that the the standard random walk $\{X_n\}$ on $\Z^d$ exhibits
{\em diffusive behavior:}  $\E \|X_0-X_n\|_2^2 \asymp n$.
The well-known estimates of Varopoulos and Carne \cite{Carne85,Var85}
show that, if $\{X_n\}$ is random walk on a graph $G$ of polynomial growth,
then the speed can be at most slightly superdiffusive:  $\E d_G(X_0,X_n)^2 \leq O(n \log n)$,
where $d_G$ is the graph metric on $G$.

Kesten \cite{Kesten86} examined the distribution of the random walk on percolation clusters in $\Z^d$.
Suppose that $(G,\rho)$
is a stationary random subgraph of $\Z^d$.  This means that if $\{X_n\}$
is the random walk conditioned on $G$ with $X_0=\rho$, then
$(G,X_0) \stackrel{\mathrm{law}}{=} (G,X_1)$.
Kesten's argument can be used to show that, in this case,
\begin{equation}\label{eq:kesten}
\forall n \geq 1,\ \E \|X_0-X_n\|_2^2 \leq O(n) \qquad \textrm{almost surely over $(G,\rho)$.}
\end{equation}
On the other hand, Kesten's approach only works for the {\em extrinsic} Euclidean metric,
and not for the intrinsic metric $d_G$ (which can be arbitrarily larger).

Kesten asked whether \eqref{eq:kesten} holds for {\em any} (deterministic) subgraph $G$ of $\Z^d$.
Barlow and Perkins \cite{BP89} answered this negatively:  They exhibit a subgraph
of $\Z^2$ on which the Varopoulos-Carne bound is asymptotically tight (even for the Euclidean metric).

\medskip
\paragraph{Random walk and the growth of harmonic functions}

One motivation for studying situations in which Varopoulos-Carne can be improved
comes from the theory of harmonic functions and their role in geometric analysis
and in recent proofs of the central limit theorem for random graphs.
Indeed, this led the authors of \cite{benjamini} to study harmonic functions
in random environments.

Consider a random rooted graph $(G,\rho)$. We will assume that $G$ is locally finite and almost surely connected. Let $\{X_n\}$ denote the random walk conditioned on $(G,\rho)$.  Unless otherwise stated, we take $X_0=\rho$.

\begin{definition}
  $(G,\rho)$ is said to be {\em stationary} if $(G,X_0)\overset{\textrm{law}}{=}(G,X_1)$.
\end{definition}

Let $V_G$ be the vertex set of $G$, and let $d_G$ denote the graph metric on $G$.  For $x \in V_G$, we use the notation \[ B^G_{x}(r) = \left\{ y \in V_G : d_G(x,y) \leq r\right\}\,. \] The random graph $(G,\rho)$ has {\em annealed polynomial growth} if there exist constants $c,d > 0$ such that for $r \geq 1$, \begin{equation}\label{eq:strong-poly-growth} \E\left[|B^G_{\rho}(r)|\right] \leq c r^d\,. \end{equation} Say that the random walk $\{X_n\}$ is {\em at most diffusive} if there is a constant $C > 0$ such that \begin{equation}\label{eq:subdiffusive} \E\left[d_G(X_0, X_n)^2\right] \leq Cn \end{equation} for all $n \geq 1$.

We now state the main result of \cite{benjamini} for the special case of stationary random graphs. A harmonic function conditioned on $G$ is a map $h : V_G \to \R$ satisfying \[ \E[h(X_1) \mid X_0 = x] = h(x) \quad \forall x\in V_G\,. \] Say that $h$ has {\em sublinear growth} if for every infinite sequence $\{x_n\} \subseteq V_G$ with $d_G(\rho,x_n) \to \infty$, it holds that \[ \lim_{n \to \infty} \frac{|h(x_n)|}{d_G(\rho,x_n)} = 0\,. \]

\begin{theorem}[\cite{benjamini}] Suppose $(G,\rho)$ is a stationary random graph with annealed polynomial growth, and suppose the random walk on $(G,\rho)$ is at most diffusive in the sense of \eqref{eq:subdiffusive}. Then almost surely $G$ does not admit a non-constant harmonic function of sublinear growth. \end{theorem}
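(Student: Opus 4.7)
The plan is to argue by contradiction: suppose that with positive probability over $(G,\rho)$, the graph $G$ admits a non-constant harmonic function $h\colon V_G\to\R$ of sublinear growth. By a measurable selection I choose such an $h$ as a measurable functional of $(G,\rho)$ on this positive-probability event $A$, and, after subtracting a constant, arrange that $h(\rho)=0$ on $A$. The proof then pits a variance \emph{lower bound}, coming from harmonicity and stationarity, against a variance \emph{upper bound}, coming from sublinear growth and the diffusive hypothesis.

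For the lower bound, set $M_n := h(X_n)$. Harmonicity makes $\{M_n\}$ a martingale conditional on $G$. Stationarity of $(G,\rho)$ propagates to the environment seen from the walker, so the triples $(G,X_k,X_{k+1})$ are identically distributed in $k$ under the annealed law. Combining this with orthogonality of martingale increments (conditional on $G$) and taking annealed expectations,
\[
\E[h(X_n)^2] \;=\; \sum_{k=0}^{n-1}\E\bigl[(h(X_{k+1})-h(X_k))^2\bigr] \;=\; n\,\sigma^2, \qquad \sigma^2 := \E\bigl[(h(X_1)-h(X_0))^2\bigr].
\]
A short mass-transport / re-rooting argument using non-constancy of $h$ on $A$ then forces $\sigma^2>0$. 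For the matching upper bound, sublinear growth of $h$ gives, for each $\epsilon>0$, a random radius $R_\epsilon(G,h)$ with $|h(x)|\leq \epsilon\,d_G(\rho,x)$ whenever $d_G(\rho,x)\geq R_\epsilon$, and a random constant $C_\epsilon(G,h)$ bounding $|h|$ on $B^G_\rho(R_\epsilon)$. Hence $h(X_n)^2\leq C_\epsilon^2 + \epsilon^2\,d_G(\rho,X_n)^2$, and the diffusive hypothesis $\E[d_G(\rho,X_n)^2]\leq Cn$ yields
\[
\E[h(X_n)^2] \;\leq\; \E[C_\epsilon^2] \,+\, \epsilon^2 C n.
\]
Dividing by $n$, sending $n\to\infty$, and then $\epsilon\to 0$ forces $\sigma^2\leq 0$, the desired contradiction.

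The main obstacle I anticipate is integrability: the random quantities $C_\epsilon(G,h)$ and $h(\rho)$ are not a priori in $L^2$ of the annealed law, so the annealed inequality above is not immediate. I would address this by partitioning $A$ into countably many subevents on which $h(\rho)$, $R_\epsilon$, $C_\epsilon$, and a ``non-constancy witness'' (an edge across which $h$ takes distinct values in the walker's neighborhood) are all uniformly bounded, then running the argument after conditioning on a single such subevent, taking care to preserve the martingale-variance identity by passing through the shift-invariance of the environment-seen-from-walker Markov chain rather than conditioning $(G,\rho)$ naively. A secondary subtlety is deducing $\sigma^2>0$ from mere a.s.\ non-constancy of $h$; this is precisely where the stationarity assumption (not merely local finiteness of $G$) is essential, since it lets the walker see with positive annealed probability an edge across which $h$ takes distinct values, rather than such edges being ``hidden at infinity''.
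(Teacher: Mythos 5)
Your route is the classical martingale--variance argument: harmonicity makes $h(X_n)$ a quenched martingale, orthogonality of increments plus stationarity of the environment seen from the walker gives $\E[h(X_n)^2]=n\sigma^2$, and sublinearity plus diffusivity caps $\E[h(X_n)^2]$ at $o(n)$. This is genuinely different from the argument of \cite{benjamini} that the paper invokes: there the gradient is controlled by the \emph{entropy} inequality \eqref{eq:bdky}, $\E[|h(X_0)-h(X_1)|\mid (G,\rho)]\le\sqrt{4[H_{(G,\rho)}(1)+H_{(G,\rho)}(t)-H_{(G,\rho)}(1,t)]\cdot\E[h(X_t)^2\mid(G,\rho)]}$; the mutual-information factor is shown to be $O(1/t)$ along a subsequence using polynomial growth of the annealed entropy, while diffusivity and sublinearity make the second factor $o(t)$. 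Crucially that estimate is \emph{quenched in $h$}: it applies simultaneously to every harmonic function on a fixed good environment, with no selection and no integration of $h$-dependent quantities over the environment. That feature is not cosmetic --- it is exactly what sidesteps the two obstacles you flag, and neither of your proposed remedies closes them.

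First, the identity $\E[(h(X_{k+1})-h(X_k))^2]=\sigma^2$ for every $k$ needs more than a measurable selection of $h$ as a functional of the \emph{rooted} graph $(G,\rho)$: stationarity only identifies the laws of $(G,X_k,X_{k+1})$ and $(G,X_0,X_1)$ as doubly-rooted graphs, so to transport the increment you need the gradient of the selected $h$ to be an isomorphism-equivariant functional of the \emph{unrooted} graph (additive constants are harmless, but nothing else is). If the selection sees the root, then $(G,X_1)$ may select a different harmonic function than $(G,X_0)$ did and the identity fails; producing a canonical root-independent non-constant element of a possibly high-dimensional space of sublinear harmonic functions is an unsolved step in your outline, not a routine one. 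Second, the truncation scheme: conditioning on a sub-event of $A$ on which $h(\rho)$, $R_\e$, $C_\e$ are bounded destroys stationarity, because such events are not invariant under re-rooting along the walk; the per-step identity, and hence the $n\sigma^2$ growth, are lost at exactly the moment you restore integrability, and the promised detour ``through shift-invariance of the environment chain'' is not spelled out and is not obvious. (By contrast, the entropy proof never integrates $C_\e$ over the environment: it bounds $\E[h(X_t)^2\mid(G,\rho)]\le C_\e(G,h)^2+\e^2 C t$ for the one graph at hand and multiplies by the $O(1/t)$ information term.) Until you exhibit an equivariant selection and a stationarity-preserving truncation, the contradiction $0<\sigma^2\le 0$ is not established.
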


Our main result is that the diffusivity assumption can be removed.
Say that $(G,\rho)$ has {\em weakly annealed polynomial growth} if there are non-negative constants $c,s \geq 0$ such that for $r \geq 1$, \begin{equation}\label{eq:poly-growth} \E\left[\log |B^G_{\rho}(r)|\right] \leq s \log r + c\,. \end{equation} (Note that this is a {\em weaker} assumption than annealed polynomial growth.)

\begin{theorem}\label{thm:main} Suppose $(G,\rho)$ is a stationary random graph with weakly annealed polynomial growth. Then almost surely $G$ does not admit a non-constant harmonic function of sublinear growth. \end{theorem}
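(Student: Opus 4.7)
The plan is to reduce Theorem~\ref{thm:main} to the main subsequence-diffusivity statement of the paper (the result advertised in the abstract, proved in the body under the weakly annealed hypothesis \eqref{eq:poly-growth}) and then to combine it with a quenched martingale/sublinearity computation in the spirit of BDCKY. The key observation is that the BDCKY deduction is \emph{local in time}: diffusivity along a single subsequence $\{t_n\}$ is already enough to rule out non-constant sublinear harmonic functions, and no growth hypothesis enters the second half of the argument at all.

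\textit{Step 1 (subsequence diffusivity under \eqref{eq:poly-growth}).} I would show: almost surely there exist times $\{t_n\}\to\infty$ and a (random) $C<\infty$ with $\E[d_G(X_0,X_{t_n})^2 \mid G,\rho] \leq C\, t_n$. The natural tool is an entropy bound combined with a heat-kernel / Varopoulos--Carne-type comparison. Let $H_n := H(X_n\mid G,\rho)$ be the conditional entropy of the walk at time $n$. Since $X_n \in B_\rho^G(n)$ almost surely, $H_n \leq \log |B_\rho^G(n)|$, and taking expectations yields $\E[H_n] \leq s\log n + c$---this is precisely the step where the log-inside-expectation form of \eqref{eq:poly-growth} is essential, and where one sees why the weakly annealed hypothesis is already strong enough. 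A heat-kernel comparison then relates $H_n$ to $\E[d_G(X_0,X_n)^2\mid G,\rho]$: if the mean-square displacement were strictly super-linear along every subsequence a.s., then $H_n$ would have to grow faster than $\log n$, contradicting the entropy bound. A Fatou/Borel--Cantelli argument upgrades this in-mean contradiction to the desired almost sure subsequence statement.

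\textit{Step 2 (deduction of Theorem~\ref{thm:main}).} Suppose, for contradiction, that on a set of positive probability $(G,\rho)$ admits a non-constant harmonic function $h$ of sublinear growth; extending $h\equiv 0$ off this event, we view $h$ as a measurable functional of $(G,\rho)$. Fix a realization $(G,\rho)$ on which both the conclusion of Step~1 holds (with some random $C$) and $h$ is non-constant sublinear. Since $h$ is harmonic, $\{h(X_n)\}$ is a quenched martingale, so orthogonality of increments gives
\[ \E\!\left[(h(X_{t_n})-h(\rho))^2 \mid G,\rho\right] \;=\; \sum_{k=0}^{t_n-1} \E\!\left[\sigma_h^2(X_k,G) \mid G,\rho\right], \qquad \sigma_h^2(y,G) := \tfrac{1}{\deg(y)}\sum_{z\sim y} (h(z)-h(y))^2. \]
Passing if necessary to an ergodic component of the stationary Markov process on $(G,\rho)$, Birkhoff's theorem yields $\tfrac{1}{n}\sum_{k<n}\sigma_h^2(X_k,G) \to \bar\sigma_h^2 := \E[\sigma_h^2(\rho,G)] > 0$ almost surely (with $\bar\sigma_h^2 >0$ because $h$ is non-constant on the connected component of $\rho$), so the right-hand side is at least $c\,t_n$ for some random $c>0$ and all large $n$. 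On the other hand, sublinearity of $h$ produces, for each $\epsilon>0$, a random $M_\epsilon(G,\rho)$ with $h(x)^2 \leq 2\epsilon^2\, d_G(\rho,x)^2 + 2M_\epsilon^2$ for every $x\in V_G$; combined with the quenched diffusivity along $\{t_n\}$,
\[ \E\!\left[(h(X_{t_n})-h(\rho))^2 \mid G,\rho\right] \;\leq\; 4\epsilon^2\, C\, t_n + O(1). \]
Dividing by $t_n$, letting $n\to\infty$, then $\epsilon\to 0$, gives $c \leq 0$, contradicting $c>0$.

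The main obstacle is Step~1---specifically, passing from the $L^1$ growth hypothesis $\E[|B_\rho(r)|] \leq O(r^k)$ that appears naturally in the abstract to the weakly annealed form \eqref{eq:poly-growth}. Under the stronger hypothesis, $|B_\rho(r)|$ is controlled in $L^1$ and the entropy/heat-kernel manipulations are quite robust; under \eqref{eq:poly-growth}, $|B_\rho(r)|$ can be atypically large on small-probability events, and one must work throughout with the logarithmic size of the ball and introduce a good-event decomposition of $(G,\rho)$ in a way that does not destroy stationarity. Step~2, by contrast, is essentially formal once the subsequence-diffusivity estimate of Step~1 is in hand.
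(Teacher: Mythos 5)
Your Step 2 takes a genuinely different route from the paper, and it is the step with the real gap. The paper deduces \pref{thm:main} from \pref{thm:goodtimes} (not merely from \pref{thm:goodtimes-intro}): the good times $\{t_n\}$ come equipped with the extra property $H_{t_n}-H_{t_{n-1}}\leq C/t_n$, and the deduction runs through the quenched BDCKY inequality \eqref{eq:bdky}, which bounds $\E[|h(X_0)-h(X_1)|\mid (G,\rho)]$ by $\sqrt{4\,I\cdot \E[h(X_t)^2\mid(G,\rho)]}$ where $I$ is the mutual information $H_{(G,\rho)}(1)+H_{(G,\rho)}(t)-H_{(G,\rho)}(1,t)$. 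That inequality holds for \emph{each fixed graph and each fixed harmonic $h$ on it}, so no measurability of $h$ in the environment is ever needed; the entropy decrement along $\{t_n\}$ kills the factor $I$, and sublinearity plus diffusivity kills the other factor. Your replacement — martingale orthogonality plus Birkhoff's theorem for the environment process $(G,X_k)$ — requires $\sigma_h^2(\cdot,G)$ to be a measurable, \emph{root-equivariant} functional of the rooted graph: the function $h$ you sum $\sigma_h^2(X_k,G)$ over must be the \emph{same} $h$ (up to an additive constant) no matter which vertex $X_k$ you re-root at, or else $\sigma_h^2(X_k,G)$ is not of the form $f(G,X_k)$ for a fixed observable $f$ and the ergodic theorem does not apply to the quantity appearing in your orthogonality identity. "Extending $h\equiv 0$ off this event" does not produce such a selection; the space of sublinear harmonic functions modulo constants may be large, and constructing an equivariant measurable choice (with, additionally, $\E[\sigma_h^2(\rho,G)]$ under control) is a genuine piece of work that you have not done. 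This is precisely the difficulty the entropy route is designed to bypass.

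Separately, your Step 1 cannot be read as a proof sketch: the claim that superdiffusivity along every subsequence would force $H_n$ to grow faster than $\log n$ is unsupported, and no such "heat-kernel comparison" is available — controlling the \emph{second} moment of the \emph{intrinsic} displacement is the paper's main technical achievement, occupying Sections 2 and 3 (single-scale Euclidean embeddings \`a la \cite{CKR01,KLMN05}, the martingale decomposition of \cite{NPSS06,DLP13}, the mass transport principle, and a multi-scale pigeonhole over $\approx\log n$ scales). Indeed \pref{thm:badtimes} shows the desired bound genuinely fails at some times, so no soft entropy argument can give it at all times, and extracting the good subsequence is where the work lies. If you instead simply cite the paper's subsequence-diffusivity theorem, you must cite the stronger form \pref{thm:goodtimes} (with the entropy decrement) and use \eqref{eq:bdky} as the paper does, or else fill the equivariant-selection gap above; as written, the proposal proves the theorem in neither way.
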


Our proof of \pref{thm:main} proceeds in the natural way:  We show that weakly annealed polynomial growth always yields a sequence of times at which the random walk is at most diffusive.

\begin{theorem}\label{thm:goodtimes-intro} If $(G,\rho)$ is a stationary random graph of annealed polynomial growth, then for every $\e > 0$, there is a constant $C > 0$ and an infinite (deterministic) sequence of times $\{t_n\}$ such that \[ \Pr\left({\E}\left[d_G(X_0, X_{t_n})^2 \mid (G,\rho)\right] \leq C t_n\right) \geq 1-\e\,. \] \end{theorem}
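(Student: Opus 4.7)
The plan is to produce a deterministic infinite sequence $\{t_n\}$ along which the \emph{annealed} squared displacement $D(t) := \E[d_G(X_0, X_t)^2]$ is at most $C' t$, and then to apply Markov's inequality to the conditional random variable $\E[d_G(X_0, X_t)^2 \mid (G,\rho)]$ in order to upgrade to the probability-$(1-\e)$ statement with $C = C'/\e$. The core task is therefore the annealed diffusive bound along a subsequence.

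Three ingredients feed the analysis. First, by Minkowski together with the stationarity identity $d_G(X_n, X_{n+m}) \stackrel{d}{=} d_G(X_0, X_m)$, the function $\sqrt{D(n)}$ is subadditive in $n$, so $\sqrt{D(n)}/n$ converges. Second, let $H_n := H(X_n \mid (G,\rho))$. Since $X_n$ is supported in $B^G_{\rho}(n)$ and annealed polynomial growth plus Jensen give $\E \log |B^G_{\rho}(n)| \leq d \log n + O(1)$, one obtains $\E H_n \leq d \log n + O(1)$. Third, the Varopoulos--Carne bound $p_n(\rho,y) \leq C \exp(-d_G(\rho,y)^2/2n)$, after averaging $-\log p_n(\rho,\cdot)$ against $p_n(\rho,\cdot)$, yields the conditional inequality
\[
H_n \;\geq\; \frac{\E[d_G(X_0, X_n)^2 \mid (G,\rho)]}{2n} \;-\; O(1),
\]
which combined with the annealed entropy bound gives the weak estimate $D(n) = O(n \log n)$ and forces the walk speed $\lim \sqrt{D(n)}/n$ to vanish almost surely.

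The main part of the proof is to pass from this $O(n\log n)$ annealed estimate to a genuine $O(n)$ estimate along an infinite deterministic subsequence. The telescoping identity
\[
\sum_{k \leq K} \E[H_{2^{k+1}} - H_{2^k}] \;=\; \E H_{2^{K+1}} - \E H_1 \;\leq\; d(K+1)\log 2 + O(1)
\]
produces, by pigeonhole, a positive-density set of dyadic scales $k$ on which $\E[H_{2^{k+1}} - H_{2^k}] \leq C_0$. Equivalently, via the stationarity identity $\E H(X_{2n} \mid X_n) = \E H_n$ (which follows from $(G,X_n) \stackrel{d}{=} (G,\rho)$), one has bounded expected mutual information $\E\,I(X_n; X_{2n}) \leq C_0$ on these good scales. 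The chief obstacle is converting this bounded entropy increment into the linear bound $D(2^k) \leq C' \cdot 2^k$ without re-paying the logarithmic factor lost in the naive Varopoulos--Carne step. The natural route is to use stationarity to re-root the walk at $X_n$, localize $X_n$ on a high-probability event inside a ball of polynomially-controlled volume (using $\E \log |B^G_{X_n}(r)| \leq d \log r + O(1)$), and then run a refined entropy--displacement inequality on the re-rooted chain so that the increment $H_{2n}-H_n$ is compared against displacement between $X_n$ and $X_{2n}$ rather than between $X_0$ and $X_{2n}$. Once the annealed bound $D(t_n) \leq C' t_n$ is achieved along the dyadic subsequence furnished by the pigeonhole step, the Markov estimate from the first paragraph delivers the stated probabilistic conclusion.
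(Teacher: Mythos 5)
There is a genuine gap at the heart of your argument. You correctly identify the ``chief obstacle'' --- upgrading the Varopoulos--Carne-type bound $D(n)=O(n\log n)$ to $D(t_n)=O(t_n)$ along a subsequence --- but the mechanism you propose for overcoming it does not work and is not fleshed out. A bounded entropy increment $H_{2n}-H_n\le C_0$ is, by stationarity, exactly a bound on the mutual information $I(X_n;X_{2n})$; it says $X_{2n}$ is nearly independent of $X_n$, and this carries essentially no information about the displacement $d_G(X_n,X_{2n})$. If you re-root at $X_n$ and run the conditional Varopoulos--Carne estimate, you get $\E[d_G(X_n,X_{2n})^2\mid (G,X_n)]\le 2n\,(H(X_{2n}\mid X_n)+O(1))$, and $\E\,H(X_{2n}\mid X_n)=\E\,H_n\approx d\log n$ --- you pay the full entropy again, not the increment, so the logarithm is not removed. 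The small-increment condition is what feeds the harmonic-function application (via the inequality \eqref{eq:bdky} of Benjamini et al.), but it is not what produces the diffusive estimate; in the paper it appears as a \emph{separate} condition (condition (2) of \pref{thm:folner-version}) alongside the displacement bound, not as its source.

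The paper's actual route is geometric rather than entropic: it reduces, via the mass transport principle, to the stationary walk restricted to a ball $B^G_\rho(r_n)$ chosen to be a good F{\o}lner set; it builds, for each scale $8^k$, a $1$-Lipschitz embedding into Hilbert space whose co-Lipschitz constant at scale $8^k$ is governed by the local volume ratio $\log\frac{|B_x^G(8^k)|}{|B_x^G(8^{k-1})|}$ (\pref{lem:embed}); it applies the NPSS forward/backward martingale decomposition to get subgaussian tails for the embedded displacement (\pref{lem:rwcontrol}); and it runs a delicate multi-scale pigeonhole on the functional $\Psi_{(G,\rho)}(k_0)$ to find times $n$ and radii $r_n$ at which the \emph{average growth profile} over all $\approx\log n$ scales in $[\sqrt n, n]$ is simultaneously controlled. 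Your pigeonhole on dyadic entropy increments is structurally similar in spirit to this last step, but it is applied to the wrong quantity. Note also that \pref{thm:badtimes} shows superdiffusive behavior genuinely occurs at infinitely many times on some polynomial-growth stationary graphs, so any correct proof must select good times using volume-growth information at all intermediate scales; a soft subadditivity-plus-entropy argument of the kind you sketch cannot succeed without that input.
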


Note that \pref{thm:goodtimes-intro} is new even for stationary random subgraphs of $\Z^d$ since, in contrast
to Kesten's work, we are able to bound the speed of the random walk in the intrinsic metric.
To complement this result, we show that passing to a subsequence of times is necessary:
There are stationary random graphs of (almost sure) polynomial growth
on which the random walk is almost surely superdiffusive at an infinite subset of times.

\begin{theorem}[See \pref{thm:exceptional}]
\label{thm:badtimes} There is a stationary random graph $(G,\rho)$ of almost sure polynomial growth such that for an infinite (deterministic) sequence of times $\{t_n\}$, \[ \lim_{n \to \infty} \Pr\left(\E\left[d_G(X_0, X_{t_n})^2 \mid (G,\rho)\right] \geq  t_n (\log t_n)^{0.9} \right) = 1\,. \] \end{theorem}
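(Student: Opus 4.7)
I would aim to construct $(G,\rho)$ as a translation-invariant random decoration of a transient base lattice, calibrated so that polynomial growth holds globally while the random walk gets a superdiffusive boost at a prescribed sequence of times $t_n \to \infty$. Concretely, I would start with a base of almost sure polynomial growth (say $\Z^d$ for $d$ sufficiently large), and at each site $v$ of the base, independently and for each scale $n \geq 1$, attach a ``decoration'' of scale $n$ with probability $q_n$. The decoration is a bounded-degree subgraph rooted at $v$; its internal geometry (depth $\delta_n$, volume $s_n$, typical sojourn time $\tau_n$) and the density $q_n$ must be jointly tuned. Translation-invariance of the construction yields stationarity after rooting at $\rho=0$ (and, if the walk fails to be reversible in the Palm sense, after an additional degree-biasing).

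For polynomial growth, the expected volume at radius $r$ satisfies $\E[|B^G_\rho(r)|] \leq \E[|B^{\text{base}}_\rho(r)|]\cdot(1 + \sum_n q_n s_n)$, so it suffices to choose decoration parameters with $\sum_n q_n s_n < \infty$. Almost sure polynomial growth $|B^G_\rho(r)| \leq C_\omega r^{O(1)}$ then follows by a standard union bound combined with Borel--Cantelli.

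For the superdiffusive bound at times $t_n$, note that the walker's base-range up to time $t_n$ is of order $R_n$. The expected number of scale-$n$ decorations encountered is $\Theta(q_n R_n)$. If a scale-$n$ decoration, upon entry, pushes the walker to graph distance $\delta_n$ from the entry point within time $\tau_n$ with positive probability, then the quenched contribution to the second moment is of order $q_n R_n \delta_n^2$, subject to the time budget $q_n R_n \tau_n \leq t_n$. Equating the contribution to $t_n (\log t_n)^{0.9}$ yields the calibration target $\delta_n^2/\tau_n \asymp (\log t_n)^{0.9}$: the decoration must exhibit strictly superdiffusive internal dynamics, in the sense $\tau_n \ll \delta_n^2$.

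The main obstacle is exactly this last requirement. One must exhibit a bounded-degree, polynomial-volume subgraph on which a reversible random walk, started at a designated entry vertex, reaches graph distance $\delta_n$ from entry in time $\tau_n \ll \delta_n^2$ with constant probability. Simple pendant paths and bounded regions of the base lattice fail here because the walk on them is diffusive. I expect the right gadget to be a finite piece of a graph with strong geometric drift away from the entry point---for instance a suitably truncated biased tree or a finite patch of a graph with non-amenable local structure---of polynomial (not exponential) volume after careful truncation. A secondary challenge is ensuring that the time spent inside decorations does not destroy the approximation of the base walk by an uncorrelated random walk on $\Z^d$; I would handle this via a regenerative-structure argument, pairing each excursion into a decoration with an independent ``reset'' and matching expected sojourn times to the global time budget.
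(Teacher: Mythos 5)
Your high-level plan (a stationary random graph assembled from gadgets at infinitely many scales, with growth and sojourn parameters tuned so that superdiffusivity appears at a prescribed sequence of times) is in the right spirit, but the proposal has two genuine gaps, and they interact.

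First, the displacement accounting for pendant decorations is wrong. A decoration hanging off a site $v$ of the base lattice can only be entered and exited through $v$, so every completed excursion into a decoration contributes \emph{nothing} to $d_G(X_0,X_{t_n})$; it only wastes time and hence makes the base displacement \emph{smaller} (this is how one builds subdiffusive examples, e.g.\ combs). The heuristic ``$q_n R_n \delta_n^2$ over $\Theta(q_n R_n)$ encounters'' treats the excursion displacements as adding in quadrature, but they cancel exactly. The only way a pendant gadget helps is if $X_{t_n}$ is \emph{currently inside} one at depth $D \gg \sqrt{t_n}$, which forces the walk to spend essentially all of its time budget inside a single gadget that is itself superdiffusive --- i.e.\ the $\Z^d$ base does no work and you are back to the original problem of building a bounded-degree, polynomial-volume graph on which the walk is superdiffusive. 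Second, you correctly isolate that gadget as the main obstacle ($\tau_n \ll \delta_n^2$) but do not produce it, and the candidates you name cannot work: a biased tree or a non-amenable patch with genuine drift has exponential volume growth at the relevant scale, and more fundamentally Varopoulos--Carne caps any polynomial-growth graph at $\E\,d_G(X_0,X_t)^2 \leq O(t\log t)$, so ``strong geometric drift'' is unattainable --- only a logarithmic gain is possible.

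The paper's resolution is the stretched expander $G_n[n]$ (a $3$-regular expander on $n$ vertices with every edge subdivided into a path of length $n$), essentially the Barlow--Perkins mechanism: the subdivision makes the volume growth polynomial, while after time $t\approx n^2\log n$ the induced walk on the expander has taken $\approx \log n$ steps and, because the expander's diameter matches its mixing time, is at expander-distance $\approx \log n$, hence graph distance $\approx n\log n$, giving $d^2/t \approx \log n \approx \log t$. Crucially the superdiffusivity comes from the \emph{base} being an expander (so base displacements do not cancel diffusively), not from decorations hung on an amenable lattice --- subdividing $\Z^d$ instead gives $d^2 \approx n^2 m \approx t$, i.e.\ exactly diffusive. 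Stationarity is then obtained not by Poissonian decoration of $\Z^d$ but by nesting copies $H_k = \newgraph(G_{n_k}[n_k];H_{k-1})$ with rapidly decreasing sizes and long buffer tails (which enforce the polynomial growth bound and isolate the scales), rooting each $H_k$ at a stationary vertex, and passing to the Benjamini--Schramm limit. As written, your construction would produce a stationary random graph of polynomial growth on which the walk is \emph{at most} diffusive at all times, so the argument fails at the central step.
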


We remark that instead of $(\log t_n)^{0.9}$, one could put $f(t_n)$ for any function satisfying $f(t) \leq o(\log t)$ as $t \to \infty$. This is almost tight as it nearly matches the Varopoulos-Carne estimate (see, e.g., \cite[Ch. 14]{Woess00}).
Our work leaves open the intriguing question of whether
whether \pref{thm:goodtimes-intro} holds for all times when $(G,\rho)$ is a stationary
random subgraph of $\Z^d$.

\subsection{The absence of non-constant sublinear growth harmonic functions}

Let us recall that the {\em entropy of $X_n$ conditioned on $(G,\rho)$:} 
\begin{equation}\label{entropy} H(X_n \mid (G,\rho)) = \sum_{x \in V_G} \Pr[X_n=x \mid (G,\rho)] \log \frac{1}{\Pr[X_n=x \mid (G,\rho)]}\,, \end{equation}
with the convention that $0 \log 0 = 0$.
Similarly we define $H\left((X_1,X_n) \mid (G,\rho)\right)$ to be the entropy of the joint distribution of $(X_1,X_n)$, conditioned on $(G,\rho)$. 
To simplify notation, we will denote $H(X_n \mid (G,\rho))$ and $H\left((X_1,X_n)\mid (G,\rho)\right)$ by $H_{(G,\rho)}(n)$ and $H_{(G,\rho)}(1,n)$, respectively.

Define the {\em annealed entropy} by \begin{equation}\label{annealed} H_n = \E[H_{(G,\rho)}(n)]\,. \end{equation}

Our proof of \pref{thm:main} is based on the main result of \cite{benjamini} which exploits connections between harmonic functions and the escape rate of random walk on graphs. This reduces proving \pref{thm:main} to proving the following.

\begin{theorem}\label{thm:goodtimes} If $(G,\rho)$ is a stationary random graph of weakly annealed polynomial growth, then for every $\e > 0$, there is a constant $C > 0$ and an infinite (deterministic) sequence of times $\{t_n\}$ such that $H_{t_n} - H_{t_{n-1}} \leq \frac{C}{t_n}$ and, \[ \Pr\left({\E}\left[d_G(X_0, X_{t_n})^2 \mid (G,\rho)\right] \leq C t_n\right) \geq 1-\e\,. \] \end{theorem}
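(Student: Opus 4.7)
The plan is to construct the deterministic sequence $\{t_n\}$ via a pigeonhole on the annealed entropy, and then convert smallness of the single-step entropy increment at $t_n$ into a diffusive bound on the annealed second moment, from which the quenched probabilistic bound follows by Markov's inequality. First I would note that, since each step of the walk has graph distance exactly $1$, the distribution of $X_n$ conditioned on $(G,\rho)$ is supported in $B^G_\rho(n)$. Hence $H_{(G,\rho)}(n) \leq \log |B^G_\rho(n)|$, and after taking expectations the weakly annealed polynomial growth assumption \eqref{eq:poly-growth} yields the a priori bound $H_n \leq s\log n + c$.

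Next I would extract the subsequence. Fix any constant $C > s$. If the set $\{n : H_n - H_{n-1} \leq C/n\}$ were finite, then for all $n \geq n_0$ we would have $H_n - H_{n-1} > C/n$, and summing the harmonic series would give $H_N \geq C \log N - O_C(1)$, contradicting the a priori entropy bound for $N$ large. Let $\{t_n\}$ enumerate this infinite set; reading $H_{t_n} - H_{t_{n-1}}$ as the one-step increment $H_{t_n} - H_{t_n - 1}$, the entropy condition in the theorem follows directly.

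The main step is converting the bound $H_{t_n} - H_{t_n - 1} \leq C/t_n$ into a diffusivity estimate. The key tool is the identity
\[
\E\!\left[I(X_1; X_n \mid (G,\rho))\right] = H_n - H_{n-1}\mper
\]
which follows from the chain rule applied to $H((X_1, X_n) \mid (G,\rho))$, combined with the stationarity identity $(G,X_1) \stackrel{\mathrm{law}}{=} (G,\rho)$ — the latter yielding $\E\!\left[H(X_n \mid X_1, (G,\rho))\right] = H_{n-1}$, after conditioning on $X_1$ and re-rooting. Pinsker's inequality then gives, on average over $(G,\rho)$, total-variation closeness between the joint law of $(X_1, X_{t_n})$ and the product of marginals; geometrically, starting from a typical neighbor of $\rho$, the walk at time $t_n - 1$ is distributed essentially the same as the walk at time $t_n$ from $\rho$. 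Iterating this along a chain of intermediate starting points, using reversibility and the polynomial volume assumption to control the effective support size, I would conclude that the walk's distribution at time $t_n$ has diameter $O(\sqrt{t_n})$, whence the annealed estimate $\E d_G(X_0, X_{t_n})^2 \leq C' t_n$. Markov's inequality applied to the nonnegative random variable $\E[d_G(X_0, X_{t_n})^2 \mid (G,\rho)]$ then produces the probability-$(1-\e)$ quenched bound with $C$ depending on $\e$.

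The main obstacle is the final link in the third step: translating smallness of mutual information (an ``independence of increments'' assertion) into smallness of displacement (a geometric assertion). Without the orthogonality of increments available in the Euclidean martingale setting, one must exploit reversibility and the polynomial growth hypothesis to relate the entropy of $X_{t_n}$ to the diameter of its effective support, most plausibly via an isoperimetric or volume-comparison argument tailored to stationary random graphs of polynomial growth.
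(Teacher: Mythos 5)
Your first two steps are fine and match one ingredient of the paper's argument: the a priori bound $H_n \leq s\log n + c$ from weakly annealed polynomial growth, followed by a harmonic-series pigeonhole to extract infinitely many times with $H_{t_n}-H_{t_n-1}\leq C/t_n$. (The paper runs this pigeonhole simultaneously with several other selections inside the functional $\Psi_{(G,\rho)}(k_0)$ in \pref{sec:SRG}, but the mechanism is the same.) The problem is your third step, which is the entire content of the theorem, and for which you give no argument — only the assertion that small mutual information between $X_1$ and $X_{t_n}$ should yield displacement of order $\sqrt{t_n}$. This implication is not true as a matter of principle: entropy (and mutual information) upper bounds displacement only in the \emph{other} direction. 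On a graph of polynomial growth one always has $H_n \leq O(\log n)$, so the increments $H_n - H_{n-1}$ sum to $O(\log n)$ and are small at the overwhelming majority of times regardless of how the walk moves; yet \pref{thm:badtimes} exhibits a stationary random graph of polynomial growth that is superdiffusive at infinitely many times. A nearly deterministic but far-travelling walk has tiny entropy increments and large displacement, so no amount of Pinsker-and-iterate can close this gap without injecting genuinely geometric information. Indeed, in the paper the entropy condition and the diffusive condition are established \emph{independently} (conditions (1) and (2) of \pref{thm:folner-version} come from separate terms of $\Psi$); the entropy bound is carried along only because it is needed later for the harmonic-function application, not because it implies diffusivity.

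What actually produces the displacement bound in the paper is a chain of tools absent from your proposal: (i) reversibility of $(G,\rho)$ (Lyons' result, from subexponential growth) and the mass transport principle, which reduce the quenched bound at the root to a bound for the stationary walk restricted to a ball $B^G_\rho(r_n)$ (\pref{lem:locals}); (ii) single-scale Euclidean embeddings whose co-Lipschitz constant at scale $8^k$ is controlled by the local volume ratio $\phi^G_x(k)$ (\pref{lem:embed}, via the random partitions of \pref{lem:partition}); (iii) the forward/backward martingale decomposition of \pref{lem:npss} plus Azuma's inequality, giving subgaussian tails for $\|F_k(Z_0)-F_k(Z_{2n})\|$ and hence \pref{lem:rwcontrol}; and (iv) a multi-scale pigeonhole locating radii at which the \emph{average} growth profile is controlled simultaneously across all $\approx \log n$ scales in $[\sqrt{n},n]$ — the step the introduction identifies as the main technical difficulty. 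If you want to salvage your outline, the place to look is not the entropy increment but the volume-ratio functionals $\theta^G_{\rho,r}(\ell)$; the entropy condition should be treated as a separate, easy add-on exactly as you derived it.
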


The proof of the preceding theorem constitutes the bulk of this article. We first show how  \pref{thm:main} follows.

\begin{proof}[Proof of \pref{thm:main}] Observe that by the chain rule for entropy and stationarity of $(G,\rho),$ it follows that for any $t \geq 1$, \begin{equation}\label{entropystationary} \E\left[H_{(G,\rho)}(1,t)-H_{(G,\rho)}(1)\right]=H_{t-1}\,. \end{equation}

There by \pref{thm:goodtimes}, \eqref{entropystationary} and Fatou's Lemma, for any $\e>0$, there is a constant $C > 0$ such that with probability $1-\e$ over the choice of $(G,\rho)$, there exists an infinite sequence of times $\{t_n\}$ (depending on $(G,\rho)$) such that,
\begin{equation}\label{eq:fatou} H_{(G,\rho)}(t_n)+H_{(G,\rho)}(1)-H_{(G,\rho)}(1,t_n) \leq \frac{C}{t_n}     \textrm{ and } \E\left[d_G(X_0,X_{t_n})^2 \mid (G,\rho)\right] \leq Ct_n\,, \end{equation} (for the first inequality notice $\E[H_{(G,\rho)}(t_n)+H_{(G,\rho)}(1)-H_{(G,\rho)}(1,t_n)]=H_{t_n}-H_{t_{n-1}}$).

\medskip

Suppose $h : V_G \to \R$ is harmonic on $G$.  The authors of \cite{benjamini} establish the inequality: For any time $t \geq 1$, \begin{equation}\label{eq:bdky} {\E} \left[ |h(X_0)-h(X_1)| \mid (G,\rho)\right] \leq \sqrt{4[H_{(G,\rho)}(1)+H_{(G,\rho)}(t)-H_{(G,\rho)}(1,t)]\cdot {\E} [h(X_t)^2 \mid (G,\rho)]}\,. \end{equation}
(This inequality is the conjunction of inequality (11) and the first inequality in the proof of Theorem 8 in \cite{benjamini}.)

If the graph $(G,\rho)$ is such that \eqref{eq:fatou} holds and $h : V_G \to \R$ has sublinear growth, then if we consider \eqref{eq:bdky} along the sequence $t=t_n$ and send $n \to \infty$, we conclude that almost surely \[ h(X_0) = h(X_1)\,. \] Now send $\e \to 0$ to conclude that almost surely on $(G,\rho)$ and the random walk $\{X_t\}$, we have $h(X_0) = h(X_1).$

By stationarity, this implies $h(X_t)=h(X_{t+1})$ almost surely for every time $t \geq 1$.  Since $G$ is almost surely connected, we conclude that $h$ must be constant. Therefore \pref{thm:goodtimes} implies that almost surely $(G,\rho)$ does not admit a non-constant harmonic function of sublinear growth. \end{proof}

\subsection{Speed via Euclidean embeddings}

Note that for discrete groups of polynomial growth, significantly stronger results than \pref{thm:goodtimes-intro} are known (giving precise estimates
on the heat kernel).  See, for instance, the work of Hebisch and Saloff-Coste \cite{HS93}.
But those estimates require detailed information about the geometry that is furnished
by Gromov's classification of such groups (in particular, they require the counting measure
to be doubling).  Clearly such methods are unavailable in our setting.

Even when one does not know that the counting measure is doubling,
polynomial growth of a graph $G$ still yields infinitely many radii $r > 0$ at which $|B_x^G(2r)| \leq C |B_x^G(r)|$
for some constant $C > 0$ depending only on the growth rate.  Indeed, locating such scales and
performing geometric arguments that depend only on the local doubling constant underlie Kleiner's remarkable proof of Gromov's theorem \cite{Kleiner10} (see also the quantitative results in \cite{ST10}).
(Somewhat related to the topic of the current paper, the heart of Kleiner's argument lies in establishing
that on any finitely generated group of polynomial growth, the space of harmonic functions
of (fixed) polynomial growth is finite-dimensional.)

We will pursue a related course, but in order to bound the speed of the random walk after $n$ steps,
we require control on the volume growth over $\approx \log n$ scales, corresponding to distances in the interval $[\sqrt{n},n]$.
Polynomial volume growth is certainly not sufficient to find $\log n$ consecutive scales at which
the growth is doubling (uniformly in $n$).  Confronting this difficulty is the major technical challenge we face.

\medskip
\paragraph{Reducing to analysis on finite subgraphs}

In order to establish \pref{thm:goodtimes}, we first invoke the mass transport principle to show that it suffices to examine the random walk restricted to finite subgraphs of $(G,\rho)$.
Let $\mu_G(S) = \sum_{x \in S} \deg_G(x)$ for all subsets $S \subseteq V_G$.

In \pref{sec:mtp}, we argue that it is enough to find an infinite sequence of times $\goodtimes$ and radii $\{r_n : n \in \goodtimes\}$ such that the following three conditions hold for some constant $C$: \begin{enumerate} \item  For every $\e > 0$ and all $n \in \goodtimes$ with $n \geq (1/\e)^4$, \begin{equation}\label{eq:cond1} \E\left[\frac{\mu_G\left(\vphantom{\bigoplus}\left\{ x \in B^G_{\rho}(r_n) : \E[d_G(X_0, X_n)^2 \mid X_0 = x, (G,\rho)] \geq (C\e^{-13}) n\right\}\right)}{\mu_G(B^G_{\rho}(r_n))}\right] \leq \e\,, \end{equation} \item  $H_n - H_{n-1} \leq \displaystyle\frac{C}{n},$ \item
$\E\left[\log \frac{\mu_G(B^G_{\rho}(2r_n))}{\mu_G(B^G_{\rho}(r_n))}\right] \leq C\,.$
\end{enumerate}
It is noteworthy that our application of the mass transport principle
uses the polynomial growth condition; specifically, we need to apply
it at a scale where $\mu_G$ is doubling (see \pref{lem:locals}).

\medskip
\paragraph{Embeddings and martingales}

Let us focus now on condition (1) since it is the difficult one to verify. In order to control the speed of the random walk started at a uniformly random point of $B_{\rho}^G(r_n)$, we construct a family of mappings $\{F_k\}$ from $B_{\rho}^G(r_n)$ into a Hilbert space and use the martingale methods of \cite{NPSS06,DLP13} to derive bounds on the speed. The following statement is a slightly weaker version of \pref{lem:rwcontrol} in \pref{sec:martingales}.

\begin{lemma}\label{lem:intro-rw} Consider a graph $G=(V_G,E_G)$, a finite subset $S \subseteq V_G$, and a family $\{F_k : S \to \cH\}_{k \in \N}$ of $1$-Lipschitz mappings into a Hilbert space. Let $\f  : \N \to \R_+$ be a given function. For $k_0 \in \N$, define the set of pairs \[ \cG(k_0, \f) = \left\{ (x,y) \in S^2 : \textrm{for all $k \geq k_0$},\ d_G(x,y) \geq 8^k \implies \|F_k(x)-F_k(y)\|_{\cH} \geq \frac{8^k}{\f(k)} \right\}\,. \] If $\{Z_t\}$ is the stationary random walk restricted to $S$ (cf. \pref{def:restricted}), then for every $n \geq 1$, \begin{align}
\E\left[d_G(Z_{2n},Z_0)^2 \cdot \1_{\cG(\alpha_n,\f)}(Z_0, Z_{2n})\right] 
&\leq 2n + 256 \sum_{k \geq \alpha_n} 8^{2k} \exp\left(\frac{-8^{2k}}{32 n \f(k)^2}\right)\,,\label{eq:intro-rw} \end{align} where $\alpha_n = \lceil\log_8(\sqrt{2n})\rceil$. \end{lemma}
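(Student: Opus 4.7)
The plan is to follow the martingale method of Naor--Peres--Schramm--Sheffield \cite{NPSS06}, as developed in \cite{DLP13}. The central probabilistic input is a Hilbert-space concentration inequality for $1$-Lipschitz images of the reversible stationary walk: for every $1$-Lipschitz map $F : S \to \cH$ and every $n \geq 1$,
\[ \Pr\bigl(\|F(Z_{2n}) - F(Z_0)\|_{\cH} \geq t\bigr) \leq A \exp\Paren{-\frac{Bt^2}{n}}, \]
for absolute constants $A,B>0$. One proves this by decomposing $F(Z_{2n}) - F(Z_0)$ as the sum of a forward martingale, adapted to the filtration $\sigma(Z_0,\ldots,Z_t)$, and a backward martingale, adapted to the reversed filtration $\sigma(Z_{2n-t},\ldots,Z_{2n})$; the stationarity and reversibility of the restricted walk (cf.\ \pref{def:restricted}) make both filtrations legitimate. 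Each martingale has Hilbert-space increments of norm at most $2$, so Azuma--Hoeffding for Hilbert-space-valued martingales (applicable since Hilbert spaces have type $2$) delivers the tail bound.

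Next I convert the tail bound into the claimed moment estimate. By the definition of $\cG(\alpha_n,\f)$, whenever $(Z_0, Z_{2n}) \in \cG(\alpha_n, \f)$ and $d_G(Z_0, Z_{2n}) \geq 8^k$ for some $k \geq \alpha_n$, one automatically has $\|F_k(Z_{2n}) - F_k(Z_0)\|_{\cH} \geq 8^k/\f(k)$. Applying the concentration inequality to $F_k$ at $t = 8^k/\f(k)$ yields
\[ \Pr\bigl(d_G(Z_0, Z_{2n}) \geq 8^k, \ (Z_0, Z_{2n}) \in \cG(\alpha_n, \f)\bigr) \leq A \exp\Paren{-\frac{B \cdot 8^{2k}}{n\f(k)^2}}. \]
Writing $\E\bigl[d_G(Z_0,Z_{2n})^2 \1_{\cG(\alpha_n,\f)}\bigr] = \int_0^\infty 2r \Pr\bigl(d_G(Z_0,Z_{2n}) \geq r, \cG(\alpha_n,\f)\bigr)\,dr$ and splitting at $r=\sqrt{2n}$, below the threshold the trivial bound $\Pr(\cdot)\leq 1$ contributes exactly $2n$. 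Above the threshold, decompose dyadically into annuli $[8^k, 8^{k+1})$ for $k \geq \alpha_n$, each contributing at most $8^{2(k+1)}$ times the tail probability at scale $k$. Summing the resulting geometric series and absorbing multiplicative factors into the constants $256$ and $32$ gives \eqref{eq:intro-rw}.

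The main obstacle is establishing the Hilbert-space concentration inequality for the restricted stationary walk with the correct Gaussian-like scaling $\exp(-\Omega(t^2/n))$. A naive forward Doob decomposition only produces a martingale for $F(Z_t)$ minus its conditional expectation, leaving an a priori unbounded drift term unaccounted for. Exploiting reversibility to construct the backward martingale, and thereby writing the full Lipschitz increment $F(Z_{2n})-F(Z_0)$ as the difference of two martingales with Hilbert-space increments bounded by $2$, is the technical heart of the argument. Once that is in hand, the dyadic summation is routine, and the bookkeeping of the three constants ($2n$ from the sub-diffusive contribution, $256$ from the dyadic geometric factor $\sum_{k \geq \alpha_n} 8^{2(k+1)}$, and $32$ from the explicit form of Azuma--Hoeffding) is straightforward.
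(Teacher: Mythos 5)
Your proposal is correct and follows essentially the same route as the paper: the forward/backward martingale decomposition of Naor--Peres--Schramm--Sheffield with increments bounded by $2$, Azuma's inequality for Hilbert-space-valued martingales to get the subgaussian tail for $\|F_k(Z_{2n})-F_k(Z_0)\|_{\cH}$, and then the dyadic summation over scales $k\geq\alpha_n$ using the definition of $\cG(\alpha_n,\f)$. The only caveat, which the paper's own write-up shares, is that the contribution from the region $d_G(Z_0,Z_{2n})<8^{\alpha_n}$ is really $O(n)$ rather than exactly $2n$, since $8^{\alpha_n}$ may exceed $\sqrt{2n}$ by a factor of $8$; this is immaterial for every application of the lemma.
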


In \pref{sec:embed}, we show how standard tools from metric embedding theory \cite{CKR01,KLMN05} provide a family of maps which are co-Lipschitz at a fixed scale, assuming the growth rate of balls at that scale is small.

\begin{lemma}[Statement of \pref{lem:embed}] \label{lem:intro-embed} For any graph $G=(V_G,E_G)$ and any $k \geq 1$, there is a $1$-Lipschitz map $F_k : V_G \to \ell_2$ such that for all $x,y \in V_G$, it holds that \[ d_G(x,y) \geq 8^k \implies \|F_k(x)-F_k(y)\|_2 \geq \frac{8^k}{128 \left(1+\log \frac{|B^G_x(8^k)|}{|B^G_x(8^{k-1})|}\right)} \] \end{lemma}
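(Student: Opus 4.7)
The plan is to use the random partition technology of Calinescu--Karloff--Rabani \cite{CKR01}, refined so that the padding parameter depends on local volume growth as in Krauthgamer--Lee--Mendel--Naor \cite{KLMN05}. Set $\tau = 8^k$. The first step is to produce a distribution $\mu$ over partitions $P$ of $V_G$ with each cluster of diameter at most $\tau$ that satisfies the padding guarantee
\[ \Pr_{P \sim \mu}\!\left[B^G_x(\tau/\beta_x) \subseteq P(x)\right] \geq \tfrac12, \qquad \beta_x \leq C_0\!\left(1 + \log \tfrac{|B^G_x(8^k)|}{|B^G_x(8^{k-1})|}\right), \]
for every $x \in V_G$, where $P(x)$ is the cluster containing $x$ and $C_0$ is an absolute constant. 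This is the standard CKR construction (choose a random cluster radius $\Delta \in [\tau/2, \tau]$ and a uniformly random linear order on $V_G$, then greedily peel off balls of radius $\Delta$): bounding the probability that $B^G_x(\delta)$ meets the boundary of its cluster by the usual annular-volume argument yields a cut probability of order $(\delta/\tau) \log(|B^G_x(\tau)|/|B^G_x(\tau/8)|)$, and taking $\delta = \tau/\beta_x$ with $C_0$ large enough drives this below $1/2$.

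The second step is to turn the padded partition into a Hilbert-space embedding. Given a realization of $P$, draw independent uniform $\pm 1$ signs $\{\sigma_C\}$ indexed by the clusters of $P$, and set
\[ g_{P,\sigma}(x) \;=\; \sigma_{P(x)} \cdot d_G(x, V_G \setminus P(x)). \]
Define $F_k(x)$ to be the vector in the separable Hilbert space $L^2(\mu \otimes \text{signs}) \cong \ell_2$ whose value at $(P,\sigma)$ is $\tfrac12 g_{P,\sigma}(x)$.

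The third step is the verification. For the Lipschitz bound, if $P(x) = P(y)$ then $g_{P,\sigma}(x) - g_{P,\sigma}(y) = \sigma_{P(x)}\bigl(f_P(x) - f_P(y)\bigr)$ where $f_P(\cdot) = d_G(\cdot, V_G \setminus P(x))$ is $1$-Lipschitz, so $|g_{P,\sigma}(x) - g_{P,\sigma}(y)| \leq d_G(x,y)$; if $P(x) \neq P(y)$, then $y \notin P(x)$ and $x \notin P(y)$ force $d_G(x, V_G \setminus P(x)) \leq d_G(x,y)$ and likewise for $y$, yielding $|g_{P,\sigma}(x) - g_{P,\sigma}(y)| \leq 2 d_G(x,y)$. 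Squaring and integrating gives $\|F_k(x) - F_k(y)\|_2^2 \leq d_G(x,y)^2$. For the co-Lipschitz bound, suppose $d_G(x,y) \geq \tau$. Since clusters have diameter at most $\tau$, $P(x) \neq P(y)$, so $\sigma_{P(x)}$ and $\sigma_{P(y)}$ are independent; conditioning on $P$ and averaging over signs,
\[ \E\!\left[\bigl(g_{P,\sigma}(x) - g_{P,\sigma}(y)\bigr)^2 \mid P\right] \;=\; d_G(x, V_G \setminus P(x))^2 + d_G(y, V_G \setminus P(y))^2. \]
Restricting to the padding event (probability at least $1/2$) forces $d_G(x, V_G \setminus P(x)) \geq \tau/\beta_x$, giving
\[ \|F_k(x) - F_k(y)\|_2^2 \;\geq\; \tfrac14 \cdot \tfrac12 \cdot (\tau/\beta_x)^2 \;=\; \frac{\tau^2}{8 \beta_x^2}, \]
i.e., $\|F_k(x) - F_k(y)\|_2 \geq \tau/(2\sqrt{2}\, \beta_x)$.

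The main technical nuisance is arithmetic: matching the specific constant $128$ in the denominator requires careful bookkeeping of the padding constant $C_0$ produced by the CKR/KLMN analysis, together with the normalization factor $1/2$ used to make $F_k$ exactly $1$-Lipschitz. The quantitative form of the padding bound in \cite{KLMN05} readily affords $C_0 \leq 128/(2\sqrt{2}) \approx 45$, so the claimed constant will follow without any new ingredients beyond honest constant-chasing.
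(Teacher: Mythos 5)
Your proposal is correct and follows essentially the same route as the paper: a CKR-style random ball-partition whose padding probability is controlled by the local volume ratio (the paper's Lemma~\ref{lem:partition}, with radius drawn from $[\tau/4,\tau/2)$ so that cluster diameters are strictly less than $\tau$ — note your choice $\Delta\in[\tau/2,\tau]$ would allow diameter up to $2\tau$ and needs to be recalibrated), followed by a randomized distance-to-complement embedding and the same Lipschitz/co-Lipschitz verification (the paper uses Bernoulli $\{0,1\}$ coefficients in place of your $\pm1$ signs, which avoids the factor-of-$2$ normalization but is otherwise identical). The constant bookkeeping works out as you anticipate.
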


It may help to consider now the following special case:  Suppose that the counting measure on $G$ is doubling, i.e. \[ \sup_{x \in V_G, r \geq 0} \frac{|B_x^G(2r)|}{|B_x^G(r)|} \leq O(1)\,. \] In that case, if we use the family $\{F_k\}$ from \pref{lem:intro-embed}, then there is some uniformly bounded function $\f : \N \to \R_+$  in \pref{lem:intro-rw} such that $\1_{\cG(k_0, \f)} \equiv \1$ for all $k_0 \geq 1$. Evaluating the sum in \eqref{eq:intro-rw} immediately yields $\E[d_G(Z_{2n},Z_0)^2] \leq O(n)$, completing our verification of \eqref{eq:cond1}. (Strictly speaking, the stationary measure on $S$ and the measure $\mu_G$ restricted to $S$ are different, but they can be made arbitrarily close by taking $S=B^G_{\rho}(r_n)$ where $r_n$ is chosen so that $S$ is a sufficiently good F{\o}lner set.)

\medskip

In general, polynomial growth does not imply that the counting measure is doubling (and certainly the annealed form introduces even more complexity). Still, using \pref{lem:intro-rw} and \pref{lem:intro-embed} in conjunction, in \pref{sec:choosing} we show that \eqref{eq:cond1} holds at time $n$ (for some radius $r_n \gg n$) if the average profile of growth rates of balls $\{B^G_x(r) \subseteq B_{\rho}^G(r_n)\}$ is sufficiently well-behaved for $r \in [\sqrt{n}, n]$.

Finally, in \pref{sec:SRG}, we argue that the annealed growth condition \eqref{eq:poly-growth} allows us to find an infinite sequence of radii at which the average growth profile is well-behaved (with high probability over the choice of $(G,\rho)$). This is subtle, as we require control on the growth for $\approx \log n$ scales (corresponding to $r \in [\sqrt{n},n]$).\footnote{The Varopoulos-Carne bound suggests we only need control for $\log \log n$ scales corresponding to $r \in [\sqrt{n},\sqrt{n \log n}]$, but the same problem arises.} As mentioned before, one cannot hope to find such a sequence of consecutive scales at which the volume growth is uniformly doubling. Fortunately, the subgaussian tail in \eqref{eq:intro-rw} gives us some flexibility; it will suffice to find a sequence of consecutive scales where the volume growth is not increasing too fast. Once this is established, we can verify \eqref{eq:cond1} along this sequence and confirm \pref{thm:goodtimes}.

\subsection{A deterministic example:  Planar graphs}
\label{sec:planar}

In this section, we present a solution to a question of Benjamini about random walks on planar graphs. It illustrates some of the ideas our main argument and their origins (in K. Ball's notion of Markov type), as well as the reduction of speed questions to the setting of stationary Markov chains on finite subgraphs.

\medskip

Consider again a graph $G=(V_G,E_G)$. For a finite subset $S \subseteq V_G$, define the {\em edge boundary} \[ \partial_E S = \left\{ \vphantom{\bigoplus} \{u,v\} \in E_G : \1_S(u) \neq \1_S(v)\right\}\,, \] and the {\em edge expansion of $S$} for $S \neq \emptyset$: \[ \phi_G(S) = \frac{|\partial_E S|}{\mu_G(S)}\,. \] Say that $G$ is {\em amenable} if $\inf \{ \phi_G(S) :  \textrm{finite } S \subseteq V_G, S \neq \emptyset\} = 0$. Otherwise, say that $G$ is {\em non-amenable.}

Let $\{X_t\}$ denote simple random walk on $G$.  We say that the walk is {\em ballistic} if there is a constant $c > 0$ such that for all $v \in V_G$, \[ \E\left[d_G(X_0,X_t)^2 \mid X_0=v\right] \geq ct^2 \] for all $t \geq 0$. Say that the walk is {\em always somewhere at most diffusive} if there is a constant $c > 0$ such that for all $t \geq 0$, \[ \inf_{v \in V_G} \E\left[d_G(X_0,X_t)^2 \mid X_0=v\right] \leq ct\,. \]

The following result was conjectured by Itai Benjamini.\footnote{It was made by Benjamini at the Erd\"os Centennial in Budapest, July, 2013} It states that for planar graphs, there are no intermediate (uniform) speeds between $\sqrt{t}$ and $t$.

\begin{theorem}\label{thm:itai} Suppose that $G$ is an infinite planar graph with uniformly bounded vertex degrees. Either $G$ is amenable and the random walk is always somewhere at most diffusive, or $G$ is non-amenable and the random walk is ballistic. \end{theorem}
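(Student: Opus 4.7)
The argument splits along the amenable/non-amenable dichotomy. If $G$ is bounded-degree and non-amenable, Kesten's criterion gives spectral radius $\rho = \rho(G) < 1$, hence $p_{2t}(v,v) \leq O(\rho^{2t})$ for every $v$. Reversibility and Cauchy-Schwarz yield $\Pr[X_t \in S \mid X_0 = v]^2 \leq O(|S| \cdot p_{2t}(v, v))$ for every finite $S$, so applying this with $S = B_v^G(ct)$ and using the degree bound $|B_v^G(r)| \leq d^r$ gives
\[
\Pr\bigl[d_G(X_0, X_t) \leq ct \bigm| X_0 = v\bigr] \leq O\!\left(d^{ct/2} \rho^t\right).
\]
Choosing $c > 0$ small enough that $\rho \cdot d^{c/2} < 1$ makes this bound exponentially small in $t$ uniformly in $v$, and then $\E[d_G(X_0, X_t)^2 \mid X_0 = v] = \Omega(t^2)$ follows (the small-$t$ regime is absorbed by adjusting constants), giving the ballistic conclusion.

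For the amenable case, the plan is to mirror the main argument of the paper while replacing the growth-dependent embedding of \pref{lem:intro-embed} with a scale-uniform planar embedding. Fix $t$, and by amenability pick a F{\o}lner set $S \subseteq V_G$ with $|\partial_E S|/\mu_G(S) \leq \delta$, for $\delta$ to be chosen below. Let $\{Z_s\}$ denote the stationary random walk restricted to $S$ (\pref{def:restricted}), with $Z_0$ drawn from the $\mu_G$-proportional distribution on $S$. The Klein--Plotkin--Rao theorem, which gives any planar graph at each scale $r$ a random partition into clusters of diameter $\leq r$ padding each ball $B_x^G(r/C_0)$ inside a single cluster with probability $\geq 1/2$, combined with the standard Rao-type construction, produces for each $k \geq 0$ a $1$-Lipschitz map $F_k : V_G \to \ell_2$ satisfying
\[
d_G(x, y) \geq 8^k \ \implies\ \|F_k(x) - F_k(y)\|_2 \geq \frac{8^k}{C_1}
\]
for an absolute constant $C_1$ (depending only on the planarity data and the degree bound). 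In the notation of \pref{lem:intro-rw} we may then take $\f(k) \equiv C_1$, so $\cG(k_0, \f) = V_G \times V_G$ and the indicator in that lemma is identically $1$. Feeding this into \pref{lem:intro-rw} with $n = t$ gives
\[
\E\bigl[d_G(Z_0, Z_{2t})^2\bigr] \leq 2t + 256 \sum_{k \geq \alpha_t} 8^{2k} \exp\!\left(-\frac{8^{2k}}{32\, C_1^2\, t}\right) \leq C t,
\]
because $m \mapsto m e^{-m/A}$ with $A = 32 C_1^2 t$ peaks at $A/e = O(t)$ and decays superexponentially beyond, so the geometric sum is $O(t)$. Markov's inequality then yields some $v \in S$ with $\E[d_G(Z_0, Z_{2t})^2 \mid Z_0 = v] \leq 2Ct$; a union bound gives $\Pr[\{Z_s\}_{s \leq t} \neq \{X_s\}_{s \leq t} \mid Z_0 = v] \leq O(t\delta)$ for any start in $S$, so choosing $\delta = t^{-3}$ and using the trivial bound $d_G(X_0, X_t) \leq t$ on the bad event transfers the estimate to $\E[d_G(X_0, X_t)^2 \mid X_0 = v] \leq O(t)$ on $G$ itself, which is the ``somewhere at most diffusive'' conclusion.

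The principal obstacle is the embedding step: \pref{lem:intro-embed} degrades whenever local volume growth is large, and nothing about amenable planar graphs a priori rules this out. What makes the argument work is the Klein--Plotkin--Rao padded decomposition theorem for excluded-minor graphs, which supplies the constant $C_1$ above independently of local growth. With that input in hand, the Markov-type/martingale estimate of \pref{lem:intro-rw} and the F{\o}lner-based reduction to stationary dynamics on a finite subset conclude the proof in the same pattern as the main theorem, and together with the spectral-radius computation above they establish the claimed dichotomy.
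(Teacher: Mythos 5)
Your proposal is correct and follows the same skeleton as the paper's proof: the non-amenable half is the standard spectral-radius argument (the paper simply cites Kesten and \cite[Prop.\ 8.2]{Woess00}; your Cauchy--Schwarz computation with $\Pr[X_t\in S\mid X_0=v]^2\le O(|S|\,p_{2t}(v,v))$ is the usual way to carry it out), and the amenable half reduces to the stationary walk on a F{\o}lner set exactly as in \pref{lem:amenablemt}. The one substantive difference is the key input for the amenable case: the paper invokes the theorem of \cite{DLP13} that planar graphs have Markov type $2$ with a universal constant, whereas you build a scale-uniform threshold embedding directly from the Klein--Plotkin--Rao padded decomposition and feed it into \pref{lem:intro-rw}. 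This is precisely the shortcut the paper itself advertises in the remark after \pref{cor:threshold} (``bounding $\edge_2(G)$ \ldots is somewhat easier than bounding $M_2(G)$''): a KPR partition at scale $\tau$ with constant padding plugs into \pref{lem:embed1} with $\e(x)$ a universal constant, yielding a threshold embedding with $O(1)$ distortion and hence $\edge_2(G)\le O(1)$, with no need for the degree bound at this step. What your route buys is self-containedness (you avoid the full strength of the planar Markov type theorem); what the paper's citation buys is brevity. One small ordering issue in your write-up: you first extract a good starting vertex $v$ for the restricted walk via Markov's inequality and then try to bound the coupling-failure probability \emph{for that fixed $v$} by $O(t\delta)$ --- but the bound $\Pr[\{X_0,\dots,X_t\}\nsubseteq S]\le t\phi_G(S)$ only holds for the \emph{stationary} start. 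You should instead bound the combined quantity $\E[d_G(Z_0,Z_{2t})^2]+t^2\Pr[\text{escape}]$ in expectation over the stationary start (as the paper does in \eqref{eq:amenbound}) and only then pass to a single good vertex; this is a trivial fix and does not affect the validity of the argument.
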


Benjamini suggested this as an analog to the following dichotomy:  Every amenable planar $G$ graph admits arbitrarily large sets $S \subseteq V_G$ such that $|\partial_V S| \leq O(\sqrt{|S|})$, where $\partial_V S = \{ v \in V_G : d_G(v,S)=1 \}$. (This fact was announced by Gromov; see \cite{bowditch95} for a short proof.) Of course, in the non-amenable case, one has a linear isoperimetric profile:  $|\partial_V S| \geq c|S|$ for some $c > 0$ and every $S \subseteq V_G$. Note that \pref{thm:itai} is straightforward in the non-amenable case: If a graph $G$ is non-amenable, then $G$ has spectral radius $\rho < 1$ \cite{Kesten59}, hence the random walk is ballistic (see, e.g. \cite[Prop. 8.2]{Woess00}).

\begin{remark} If one removes the assumption of bounded degrees from \pref{thm:itai}, then for $G$ amenable, it still holds that the random walk is always somewhere at most diffusive (the argument below does not assume any bound on the vertex degrees). But there are non-amenable planar graphs for which the random walk does not have positive speed.  We refer to \cite[Ex 6.56]{LP:book} for a description of the unpublished construction of Angel, Hutchcroft, Nachmias, and Ray.
\end{remark}

For the amenable case, we recall K. Ball's notion of Markov type \cite{Ball92}.

\begin{definition}[Markov type] A metric space $(X,d)$ is said to have {\em Markov type $p \in [1,\infty)$} if there is a constant $M > 0$ such that for every $n \in \mathbb N$, the following holds. For every reversible Markov chain $\{Z_t\}_{t=0}^{\infty}$ on $\{1,\ldots,n\}$, every mapping $f : \{1,\ldots,n\} \to X$, and every time $t \in \mathbb N$, \begin{equation}\label{eq:mtype} \E \left[d(f(Z_t), f(Z_0))^p\right] \leq M^p t \, \E \left[d(f(Z_0), f(Z_1))^p\right]\,, \end{equation} where $Z_0$ is distributed according to the stationary measure of the chain. One denotes by $M_p(X,d)$ the infimal constant $M$ such that the inequality holds. \end{definition}

\begin{definition}[Restricted random walk] \label{def:restricted} Consider a graph $G=(V_G,E_G)$, and let \[N(x) = \{ y \in V_G : \{x,y\} \in E_G \}\] denote the neighborhood of a vertex $x \in V_G$. Fix a finite subset $S \subseteq V_G$. Denote the measure $\pi$ on $S$ by $\pi(x) = \deg_G(x)/\mu_G(S)$. We define {\em the random walk restricted to $S$} as the following process $\{Z_t\}$: For $t \geq 0$, put \[ \Pr(Z_{t+1} = y \mid Z_t = x) = \begin{cases} \frac{|N(x) \setminus S|}{\deg_G(x)} & y = x \\ \frac{1}{\deg_G(x)} & y \in N(x) \cap S \\ 0 & \textrm{otherwise.} \end{cases} \] It is straightforward to check that $\{Z_t\}$ is a reversible Markov chain on $S$ with stationary measure $\pi$. If $Z_0$ has law $\pi$, we say that $\{Z_t\}$ is the {\em stationary random walk restricted to $S$.} \end{definition}

\begin{definition}[Graphic Markov type] Define the {\em graphic Markov type $p$ constant $\edge_p(G)$} of a graph $G=(V_G,E_G)$ as the infimal number $M$ such that for every finite subset $S \subseteq V_G$ and $t\in \N$, \[ \E\left[d_G\left(Z^S_0, Z^S_t\right)^p\right] \leq M^p t\,, \] where $\{Z^S_t\}$ is the stationary random walk restricted to $S$. \end{definition}

\begin{lemma}\label{lem:amenablemt} If $G$ is an amenable graph, then for every time $t \geq 0$, \[ \inf_{v \in V_G} \mathbb E \left[ d_G(X_0,X_t)^2 \mid X_0=v\right] \leq 2\,\left(\edge_2(G)\right)^2 t \leq 2\,M_2(G)^2 t\,. \] \end{lemma}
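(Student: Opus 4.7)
The plan is to prove the two inequalities separately, starting with the easier second inequality $\edge_2(G) \leq M_2(G)$, and then using amenability to bound $\inf_v \E[d_G(X_0,X_t)^2 \mid X_0 = v]$ in terms of the behavior of the stationary restricted random walk.

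For the second inequality, I would apply the Markov type 2 condition with the chain $\{Z_t^S\}$ restricted to a finite subset $S \subseteq V_G$ and the inclusion mapping $f : S \hookrightarrow V_G$. The key observation is that at each step the restricted walk either stays put or moves to a graph neighbor, so $d_G(Z_0^S, Z_1^S) \in \{0,1\}$ and hence $\E[d_G(f(Z_0^S),f(Z_1^S))^2] \leq 1$. Combining with \eqref{eq:mtype} gives $\E[d_G(Z_0^S, Z_t^S)^2] \leq M_2(G)^2 t$ for every finite $S$, so $\edge_2(G) \leq M_2(G)$ by definition.

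For the main inequality, fix $t \geq 0$ and $\delta > 0$, and use amenability to pick a finite nonempty $S \subseteq V_G$ with $\phi_G(S) \leq \delta$. Let $\pi = \pi_S$ denote the stationary measure of the restricted walk and couple $\{Z_s\}$ (the stationary restricted walk on $S$) with the original random walk $\{X_s\}$ by starting them at the same vertex $X_0 = Z_0 \sim \pi$ and, at each step, using the same uniformly random neighbor of the current position, agreeing until the first time $\tau$ that $X$ exits $S$. Because $\pi$ is stationary for $Z$, the marginal distribution of $Z_{s-1}$ is $\pi$ for every $s \geq 1$, so the probability that step $s$ of the coupled process crosses $\partial_E S$ equals exactly $\phi_G(S)$. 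A union bound then gives $\Pr(\tau \leq t) \leq t\,\phi_G(S) \leq t\delta$. On the event $\{\tau > t\}$ we have $X_s = Z_s$ for all $s \leq t$, while deterministically $d_G(X_0, X_t) \leq t$. Therefore
\begin{align*}
\E_{v \sim \pi}\!\E\bigl[d_G(X_0,X_t)^2 \mid X_0 = v\bigr]
&= \E\bigl[d_G(X_0,X_t)^2\,\mathbf{1}_{\tau > t}\bigr] + \E\bigl[d_G(X_0,X_t)^2\,\mathbf{1}_{\tau \leq t}\bigr] \\
&\leq \E\bigl[d_G(Z_0,Z_t)^2\bigr] + t^2 \cdot \Pr(\tau \leq t) \\
&\leq \edge_2(G)^2\,t + t^3\delta.
\end{align*}
Since the left-hand side is an average over $v \in S$, there must exist $v^\star \in S$ achieving at most the average. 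Letting $\delta \to 0$ (possible by amenability, since $t$ is fixed) yields $\inf_{v \in V_G} \E[d_G(X_0,X_t)^2 \mid X_0 = v] \leq \edge_2(G)^2 t$, which is actually stronger than the stated bound; in any case one obtains $\inf_v \E[d_G(X_0,X_t)^2 \mid X_0 = v] \leq 2\edge_2(G)^2 t$ immediately, and combining with the first step finishes the proof.

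The main technical point to be careful about is the union bound estimate $\Pr(\tau \leq t) \leq t\phi_G(S)$: although the event $\{\tau \leq t\}$ involves conditioning on not having yet left, the trick is to bound the unconditional probability that step $s$ \emph{would} cross the boundary (using the fact that $Z_{s-1} \sim \pi$ marginally), and observe that $\{\tau \leq t\}$ is contained in the union of these events. Everything else is routine once the Følner set $S$ is in place.
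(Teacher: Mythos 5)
Your proposal is correct and follows essentially the same route as the paper: restrict the walk to a F{\o}lner set $S$, apply the graphic Markov type bound to the stationary restricted walk, couple it with the true walk until the first boundary crossing, and bound the decoupling probability by $t\,\phi_G(S)$ via stationarity. The only cosmetic difference is that you send $\phi_G(S)\to 0$ for fixed $t$ (yielding the bound without the factor $2$), whereas the paper simply picks $S$ with $\phi_G(S)\leq (\edge_2(G)/t)^2$; both give the stated inequality.
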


We will prove this lemma momentarily.  Let us observe first that \pref{thm:itai} follows immediately in conjunction with the next theorem.

\begin{theorem}[\cite{DLP13}] There is a constant $K > 0$ such that $M_2(G) \leq K$ for any planar graph $G$. \end{theorem}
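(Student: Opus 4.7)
The approach is to combine the special random-partition structure of planar graphs with Ball's classical Markov type~$2$ inequality for Hilbert space, via a scale-indexed family of Lipschitz embeddings into $\ell_2$. At its heart, this parallels the strategy behind \pref{lem:intro-rw} and \pref{lem:intro-embed} in the present paper, but with the volume-growth-dependent co-Lipschitz constant replaced by a \emph{universal} constant, supplied by the topology of planar graphs.

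First, I would invoke the Klein--Plotkin--Rao (KPR) padded decomposition theorem: since every planar $G$ excludes $K_5$ (and $K_{3,3}$) as a minor, for every scale $\Delta > 0$ there is a probability distribution over partitions of $V_G$ into clusters of diameter at most $\Delta$ such that, for every $v \in V_G$, the ball $B^G_v(\Delta/\alpha_0)$ is contained in a single cluster with probability at least~$\tfrac12$, where $\alpha_0 = O(1)$ depends only on the excluded minor and not on $G$. Then, via the CKR/KLMN distance-to-boundary construction, I would convert these decompositions into a family of $1$-Lipschitz maps $F_k : V_G \to \ell_2$, indexed by scale $k \in \N$, satisfying $\|F_k(x) - F_k(y)\|_2 \ge c \cdot d_G(x,y)$ whenever $d_G(x,y) \le 8^k/\alpha_0$, with $c = \Omega(1)$ universal; moreover, because $F_k$ is built from distances to cluster boundaries of diameter $\leq 8^k$, it satisfies the truncation property $\|F_k(x) - F_k(y)\|_2 \leq O(8^k)$ for all $x,y$.

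Next, I would apply Ball's Markov type~$2$ inequality for Hilbert space to each $F_k \circ f$. Given any reversible stationary chain $\{Z_t\}$ on $\{1,\ldots,n\}$ and any $f : \{1,\ldots,n\} \to V_G$, this yields
\[ \E \bigl[\|F_k(f(Z_t)) - F_k(f(Z_0))\|_2^2\bigr] \le 2 t\,\E \bigl[\|F_k(f(Z_0)) - F_k(f(Z_1))\|_2^2\bigr] \le 2 t\,\E\bigl[d_G(f(Z_0), f(Z_1))^2\bigr], \]
using that each $F_k$ is $1$-Lipschitz. To pass from these scale-by-scale estimates to a single bound of the form $\E[d_G(f(Z_t), f(Z_0))^2] \leq K^2\,t\,\E[d_G(f(Z_0), f(Z_1))^2]$, I would dyadically partition the displacement over the ranges $d_G \in [8^k, 8^{k+1})$, using the co-Lipschitz property of $F_k$ at scale $k$ to control each range, together with Markov's inequality to bound the probability of large displacements.

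The main obstacle is precisely the step from scale-wise bounds to a global Markov type~$2$ bound without incurring a logarithmic factor: a naïve union bound over the $O(\log t)$ relevant scales gives only $M_2(G) = O(\sqrt{\log t})$, not a constant. The resolution is to assemble $\{F_k\}$ into a single weighted embedding $F = \bigoplus_k \alpha_k F_k$ into $\ell_2(\ell_2)$ with carefully tuned coefficients $\{\alpha_k\}$, and apply Ball's inequality to $F$ itself. The truncation $\|F_k(x)-F_k(y)\|_2 \leq O(8^k)$ ensures that each pair $(x,y)$ is effectively seen by $F$ at a single dominant scale, so the cross-scale contributions telescope into an $O(1)$ rather than summing into a logarithm. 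Verifying that coefficients $\{\alpha_k\}$ can be chosen uniformly across all planar graphs---so that the resulting $K$ is indeed universal---is ultimately a consequence of the uniformity of the KPR padding parameter $\alpha_0 = O(1)$ over the class of planar graphs.
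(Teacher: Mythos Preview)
The paper does not prove this theorem; it is quoted from \cite{DLP13}. Your high-level plan---KPR padded decompositions for planar graphs, converted into scale-indexed $1$-Lipschitz maps into $\ell_2$---matches that source, and you correctly isolate the crux: a na\"ive union bound over $O(\log t)$ scales loses a logarithm. But your proposed resolution has a genuine gap. Assembling the $F_k$ into a single weighted embedding $F = \bigoplus_k \alpha_k F_k$ and applying Ball's second-moment inequality to $F$ would, if it yielded $M_2(G)=O(1)$, force $F$ to be bi-Lipschitz with universal distortion---i.e., a constant-distortion embedding of every planar graph into $\ell_2$. This is impossible: diamond graphs and Laakso graphs are planar and require $\ell_2$-distortion $\Omega(\sqrt{\log n})$. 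No choice of weights $\{\alpha_k\}$ rescues this; the truncation only caps each coordinate, it does not manufacture a global lower bound $\|F(x)-F(y)\| \gtrsim d_G(x,y)$. (Relatedly, KPR plus the CKR construction gives a \emph{threshold} embedding in the sense of \eqref{eq:threshold}---separation of pairs at distance $\geq 8^k$---not the small-scale co-Lipschitz property you state.)

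The actual mechanism in \cite{DLP13} is the one the paper reviews in \pref{lem:npss} and \pref{cor:tail}: the Naor--Peres--Schramm--Sheffield martingale decomposition together with Azuma's inequality gives, for each $k$, a sub-Gaussian tail $\Pr\bigl[\|F_k(Z_t)-F_k(Z_0)\| \geq \lambda\bigr] \leq 4\exp\bigl(-\lambda^2/(32t)\bigr)$. Combined with the threshold property, this yields $\Pr\bigl[d_G(Z_t,Z_0) \geq 8^k\bigr] \leq 4\exp\bigl(-8^{2k}/(32D^2 t)\bigr)$, and it is the \emph{exponential} tail---not a second-moment bound---that makes $\sum_k 8^{2k}\,\Pr\bigl[d_G(Z_t,Z_0) \geq 8^k\bigr]$ converge to $O(D^2 t)$ without a logarithmic factor; this is exactly the computation in \pref{lem:rwcontrol} with $\f \equiv D$. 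In short, the missing idea is to replace Ball's inequality by the sharper martingale tail bound applied at each scale.
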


We remark that bounding $\edge_2(G)$ (which is all that is needed to apply \pref{lem:amenablemt}) is somewhat easier than bounding $M_2(G)$; see \pref{cor:threshold} and the remarks thereafter.

\begin{proof}[Proof of \pref{lem:amenablemt}] Fix a subset $S \subseteq V_G$. Let $\{Z_t\}$ denote the stationary random walk restricted to $S$. From the definition of graphic Markov type, for every $t \geq 0$, we have \begin{equation}\label{eq:MT2} \E[d_G(Z_0,Z_t)^2] \leq \left(\edge_2(G)\right)^2 t \E[d_G(Z_0,Z_1)^2] \leq \left(\edge_2(G)\right)^2 t\,. \end{equation}

Note that since $Z_t$ is stationary, it holds that for all $t \geq 0$, we have $\Pr(Z_{t+1}=Z_t) = \phi_G(S)$. Recall that $\{X_t\}$ is the random walk on $G$. If $X_0$ has the law of $Z_0$, then $X_t$ has the law of $Z_t$ conditioned on the event $\{X_0, X_1, \ldots, X_t\} \subseteq S$.

In particular, we can conclude that \begin{equation}\label{eq:leave} \Pr[\{X_0, X_1, \ldots, X_t\} \nsubseteq S \mid X_0 = Z_0] \leq \sum_{t=0}^{t-1} \Pr(Z_{t+1} = Z_t) \leq t \phi_G(S)\,. \end{equation} Hence, \begin{align} \E[d_G(X_t, X_0)^2 \mid X_0 = Z_0] &\leq \E[d_G(Z_t,Z_0)^2] +  \nonumber
 \Pr[\{X_0, X_1, \ldots, X_t\} \nsubseteq S \mid X_0 = Z_0] \cdot t^2
\\ & \leq \left(\edge_2(G)\right)^2 t + t^3 \phi_G(S)\,, \label{eq:amenbound} \end{align} where in the first line we have used the fact that $d_G(X_t,X_0)^2 \leq t^2$ holds with probability one, and in the second line we have employed the bounds \eqref{eq:MT2} and \eqref{eq:leave}.

Now fix a time $t \geq 0$.  Since $G$ is amenable, there exists a choice of $S$ for which $\phi_G(S) \leq (\edge_2(G)/t)^2$. In this case, from \eqref{eq:amenbound} we obtain \[ \E[d_G(X_t, X_0)^2 \mid X_0 = Z_0] \leq 2\, \left(\edge_2(G)\right)^2 t\,. \] Thus certainly the bound holds for some fixed $X_0 \in S$, concluding the proof. \end{proof}

\section{Martingales, embeddings, and growth rates}

Our proof of \pref{thm:goodtimes} involves the construction of embeddings of $(G,\rho)$ into a Hilbert space $\cH$. The embeddings give rise to a family of martingales in $\cH$ whose behavior can be used to control the speed of the random walk in $G$. This section is primarily expository; we review the martingale methods of \cite{NPSS06,DLP13} and a construction of Euclidean embeddings that reflect the local geometry of a discrete metric space at a fixed scale \cite{CKR01,KLMN05}.

\subsection{Control by martingales} \label{sec:martingales}

Consider a finite metric space $(X,d)$. Let $\{Z_t\}$ denote a stationary, reversible Markov chain on $X$
with the property that
\begin{equation}\label{eq:bddjumps}
d(Z_0, Z_1) \leq 1 \quad\textrm{ almost surely.}
\end{equation} 
 Let $\cY$ be a normed space and for a map $f : X \to \cY$, define \[ \|f\|_{\Lip} = \max_{x \neq y \in X} \frac{\|f(x)-f(y)\|_{\cY}}{d(x,y)}\,. \]

The following result is proved in \cite{NPSS06} (see also \cite{LZ94}). A similar decomposition appears already in the work of Kesten \cite{Kesten86} (see the discussion in \cite[Sec. 2]{BP89}) for the special case of percolation clusters in $\Z^d$. A stark difference is that in Kesten's paper, the Markov chain $\{Z_t\}$ already takes values in a subset of $\Z^d$ (and hence the map $f$ does not appear). On the other hand, this means that Kesten only bounds the speed of the walk in the ambient Euclidean metric, whereas we are interested in the speed in the intrinsic metric (which is larger, and hence harder to bound from above).

\begin{lemma}\label{lem:npss}
 Then for every $n \geq 1$, there is a forward martingale $\{A_t\}$ and a backward martingale $\{B_t\}$ such that \begin{enumerate} \item $f(Z_{2n}) - f(Z_0) = A_{n} - B_{n}$ \item For all $t=1,2,\ldots,n$, it holds that \[ \|A_{t}-A_{t-1}\|_{\cY}, \|B_t-B_{t-1}\|_{\cY} \leq 2 \|f\|_{\Lip}\,. \] \end{enumerate} \end{lemma}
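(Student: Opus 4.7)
The natural approach is a midpoint-pivoted Doob decomposition, where reversibility is used precisely to make the two ``correction terms'' at the pivot cancel. Concretely, for $t = 0, 1, \ldots, n$, I would define
\[
A_t \;:=\; \E\!\left[f(Z_{2n}) \mid Z_n, Z_{n+1}, \ldots, Z_{n+t}\right] \;-\; \E\!\left[f(Z_{2n}) \mid Z_n\right],
\]
\[
B_t \;:=\; \E\!\left[f(Z_0) \mid Z_n, Z_{n-1}, \ldots, Z_{n-t}\right] \;-\; \E\!\left[f(Z_0) \mid Z_n\right].
\]
By construction $\{A_t\}$ is a forward martingale with respect to the filtration $\mathcal{F}_t^+ := \sigma(Z_n,\ldots,Z_{n+t})$, and $\{B_t\}$ is a forward martingale with respect to $\mathcal{F}_t^- := \sigma(Z_n, Z_{n-1}, \ldots, Z_{n-t})$, which corresponds to a backward filtration in the original time indexing. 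Also $A_0 = B_0 = 0$, $A_n = f(Z_{2n}) - \E[f(Z_{2n})\mid Z_n]$, and $B_n = f(Z_0) - \E[f(Z_0)\mid Z_n]$.

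To establish item (1), I would use reversibility and stationarity to argue that the conditional law of $Z_{2n}$ given $Z_n$ coincides with the conditional law of $Z_0$ given $Z_n$ (the time-reversed chain $\tilde Z_s := Z_{2n-s}$ is a stationary Markov chain with the same transition kernel, so $(Z_n,Z_{2n})$ and $(\tilde Z_n, \tilde Z_{2n}) = (Z_n,Z_0)$ have the same law). This yields $\E[f(Z_{2n})\mid Z_n] = \E[f(Z_0)\mid Z_n]$ almost surely, and hence
\[
A_n - B_n \;=\; f(Z_{2n}) - f(Z_0) - \bigl(\E[f(Z_{2n})\mid Z_n] - \E[f(Z_0)\mid Z_n]\bigr) \;=\; f(Z_{2n}) - f(Z_0).
\]

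To establish item (2), I would invoke the Markov property to collapse the conditional expectations: letting $P$ denote the transition operator, $\E[f(Z_{2n})\mid Z_n,\ldots,Z_{n+t}] = (P^{n-t}f)(Z_{n+t})$. Hence
\[
A_t - A_{t-1} \;=\; (P^{n-t}f)(Z_{n+t}) - \E\!\left[(P^{n-t}f)(Z_{n+t}) \mid Z_{n+t-1}\right].
\]
Subtracting the $\sigma(Z_{n+t-1})$-measurable quantity $(P^{n-t}f)(Z_{n+t-1})$ from both occurrences on the right-hand side and then applying the triangle inequality and Jensen's inequality gives
\[
\|A_t - A_{t-1}\|_{\cY} \;\leq\; 2 \,\|P^{n-t}f\|_{\Lip}\cdot d(Z_{n+t}, Z_{n+t-1}) \;\leq\; 2\,\|f\|_{\Lip},
\]
using the standard fact $\|P^k f\|_{\Lip} \leq \|f\|_{\Lip}$ (a convexity/coupling argument) together with the hypothesis $d(Z_s, Z_{s+1}) \leq 1$ a.s. The identical computation, applied to the time-reversed chain, handles $B_t$; here again reversibility enters to ensure the reversed chain is Markov with the same transition kernel, so the same estimate $\|P^k f\|_{\Lip} \leq \|f\|_{\Lip}$ applies.

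\textbf{Main obstacle.} The only nontrivial content is the cancellation $\E[f(Z_{2n})\mid Z_n] = \E[f(Z_0)\mid Z_n]$, which is exactly where reversibility (rather than just the Markov property) is essential; without it the construction produces two Doob martingales whose difference has a spurious bias term, and the clean identity in (1) breaks. Once this is in hand, the bound in (2) is essentially a one-line consequence of the Markov property, the contractivity of $P$ in Lipschitz norm, and the bounded-jump assumption \eqref{eq:bddjumps}.
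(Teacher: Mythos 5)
Your construction is genuinely different from the paper's, and item (2) fails for it. The paper (following \cite{NPSS06}) builds the forward and backward martingales by telescoping \emph{centered one-step differences}: each increment has the form $f(Z_{s+1})-f(Z_s)-\E[f(Z_{s+1})-f(Z_s)\mid Z_s]$, so the bound $2\|f\|_{\Lip}$ is immediate from the bounded-jump assumption \eqref{eq:bddjumps}; reversibility is then used at every interior step, via $\E[f(Z_{s+1})\mid Z_s]=\E[f(Z_{s-1})\mid Z_s]$, to make the sum of forward increments minus the sum of backward increments telescope to $f(Z_{2n})-f(Z_0)$. Your increments, by contrast, are Doob-martingale increments of the \emph{endpoint} value, $A_t-A_{t-1}=(P^{n-t}f)(Z_{n+t})-(P^{n-t+1}f)(Z_{n+t-1})$, and controlling them requires exactly the ``standard fact'' you invoke, $\|P^kf\|_{\Lip}\le\|f\|_{\Lip}$. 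That fact is false for general reversible chains: it amounts to $P$ being a contraction in the Wasserstein-$1$ metric, which graph random walks need not be. For instance, on the tree with edges $u_1x$, $u_2x$, $xy$, $yv$ and $f(u_1)=f(u_2)=0$, $f(x)=1$, $f(y)=2$, $f(v)=3$ (so $\|f\|_{\Lip}=1$), one computes $(Pf)(x)=2/3$ and $(Pf)(y)=2$, whence $\|Pf\|_{\Lip}\ge 4/3$.

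Moreover the defect is in the statement, not merely in your proof of it: the midpoint-pivoted Doob martingales need not have increments $O(\|f\|_{\Lip})$ at all. Take levels $L_0,\dots,L_m$ with $|L_j|=2^j$ and complete bipartite connections between consecutive levels, set $f\equiv j$ on $L_j$, and glue a mirror copy (with $f\equiv -j$) at the single level-$0$ vertex $o$. This is a reversible chain with $d(Z_t,Z_{t+1})=1$ and $\|f\|_{\Lip}=1$, but the walk drifts away from $o$ in whichever funnel it enters, so for $k\asymp m$ one has $(P^{k}f)\approx +\min(k,m)$ on $L_1$, $\approx -\min(k,m)$ on the mirror of $L_1$, and $(P^{k+1}f)(o)=0$ by symmetry: a single increment of your $\{A_t\}$ is of order $m\gg 2\|f\|_{\Lip}$. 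So the Doob decomposition must be abandoned in favor of the one-step telescoping above. (Your item (1) correctly identifies a reversibility-based cancellation at the pivot, but it cannot be paired with a workable item (2).)
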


For completeness we include the proof. \begin{proof}
 Define the martingales $\{M_s\}^{2n}_{s=0}$ and $\{N_s\}^{2n}_{s=0}$
by $M_0 = f(Z_0)$ and $N_0 = f(Z_{2n})$ and for $0\le s \le t-1$, \begin{align}\label{martingale} M_{s+1} - M_s &\seteq f(Z_{s+1}) - f(Z_s) -\E [f(Z_{s+1}) - f(Z_s) \mid Z_s] \\ \nonumber N_{s+1} - N_s &\seteq f(Z_{t-s-1}) - f(Z_{t-s}) - \E [f(Z_{t-s-1}) -f(Z_{t-s}) \mid Z_{t-s}]. \end{align} Observe that $\{M_s\}$ is a martingale with respect to the filtration induced on $\{Z_0, Z_1,\ldots , Z_{2n}\}$ and $\{N_s\}$ is a martingale with respect to the filtration induced on $\{Z_{2n}, Z_{2n-1},\ldots, Z_0\}$.

For every $1\le s \le 2n-1$ using stationarity gives $\E[f(Z_{s+1}) \mid Z_s] = \E[f(Z_{s-1}) \mid Z_s]$, and it follows that \begin{equation} \label{decomposition} f(Z_{s+1}) - f(Z_{s-1}) = (M_{s+1} - M_{s}) - (N_{2n-s+1} - N_{2n-s}). \end{equation} Now consider the martingales $\{A_t\}_{0\le t \le n}$ and $\{B_t\}_{0\le t \le n}$ given by \begin{align*} A_t& \seteq \sum_{s=0}^{t}M_{2s}-M_{2s-1}\\ B_t & \seteq \sum_{s=0}^{t}N_{2s}-N_{2s-1}. \end{align*} (2) follows the preceding definition and \eqref{martingale},
 along with assumption \eqref{eq:bddjumps}. The proof of (1) is by summing \eqref{decomposition} over $s=1,3,\ldots ,2n-1.$ \end{proof}

Combining \pref{lem:npss} with Azuma's inequality for $\cH$-valued martingales \cite{azuma} yields the following.

\begin{corollary}\label{cor:tail} If $\cH$ is a Hilbert space, then for all $n \geq 1$, \[ \Pr\left(\|f(Z_{2n})-f(Z_0)\|_{\cH} \geq \lambda\right) \leq 4 \exp\left(\frac{-\lambda^2}{32 n \|f\|_{\Lip}^2}\right) \] \end{corollary}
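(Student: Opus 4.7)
The plan is to combine the two-sided martingale decomposition of \pref{lem:npss} with the Hilbert-space-valued Azuma (Pinelis) inequality \cite{azuma}.

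Concretely, I would first invoke \pref{lem:npss} to write $f(Z_{2n})-f(Z_0) = A_n - B_n$, where $\{A_t\}_{t=0}^n$ and $\{B_t\}_{t=0}^n$ are $\cH$-valued martingales (with respect to the forward and reverse filtrations, respectively) starting at $A_0 = B_0 = 0$ and with increments satisfying $\|A_t-A_{t-1}\|_{\cH}, \|B_t-B_{t-1}\|_{\cH} \leq 2\|f\|_{\Lip}$ almost surely. By the triangle inequality, the event $\{\|f(Z_{2n})-f(Z_0)\|_{\cH} \geq \lambda\}$ is contained in $\{\|A_n\|_{\cH} \geq \lambda/2\} \cup \{\|B_n\|_{\cH} \geq \lambda/2\}$, so a union bound reduces the task to proving
\[ \Pr(\|M_n\|_{\cH} \geq \lambda/2) \leq 2 \exp\left(\frac{-\lambda^2}{32 n \|f\|^2_{\Lip}}\right) \]
separately for $M \in \{A,B\}$.

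For each such $M$, I would appeal to the Hilbert-space version of Azuma's inequality: for any $\cH$-valued martingale $\{M_t\}$ with $M_0 = 0$ and $\|M_t - M_{t-1}\|_{\cH} \leq c$ almost surely, Pinelis's bound yields $\Pr(\|M_n\|_{\cH} \geq \mu) \leq 2 \exp(-\mu^2/(2 n c^2))$. Substituting $c = 2\|f\|_{\Lip}$ and $\mu = \lambda/2$ produces exactly the inequality above, and summing the two instances accounts for the factor of $4$ in the final estimate.

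There is no real obstacle; the only minor subtlety is that Azuma must be applied to $\{B_t\}$ using the reverse-time filtration, but the Pinelis inequality is agnostic to the direction of the filtration and requires only a martingale-difference structure with bounded increments in norm. The factor of $2$ in the increment bound from \pref{lem:npss} stems directly from the centering of the Doob decomposition together with the unit-jump hypothesis \eqref{eq:bddjumps} satisfied by $\{Z_t\}$, so no additional work is required to verify the hypotheses.
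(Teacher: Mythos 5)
Your proposal is correct and is exactly the argument the paper intends: the paper derives \pref{cor:tail} in one line by combining \pref{lem:npss} with the Hilbert-space Azuma/Pinelis inequality, and your union bound with $\mu=\lambda/2$, $c=2\|f\|_{\Lip}$ reproduces the constant $32$ and the prefactor $4$ precisely. No gaps.
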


Define the constants \begin{align}\label{eq:alphan} \alpha_n &= \left\lceil \log_8(\sqrt{2n})\right\rceil \\ \beta_n &= \lceil \log_8(2n)\rceil\,.\label{eq:betan} \end{align}

\begin{lemma}\label{lem:rwcontrol} Consider a graph $G=(V_G,E_G)$, a finite subset $S \subseteq V_G$, and a family $\{F_k : S \to \cH\}_{k \in \N}$ of $1$-Lipschitz mappings into a Hilbert space. Let $\f  : \N \to \R_+$ be a given function. For $k_0 \in \N$, define the set of pairs \[ \cG(k_0, \f) = \left\{ (x,y) \in S^2 : \textrm{for all $k \geq k_0$},\ d_G(x,y) \geq 8^k \implies \|F_k(x)-F_k(y)\|_{\cH} \geq \frac{8^k}{\f(k)} \right\}\,. \] If $\{Z_t\}$ is the stationary random walk restricted to $S$ (cf. \pref{def:restricted}), then for every $n \geq 1$, \begin{align*}
\E\left[d_G(Z_{2n},Z_0)^2 \cdot \1_{\cG(\alpha_n,\f)}(Z_0, Z_{2n})\right] 
&\leq 2n + 256 \sum_{k=\alpha_n}^{\beta_n} 8^{2k} \exp\left(\frac{-8^{2k}}{32 n \f(k)^2}\right)\,. \end{align*} \end{lemma}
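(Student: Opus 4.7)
The plan is to decompose $d_G(Z_0,Z_{2n})^2$ into dyadic shells at scales $\{8^k\}$ and to control each shell using the sub-Gaussian tail of \pref{cor:tail} applied to the embeddings $F_k$. Two preliminary observations set up the range of scales: since each transition of the restricted walk has $d_G$-length at most $1$, we have $d_G(Z_0, Z_{2n}) \le 2n$ deterministically, so the event $\{d_G(Z_0,Z_{2n}) \ge 8^k\}$ is empty for $k > \beta_n = \lceil \log_8(2n)\rceil$; dually, $\alpha_n = \lceil \log_8\sqrt{2n}\rceil$ is the smallest scale at which the co-Lipschitz property of $F_k$ will be invoked, which matches the smallest scale at which the Azuma exponent becomes nontrivial for a $1$-Lipschitz map at time $2n$.

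The main inequality I would write is the layer-cake bound
\[
d_G(Z_0,Z_{2n})^2 \;\le\; 8^{2\alpha_n}\;+\; \sum_{k=\alpha_n}^{\beta_n} \bigl(8^{2(k+1)} - 8^{2k}\bigr)\,\1\{d_G(Z_0,Z_{2n}) \ge 8^k\}\,,
\]
which is valid because on $\{8^k \le d_G(Z_0,Z_{2n}) < 8^{k+1}\}$ the right-hand side telescopes to $8^{2(k+1)} \ge d_G^2$. The leading $8^{2\alpha_n}$ absorbs the regime $d_G < 8^{\alpha_n}$; since $8^{\alpha_n} \le 8\sqrt{2n}$, this term is $O(n)$ and accounts for the ``$2n$'' in the statement up to an absolute constant. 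The combinatorial prefactor $8^{2(k+1)} - 8^{2k} = 63\cdot 8^{2k}$, combined with the $4$ from the Azuma tail, produces the coefficient $256$.

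The crux is the single-scale estimate. For each fixed $k \ge \alpha_n$, the defining property of $\cG(\alpha_n,\f)$ forces $\|F_k(Z_0) - F_k(Z_{2n})\|_{\cH} \ge 8^k/\f(k)$ on $\cG \cap \{d_G(Z_0, Z_{2n}) \ge 8^k\}$. Because $F_k$ is $1$-Lipschitz and $\{Z_t\}$ is stationary, reversible, and makes steps of length at most one, \pref{cor:tail} applied with $f = F_k$ and $\lambda = 8^k/\f(k)$ yields
\[
\Pr\!\bigl(d_G(Z_0,Z_{2n}) \ge 8^k,\ (Z_0,Z_{2n}) \in \cG(\alpha_n,\f)\bigr) \;\le\; 4\exp\!\Bigl(\tfrac{-8^{2k}}{32n\,\f(k)^2}\Bigr)\,.
\]
Taking expectations of the dyadic inequality against $\1_\cG$ and substituting this bound at each scale gives the claimed inequality.

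I do not expect any step to be a substantive obstacle: \pref{cor:tail} packages the real martingale work, and the set $\cG(\alpha_n,\f)$ is defined precisely to convert a Hilbert-space tail into a graph-distance tail at each scale $8^k$. The only accounting issue is matching the small-scale constant $8^{2\alpha_n}$ with the $2n$ written in the statement, which is a constant-factor matter absorbed into an absolute constant via $8^{2\alpha_n} \le 128\,n$.
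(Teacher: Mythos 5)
Your proposal is correct and follows essentially the same route as the paper: a dyadic layer-cake decomposition over scales $8^k$, conversion of the distance tail into a Hilbert-norm tail via the definition of $\cG(\alpha_n,\f)$ for $k \ge \alpha_n$, and the sub-Gaussian bound of \pref{cor:tail} at each scale. The constant-factor issue you flag for the small-scale term is real but also present in the paper's own proof (whose truncation likewise yields $8^{2\alpha_n} = O(n)$ rather than literally $2n$), and it is harmless since the lemma is only ever applied up to absolute constants.
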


\begin{proof} Use that fact that for a non-negative random variable $X$, we have $\E[X^2] \leq \sum_{k=0}^{\infty} 8^{2k+2} \Pr[X \geq 8^k]$ to write \begin{align*} \E[d_G(Z_{2n},Z_0)^2 \cdot \1_{\cG(\alpha_n,\phi)}(Z_0, Z_{2n})] &\leq \sum_{k=0}^{\beta_n} 8^{2k+2} \Pr[d_G(Z_{2n},Z_0) \geq 8^k \wedge (Z_0,Z_{2n}) \in \cG(\alpha_n,\f)]\\ & \leq \sum_{k=0}^{\beta_n} 8^{2k+2} \Pr\left[\|F_k(Z_0)-F_k(Z_{2n})\|_{\cH} \geq \frac{8^k}{\f(k)}\right] \\ &\leq 2n + \sum_{k=\alpha_n}^{\beta_n} 8^{2k+2} \Pr\left[\|F_k(Z_0)-F_k(Z_{2n})\|_{\cH} \geq \frac{8^k}{\f(k)}\right]\,, \end{align*} where in the first inequality we have used the fact that $d_G(Z_0, Z_{2n}) \leq 2n$ is always true. The desired bound now follows from \pref{cor:tail}. \end{proof}

We remark on one straightforward (but illustrative) application of \pref{lem:rwcontrol}. Following \cite{DLP13}, we say that a metric space $(X,d)$ admits a {\em threshold embedding with distortion $D$ into a Hilbert space $\cH$} if there is a family of $1$-Lipschitz maps $\{ F_k : X \to \cH \}$ such that \begin{equation}\label{eq:threshold} x,y \in X \textrm{ and } d(x,y) \geq 8^k \implies \|F_k(x)-F_k(y)\|_{\cH} \geq \frac{8^k}{D}\,. \end{equation} It is proved in $\cite{DLP13}$ that if such a threshold embedding exists, then $M_2(X,d) \leq O(D)$ (recall the definition of Markov type from \pref{sec:planar}). Bounding the graphic Markov type is substantially easier.

\begin{corollary}\label{cor:threshold} If $G=(V_G, E_G)$ is a graph and $(V_G, d_G)$ admits a threshold embedding into a Hilbert space $\cH$ with distortion $D$, then \[ \edge_2(G) \leq O(D)\,. \] \end{corollary}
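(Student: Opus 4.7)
The plan is to feed the threshold embedding directly into \pref{lem:rwcontrol} with the constant function $\f(k) \equiv D$. By the defining property \eqref{eq:threshold} of a threshold embedding, the implication $d_G(x,y) \geq 8^k \Rightarrow \|F_k(x)-F_k(y)\|_{\cH} \geq 8^k/D$ holds unconditionally for every $k$ and all $x,y \in V_G$. Hence for every finite $S \subseteq V_G$ and every $k_0 \geq 1$, the set $\cG(k_0, \f)$ appearing in the lemma coincides with $S \times S$, so the indicator in its conclusion is almost surely $1$. Letting $\{Z_t\}$ denote the stationary random walk restricted to $S$, \pref{lem:rwcontrol} therefore yields, for every $n \geq 1$,
\[
\E\!\left[d_G(Z_{2n}, Z_0)^2\right] \;\leq\; 2n + 256 \sum_{k=\alpha_n}^{\beta_n} 8^{2k} \exp\!\left(\frac{-8^{2k}}{32\, n D^2}\right)\,.
\]

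All that remains is to bound the right-hand side by $O(n D^2)$, which is a routine geometric/exponential sum estimate. Writing $a = 32 n D^2$ and $u_k = 8^{2k}/a = 64^k/a$, the $k$-th summand equals $a \cdot u_k e^{-u_k}$, with $u_{k+1} = 64\, u_k$. I would split the sum at the first index $k^\star$ for which $u_{k^\star} \geq 1$. For $k < k^\star$ the bound $u_k e^{-u_k} \leq u_k$ reduces the contribution to a geometric series in $u_k$ of ratio $64$, summing to $O(u_{k^\star}) = O(1)$. For $k \geq k^\star$ the values $u_{k^\star + j} = 64^j\, u_{k^\star}$ grow multiplicatively, and the super-exponential decay of $x \mapsto x e^{-x}$ on $[1,\infty)$ makes the tail summable to an absolute constant. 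Multiplying back by $a$ gives $O(n D^2)$, and hence $\E[d_G(Z_{2n}, Z_0)^2] \leq O(n D^2)$.

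To extend the estimate from even times to all $t \in \N$, I would use the crude comparison $d_G(Z_0, Z_{2n+1}) \leq d_G(Z_0, Z_{2n}) + 1$ together with $(x+1)^2 \leq 2x^2 + 2$, losing only an absolute constant. Since the resulting bound is uniform over finite $S \subseteq V_G$, one concludes $\edge_2(G)^2 \leq O(D^2)$, i.e., $\edge_2(G) \leq O(D)$.

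There is no genuine obstacle here; \pref{lem:rwcontrol} and the definition of a threshold embedding do all of the work, and the only computation is the elementary sum estimate sketched above. The one mild subtlety is that when $D$ is very large compared with $\sqrt{n}$, the transition index $k^\star$ may lie outside the range $[\alpha_n,\beta_n]$ supplied by the lemma; but in that regime the trivial bound $d_G(Z_0, Z_{2n})^2 \leq (2n)^2 \leq O(n D^2)$ already suffices, so nothing is lost.
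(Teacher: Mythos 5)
Your proof is correct and follows essentially the same route as the paper: apply \pref{lem:rwcontrol} with $\f \equiv D$ (so that $\cG(k_0,\f)=S\times S$ and the indicator is identically $1$), evaluate the resulting sum, and pass from even to odd times by adding a single step. Your sum estimate $O(nD^2)$ is the careful version of what the paper asserts (the paper's proof writes $O(Dn)$, apparently a typo, but $O(D^2 n)$ is what the computation actually gives and is exactly what the definition of $\edge_2$ requires to conclude $\edge_2(G)\leq O(D)$).
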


\begin{proof} Fix a finite subset $S \subseteq V_G$.  Let $\{Z_t\}$ denote the stationary random walk restricted to $S$. Let $\{F_k : V_G \to \cH\}$ be the claimed threshold embedding. Apply \pref{lem:rwcontrol} to the family $\{F_k|_S\}$ with $\f \equiv D$, in which case $\1_{\cG(\alpha_n,k_0)} \equiv \1$. One concludes that for every $n \geq 1$, \[ \E[d_G(Z_{2n},Z_0)^2] \leq O(Dn)\,. \] Using $d_G(Z_n,Z_0) \leq d_G(Z_{n+1},Z_0) + 1$ yields a similar estimate for odd times, completing the proof. \end{proof}

On the other hand, we will not have a uniform lower bound as in \eqref{eq:threshold} that holds for all pairs $x,y \in X$.

\medskip

\paragraph{Volume growth}

Let $G$ be a graph with vertex set $V_G$. For $x \in V_G$, we recall that $B^G_x(R)$ is the closed $R$-ball around $x$ in the metric $d_G$. Define \begin{equation}\label{eq:phidef} \phi^{G}_x(k) = \log \frac{\left|B^G_x(8^k)\right|}{\left|B^G_x(8^{k-1})\right|}\,. \end{equation}

In the next section, we exibit a family of mappings that reflect the geometry of $G$ well at scale $8^k$ when $\phi^G_x(k)$ is small.

\begin{lemma}\label{lem:embed} For any $k \geq 1$, there is a $1$-Lipschitz map $F_k : V_G \to \ell_2$ such that for all $x,y \in V_G$, it holds that \[ d_G(x,y) \geq 8^k \implies \|F_k(x)-F_k(y)\|_2 \geq \frac{8^k}{128 (1+\phi^G_x(k))} \] \end{lemma}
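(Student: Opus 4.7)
The plan is to build $F_k$ via a Calinescu--Karloff--Rabani random partition of $V_G$ at scale $\asymp 8^k$, then lift it to a Hilbert-valued map through randomly signed distance-to-boundary functions. The appearance of $\phi^G_x(k)$ in the conclusion will come out of the CKR/KLMN padding inequality, which controls local distortion precisely by a logarithmic volume ratio of the same form.

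First, I would apply the CKR construction at scale $\Delta := 8^k / 2$, chosen so that every cluster $P \in \mathcal{P}$ has $\mathrm{diam}(P) < 8^k$. By the standard analysis (see \cite{CKR01} and the sharp form in \cite{KLMN05}), one obtains a random partition of $V_G$ satisfying, for every $x \in V_G$ and every $t \in (0,\Delta]$,
\[
\Pr\bigl[B^G_x(t) \not\subseteq P(x)\bigr] \;\leq\; C_0 \,\frac{t}{8^k}\,\bigl(1 + \phi^G_x(k)\bigr),
\]
for some universal constant $C_0$; after routine monotonicity bookkeeping in the radius, the log-volume ratio produced by CKR is dominated by $\phi^G_x(k) = \log\bigl(|B^G_x(8^k)|/|B^G_x(8^{k-1})|\bigr)$. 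Setting $t := 8^k/\bigl(4C_0(1+\phi^G_x(k))\bigr)$ then yields $\Pr[d_G(x, V_G \setminus P(x)) \geq t] \geq \tfrac12$.

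Next, attach an independent uniform $\pm 1$ sign $\sigma(P)$ to each cluster and set
\[
g_{\mathcal{P},\sigma}(x) \;:=\; \tfrac12\,\sigma(P(x))\cdot d_G(x, V_G \setminus P(x)).
\]
A short case analysis (same vs.\ different cluster, same vs.\ opposite sign) shows that $g_{\mathcal{P},\sigma}$ is $1$-Lipschitz on $V_G$; the factor $\tfrac12$ absorbs the off-by-one in the opposite-sign case, where $x,y$ adjacent across a partition boundary give $d_G(x,\cdot) + d_G(y,\cdot) = 2\,d_G(x,y)$. Regarding $F_k(x) := g_{\mathcal{P},\sigma}(x)$ as an element of $L^2(\Omega)$, where $\Omega$ is the underlying probability space of $(\mathcal{P},\sigma)$, produces a $1$-Lipschitz map into a Hilbert space.

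For the co-Lipschitz bound at scale $8^k$, suppose $d_G(x,y) \geq 8^k$. Since $\mathrm{diam}(P) < 8^k$ for every cluster, $P(x) \neq P(y)$ almost surely. Conditioning on $\mathcal{P}$ and averaging over the two independent signs,
\[
\E_{\sigma}\bigl[\,|g(x)-g(y)|^2 \,\bigm|\, \mathcal{P}\bigr] \;=\; \tfrac14\bigl(d_G(x,V_G \setminus P(x))^2 + d_G(y,V_G \setminus P(y))^2\bigr) \;\geq\; \tfrac14\,d_G(x, V_G \setminus P(x))^2,
\]
so the padding estimate of Step~1 gives $\|F_k(x)-F_k(y)\|_{L^2}^2 \geq \tfrac18 t^2$, and matching constants recovers the claimed $1/(128(1+\phi^G_x(k)))$. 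The main technical obstacle is Step~1: CKR and KLMN state padding bounds with a log ratio of the form $\log\bigl(|B(x,\alpha\Delta)|/|B(x,\beta\Delta)|\bigr)$ for explicit $\alpha > \beta$, and one must verify that with $\Delta = 8^k/2$ this ratio is controlled by $\phi^G_x(k)$ up to an absolute constant. This is elementary but is exactly what pins down the numerical factor $128$ in the statement.
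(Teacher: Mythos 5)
Your construction is the same as the paper's: a CKR-style random partition with clusters of diameter $< 8^k$ whose padding probability is controlled by a logarithmic volume ratio (the paper's \pref{lem:partition}), lifted to a Hilbert-valued map by randomly weighting the distance-to-cluster-boundary function (the paper's \pref{lem:embed1}, which uses i.i.d.\ Bernoulli $\{0,1\}$ coefficients in place of your $\pm1$ signs — an immaterial difference, as is your exact computation of the second moment versus the paper's one-sided bound). The only point that needs care is precisely the step you defer, and it is a matter of which radii the partition uses, not just of constants. If the cutting radius $R$ is drawn uniformly from $[8^k/4,\, 8^k/2)$ (clusters are then balls minus earlier clusters, so their diameter is still $< 8^k$, which is all the argument requires), the harmonic-sum analysis produces the ratio $\log\bigl(e\,|B^G_x(\tfrac58 8^k)|/|B^G_x(8^{k-1})|\bigr) \leq 1+\phi^G_x(k)$, since the bad annulus $[R-t, R+t)$ sits inside $[8^{k-1}, \tfrac58 8^k]$. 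But if one instead reads ``scale $\Delta=8^k/2$'' in the usual CKR normalization ($R \in [\Delta/4,\Delta/2) = [8^k/8, 8^k/4)$), the resulting ratio is $\log\bigl(|B^G_x(\tfrac{5}{16}8^k)|/|B^G_x(\tfrac{1}{16}8^k)|\bigr)$, and because $8^k/16 < 8^{k-1}$ the denominator can be far smaller than $|B^G_x(8^{k-1})|$; this quantity is \emph{not} dominated by $O(1)\cdot(1+\phi^G_x(k))$ in general (all the volume growth could occur between radii $8^k/16$ and $8^{k-1}$), so the deferred ``monotonicity bookkeeping'' would fail under that reading. With the radius taken in $[8^k/4, 8^k/2)$, and choosing $t$ so the padding failure probability is at most $\tfrac12$, your argument closes and matches the paper's.
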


\subsection{Embeddings and growth rates} \label{sec:embed}

For a metric space $(X,d)$, define $B(x,R) = \{ y \in X : d(x,y) \leq R \}$. We now prove the following generalization of \pref{lem:embed}.

\begin{lemma}\label{lem:single-scale-embedding} If $(X,d)$ is a discrete metric space, then the following holds. For any $\tau > 0$, there is a $1$-Lipschitz mapping $\f_{\tau} : X \to \ell_2$ such that for all $x,y \in X$, \[ d(x,y) \geq \tau \implies \|\f_{\tau}(x)-\f_{\tau}(y)\|_2 \geq \frac{\tau}{128   \log \left(\frac{e |B(x,\frac{5}{8} \tau)|}{|B(x,\frac{1}{8}\tau)|}\right)} \] \end{lemma}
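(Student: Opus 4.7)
The plan is to construct $\f_\tau$ as a single-scale partition-based embedding, building on \cite{CKR01,KLMN05}. Fix the partition diameter $\Delta := \tfrac{5}{8}\tau$ and invoke a padded random partition $\Pi$ of $X$ with the following two properties: every cluster has diameter at most $\Delta$, and for every $x \in X$ and $0 < r \leq \tfrac{1}{8}\tau$,
\[
\Pr\!\left[B(x,r) \not\subseteq \Pi(x)\right] \;\le\; \kappa\,\frac{r}{\Delta}\,
\log\!\left(\frac{e\,|B(x,\tfrac{5}{8}\tau)|}{|B(x,\tfrac{1}{8}\tau)|}\right),
\]
for an absolute constant $\kappa$. (This is the measured-descent padding inequality for $\Pi$ with the reference scales set to $\tfrac{5}{8}\tau$ and $\tfrac{1}{8}\tau$.) Next, assign independent Rademacher signs $\sigma(C)\in\{\pm 1\}$ to the clusters $C$ of $\Pi$, let $\mathrm{pad}(y) := d(y, X\setminus \Pi(y))$, and define the random real-valued function $g(y) := \sigma(\Pi(y))\cdot \mathrm{pad}(y)$.

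The embedding itself will be $\f_\tau(y) := \tfrac{1}{2}\,g(y,\cdot) \in L^2(\Omega)$, where $\Omega$ supports $(\Pi,\sigma)$; taking $\Omega$ countably generated identifies $L^2(\Omega)$ isometrically with $\ell_2$. The $1$-Lipschitz property follows from the deterministic pointwise bound $|g(x)-g(y)|\le 2\,d(x,y)$: when $\Pi(x)=\Pi(y)$, this is just the $1$-Lipschitzness of $\mathrm{pad}(\cdot)$; when $\Pi(x)\neq\Pi(y)$, both $\mathrm{pad}(x)$ and $\mathrm{pad}(y)$ are bounded by $d(x,y)$, since each point lies in the complement of the other's cluster. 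For the lower bound, suppose $d(x,y)\geq \tau$; since $\diam\,\Pi(\cdot)\leq \Delta<\tau$, the points $x,y$ are in distinct clusters, so conditionally on $\Pi$ the signs $\sigma(\Pi(x)),\sigma(\Pi(y))$ are independent Rademachers and $\E_\sigma[|g(x)-g(y)|^2] = \mathrm{pad}(x)^2 + \mathrm{pad}(y)^2 \geq \mathrm{pad}(x)^2$.

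Abbreviating $L := \log(e\,|B(x,\tfrac{5}{8}\tau)|/|B(x,\tfrac{1}{8}\tau)|)$, take $r := \Delta/(2\kappa L) = 5\tau/(16\kappa L)$ in the padding bound; since $L\geq 1$, one has $r \leq \tfrac{1}{8}\tau$ whenever $\kappa$ is at least a small absolute constant, and the padding bound then evaluates to $\Pr[\mathrm{pad}(x)\geq r]\geq \tfrac{1}{2}$. Hence $\E[\mathrm{pad}(x)^2]\geq r^2/2$ by a one-line second-moment argument, and therefore $\|\f_\tau(x)-\f_\tau(y)\|_{L^2}^2 \geq r^2/8$, which beats $(\tau/(128L))^2$ for any absolute $\kappa$ of the size furnished by \cite{KLMN05}. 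The only real obstacle is quantitative bookkeeping: one must invoke a padded-partition theorem whose log factor is controlled by the specific ball-ratio $|B(x,\tfrac{5}{8}\tau)|/|B(x,\tfrac{1}{8}\tau)|$ appearing in the conclusion (rather than a slightly different ratio produced by the standard measured-descent formulation), and then track the multiplicative constants through the padding calculation so that the denominator comes out to exactly $128\log(e|B(x,\tfrac{5}{8}\tau)|/|B(x,\tfrac{1}{8}\tau)|)$; conceptually, nothing beyond the classical single-scale Rao/CKR embedding is required.
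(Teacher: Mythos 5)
Your proposal is correct and follows essentially the same route as the paper: a CKR-type padded random partition at scale $\approx \tau$ (with padding probability controlled by the ratio $|B(x,\tfrac58\tau)|/|B(x,\tfrac18\tau)|$), composed with independent random signs on clusters times the distance-to-cluster-boundary function, viewed as a map into $L^2$ of the underlying probability space. The only differences are cosmetic --- you use Rademacher signs with a factor $\tfrac12$ where the paper uses Bernoulli $\{0,1\}$ coins, and you cite the padded-partition inequality (which the paper re-proves via the random-permutation/random-radius argument of CKR only ``for completeness''); the constants work out either way.
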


\pref{lem:embed} is a well-known result in metric embedding theory; see, e.g., \cite{KLMN05} where a similar lemma is stated. We provide a proof here for the sake of completeness.

By a simple compactness argument, it suffices to prove \pref{lem:single-scale-embedding} for $X$ finite, which we now assume. Given a probability space $(\Omega,\cB,\mu)$, we use $L^2(\mu)$ to denote the Hilbert space of measureable real-valued random variables with inner product $\langle Y,Z\rangle_{L^2(\mu)} = \E[YZ]$. If $P$ is a partition of $X$, we denote by $P : X \to 2^X$ the map that sends $x \in X$ to the unique set $P(x) \in P$ containing $x$.

\begin{lemma}\label{lem:embed1} For any value $\tau > 0$ and $\e : X \to \R_+$, the following holds.  Let $\cP$ be a random partition of $X$ with the following two properties: \begin{enumerate} \item Almost surely, $\max_{S \in \cP} \diam_{(X,d)}(S) < \tau$. \item For every $x \in X$, \[\Pr\left[\vphantom{\bigoplus}B(x, \e(x) \tau) \subseteq \cP(x)\right] \geq \delta\,. \] \end{enumerate} Then there exists a $1$-Lipschitz mapping $\varphi_{\tau} : X \to \ell_2$ such that for all $x,y \in X$, \[ d(x,y) \geq \tau \implies \|\varphi_{\tau}(x)-\varphi_{\tau}(y)\|_{2} \geq \frac{\sqrt{\delta}}{2} \e(x) \tau\,. \] \end{lemma}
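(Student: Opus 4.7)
The plan is to build the embedding by combining the random partition $\cP$ with independent random $\pm 1$ signs attached to each cluster, and to take the embedding into $L^2$ of the associated probability space. This is the standard random-partition construction of Rao.

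More precisely, let $(\Omega, \cB, \mu)$ be the joint probability space of $\cP$ and an independent assignment $\sigma : \cP \to \{-1, +1\}$ of uniform i.i.d.\ signs to the clusters of $\cP$. For each $(\cP, \sigma) \in \Omega$, define
\[
g_{\cP, \sigma}(x) \seteq \sigma(\cP(x)) \cdot d\!\left(x,\, X \setminus \cP(x)\right)\,,
\]
and set $\varphi_\tau(x) \seteq \tfrac{1}{2}\, g_{(\cdot)}(x) \in L^2(\mu)$. The definition is engineered so that, when two points lie in different clusters, the signs decorrelate the contributions of $x$ and $y$ and produce a cancellation-free lower bound on $\E[(g(x)-g(y))^2]$.

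The first step is to verify that $\varphi_\tau$ is $1$-Lipschitz. If $\cP(x) = \cP(y)$, then the signs cancel and $|g_{\cP,\sigma}(x) - g_{\cP,\sigma}(y)| \le d(x,y)$ by the triangle inequality for $z \mapsto d(z, X\setminus \cP(x))$. If $\cP(x) \neq \cP(y)$, then $y \notin \cP(x)$ gives $d(x, X\setminus \cP(x)) \le d(x,y)$, and symmetrically for $y$, so $|g(x)-g(y)| \le 2 d(x,y)$. Hence $g$ is $2$-Lipschitz pointwise, so $\varphi_\tau$ is $1$-Lipschitz into $L^2(\mu)$.

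The second step is the lower bound. Fix $x, y$ with $d(x,y) \ge \tau$. Property (1) forces $\cP(x) \neq \cP(y)$ almost surely, so the signs $\sigma(\cP(x))$ and $\sigma(\cP(y))$ are independent and mean zero. Conditioning on $\cP$ and averaging over $\sigma$,
\[
\E_{\sigma}\!\left[(g(x) - g(y))^2 \,\middle|\, \cP\right] = d(x, X \setminus \cP(x))^2 + d(y, X \setminus \cP(y))^2 \ge d(x, X \setminus \cP(x))^2\,.
\]
On the event $\{B(x, \e(x)\tau) \subseteq \cP(x)\}$, we have $d(x, X\setminus \cP(x)) \ge \e(x)\tau$. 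By property (2) this event has probability at least $\delta$, so
\[
\E_{\cP, \sigma}\!\left[(g(x)-g(y))^2\right] \ge \delta\, (\e(x) \tau)^2\,.
\]
Taking square roots and dividing by $2$ yields $\|\varphi_\tau(x) - \varphi_\tau(y)\|_{L^2(\mu)} \ge \tfrac{\sqrt{\delta}}{2}\, \e(x)\, \tau$, as required.

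There is no serious obstacle: the only subtlety is remembering that property (1) ensures $\cP(x) \neq \cP(y)$ (hence sign-independence) whenever $d(x,y) \ge \tau$, which is precisely where the $\diam < \tau$ hypothesis is used. Since $X$ is finite, $L^2(\mu)$ embeds isometrically into $\ell_2$, producing the desired map.
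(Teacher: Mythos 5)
Your proof is correct and follows essentially the same route as the paper: embed into $L^2$ of the probability space of the random partition decorated with i.i.d.\ random labels on clusters, and use the padding event to lower-bound the second moment. The only cosmetic difference is that you use Rademacher signs and rescale by $\tfrac12$ to restore $1$-Lipschitzness, whereas the paper uses Bernoulli $\{0,1\}$ labels (keeping the map $1$-Lipschitz pointwise) and pays the factor $\Pr[\alpha_{P(x)}=1]\Pr[\alpha_{P(y)}=0]=\tfrac14$ in the variance bound; the constants come out identical.
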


\begin{proof} For every $P \in \supp(\cP)$, let $\{\alpha_S : S \in P\}$ be a sequence of i.i.d. Bernoulli $\{0,1\}$ random variables (independent of $\cP$).

Consider the (random) map $F : X \to \R$ given by \[ F(x) = \alpha_{P(x)} \cdot d(x, X \setminus P(x))\,. \] By construction, $F$ is almost surely $1$-Lipschitz.

Now fix $x,y \in X$ with $d(x,y) \geq \tau$. Note that by assumption (1), $P(x) \neq P(y)$.  Therefore \[ \E |F(x)-F(y)|^2 \geq \e(x)^2 \tau^2 \cdot \Pr[B(x,\e(x) \tau) \subseteq P(x)] \cdot \Pr[\alpha_{P(x)}=1] \cdot \Pr[\alpha_{P(y)}=0] \geq \frac{\delta \e(x)^2 \tau^2}{4}\,. \] Therefore $F : X \to L^2(\mu)$ provides the desired mapping,
where $\mu$ is the law of the random map $F$.   Note that since $X$ is finite, $\mu$ is finitely supported, so one can take
$L^2(\mu)$ as a finite-dimensional Hilbert space.
\end{proof}

In light of \pref{lem:embed}, in order to prove \pref{lem:single-scale-embedding}, it suffices to construct an appropriate random partition. To do so, we employ the method and analysis of \cite{CKR01}.

\begin{lemma}\label{lem:partition} For every $\tau > 0$, there is a random partition $\cP$ satisfying the assumptions of \pref{lem:embed} with $\delta=\frac12$ and \begin{equation}\label{eq:seteps} \e(x) =\left(32 \log \left(\frac{e |B(x,\frac{5}{8} \tau)|}{|B(x,\frac{1}{8}\tau)|}\right)\right)^{-1} \,. \end{equation} \end{lemma}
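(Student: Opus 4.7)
The plan is to use the random partition of Calinescu, Karloff, and Rabani. Independently sample (i) a radius $R$ uniformly from $[\tau/4,\tau/2)$ and (ii) a uniformly random permutation $\pi$ of $X$. For each $x \in X$, let $y^*(x)$ denote the $\pi$-minimal element of $\{y \in X : d(x,y) \le R\}$, and define $\cP(x) = \{z \in X : y^*(z) = y^*(x)\}$. Since every $z \in \cP(x)$ satisfies $d(z,y^*(x)) \le R \le \tau/2$, each cluster is contained in a ball of radius $\tau/2$, so $\diam_{(X,d)}(\cP(x)) \le 2R < \tau$ almost surely, verifying condition (1) of \pref{lem:embed1}.

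For the padding bound, I would fix $x \in X$, set $r = \e(x)\tau$, and (without loss of generality, since we may assume the ratio inside the logarithm defining $\e(x)$ is finite) assume $\e(x) \le 1/8$. Enumerate the points of $X$ as $y_1,y_2,\ldots$ in non-decreasing order of $d(x,\cdot)$, breaking ties arbitrarily. Following the CKR analysis, the ball $B(x,r)$ fails to lie in $\cP(x)$ only if for some index $i$ the point $y_i$ both \emph{settles} $B(x,r)$ (is the $\pi$-minimal point with $d(x,y_i) \le R+r$) and \emph{cuts} $B(x,r)$ (satisfies $d(x,y_i) > R-r$). Letting $C_i$ denote this event, one bounds $\Pr[C_i]$ by decoupling into the radius event and the permutation event: $R$ must lie in a window of length $2r$, contributing a factor of $\frac{2r}{\tau/4} = 8\e(x)$, and $y_i$ must be $\pi$-minimal among a set that always contains $\{y_1,\ldots,y_i\}$, contributing at most $1/i$.

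The next step is to identify which indices actually contribute. The event $C_i$ requires $d(x,y_i) \in (R-r,R+r]$ for some admissible $R \in [\tau/4,\tau/2]$, hence $d(x,y_i) \in (\tau/4 - r, \tau/2 + r] \subseteq (\tau/8, 5\tau/8]$, so only indices $i$ with $|B(x,\tau/8)| < i \le |B(x,5\tau/8)|$ matter. A union bound therefore gives
\[
\Pr\!\left[B(x,r) \not\subseteq \cP(x)\right] \;\le\; 8\e(x) \sum_{i=|B(x,\tau/8)|+1}^{|B(x,5\tau/8)|} \frac{1}{i} \;\le\; 8\e(x)\log\!\left(\frac{e\,|B(x,\tfrac{5}{8}\tau)|}{|B(x,\tfrac{1}{8}\tau)|}\right),
\]
and plugging in the prescribed $\e(x) = \bigl(32\log(e|B(x,5\tau/8)|/|B(x,\tau/8)|)\bigr)^{-1}$ makes the right-hand side at most $1/2$, yielding $\delta = 1/2$ as required.

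The main thing to be careful about is the decoupling step in bounding $\Pr[C_i]$: the radius $R$ and the permutation $\pi$ are independent, but the natural description of ``$y_i$ is $\pi$-minimal among the settling candidates'' depends on $R$ through the radius $R+r$. The clean way to handle this is to observe that for any realization of $R$ for which $C_i$ can occur one has $d(x,y_i) \le R+r$, so the candidate set contains $y_1,\ldots,y_i$, whence the conditional probability of the $\pi$-minimality event is at most $1/i$ uniformly in $R$; then one integrates out $R$. Once this is in place, the rest is the accounting above.
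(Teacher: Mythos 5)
Your proposal is correct and follows essentially the same route as the paper: the Calinescu--Karloff--Rabani partition with $R$ uniform in $[\tau/4,\tau/2)$ and a random permutation, the ``settle/cut'' analysis yielding a harmonic sum over indices between $|B(x,\tau/8)|$ and $|B(x,\tfrac58\tau)|$, and the same handling of the $R$--$\pi$ decoupling. Your constant ($8\e(x)$ rather than the paper's $16\e(x)$) is in fact slightly sharper and still gives failure probability at most $1/2$, so the lemma follows as stated.
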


\begin{proof} Suppose that $|X|=n$ and let $\pi : [n] \to X$ be a uniformly random bijection. Choose $R \in [\frac{\tau}{4}, \frac{\tau}{2})$ uniformly at random.

Let $\cP$ be the random partition constructed by iteratively cutting out the balls $B(\pi(1), R), B(\pi(2), R), \ldots, B(\pi(n),R)$. In other words, $\cP = \{S_1, S_2, \ldots, S_n\}$ where \[ S_i = B(\pi(i),R) \setminus (S_1 \cup \cdots \cup S_{i-1})\,. \]

Fix a number $\e \leq 1/8$ and a point $x \in X$. Let $T \in [n]$ denote the smallest index for which $d(\pi(T), x) \leq \e \tau + R$. Then we have \begin{equation}\label{eq:badevent} \Pr[B(x,\e \tau) \nsubseteq \cP(x)] \leq \Pr[d(\pi(T), x) \ge R-\e\tau]\,. \end{equation} For $y \in X$, define the interval $I(y) = \left[d(x,y) - \e\tau, d(x,y) +  \e \tau\right)$. Note that the bad event $\{ d(\pi(T),x) > R - \e\tau \}$ is the same as the event $\{R \in I(\pi(T)) \}$.

Order the points of $X$ in non-decreasing order from $x$:  $x_0=x,x_1, \ldots, x_n$. Then \eqref{eq:badevent} yields \begin{align} \Pr[B(x,\e \tau) \nsubseteq \cP(x)] &\leq \Pr[R \in I(\pi(T))]  \nonumber \\
 &= \sum_{j=1}^n \Pr\left[R \in I(x_j)\right] \cdot \Pr[x_j = \pi(T) \mid R \in I(x_j)] \nonumber \\
 &\leq 16\e \sum_{j=|B(x, \tau/8)|+1}^{|B(x, \frac{5}{8}\tau)|} \Pr[x_j = \pi(T) \mid R \in I(x_j)] \label{eq:bounds} \\
 &\leq 16 \e \sum_{j=|B(x, \tau/8)|+1}^{|B(x, \frac{5}{8}\tau)|} \frac{1}{j} \label{eq:harmonic} \\
 &\leq 16 \e \log \left(\frac{e |B(x,\frac{5}{8} \tau)|}{|B(x,\frac{1}{8}\tau)|}\right)\,.\nonumber
\end{align}

Inequality  \eqref{eq:bounds} arises from the fact that the length of $I(x_j)$ is $4\e\tau$ and $R$ is chosen uniformly from an interval of length $\tau/4$ and  that if $d(x,y) \leq \tau/8$ or $d(x,y) > \frac{5}{8} \tau$, then $\Pr(R\in I(y)) = 0$. Finally, to confirm \eqref{eq:harmonic}, note that \[R \in I(x_j) \implies R > d(x,x_j) - \e\tau \implies R > d(x,x_i) - \e\tau \textrm{ for } i \leq j\,.\] In particular, conditioned on $R \in I(x_j)$, the event $x_j = \pi(T)$ can only happen if $x_j$ is chosen first from $\{x_1, \ldots, x_j\}$ in the permutation $\pi$.

Setting $\e$ as in \eqref{eq:seteps} completes the proof. \end{proof}

\section{Diffusive estimates} \label{sec:folner}

In order to apply the techniques of the preceding section, we need to reduce our main diffusive estimate (\pref{thm:goodtimes}) to a statement about the random walk restricted to finite subgraphs in $(G,\rho)$. In \pref{sec:mtp}, we use the mass transport principle to show that it suffices to control the speed of the random walk on an appropriate sequence of balls $\left\{B_{\rho}^G(r_n)\right\}$ in $G$.

In \pref{sec:choosing}, we argue that this is possible, conditioned on $(G,\rho)$, as long as there are good enough bounds on the average growth rate of balls $\{B^G_x(r) \subseteq B_{\rho}^G(r_n)\}$, where the average is taken over the stationary measure of the random walk restricted to $B_{\rho}^G(r_n)$. Finally, in \pref{sec:SRG}, we show that the weakly annealed polynomial growth property shows yields an infinite sequence of radii $\{r_n\}$ such that the average growth is controlled with high probability over the choice of $(G,\rho)$. This allows us to complete the proof of \pref{thm:goodtimes}.

\subsection{The mass transport principle} \label{sec:mtp}

We now return to the setting where $(G,\rho)$ is a stationary random graph with vertex set $V_G$. For a subset $S \subseteq V_G$, define $\mu_G(S) = \sum_{x \in S} \deg_G(x)$.

In order to establish \pref{thm:goodtimes}, we employ an unpublished result of Russ Lyons  that every stationary random graph of (weakly) annealed subexponential growth is actually a reversible random graph.  For completeness, we indicate a proof at the end of this section.

In particular, we can assume that $(G,\rho)$ satisfies a mass transport principle (see, e.g., the extensive reference \cite{aldous-lyons}
or the discussion in \cite{BC12}): For every positive functional $F(G,\rho,x)$, it holds that \begin{equation}\label{eq:mtp} \E\left[\frac{1}{\deg_G(\rho)} \sum_{x \in V_G} F(G,\rho,x)\right] = \E\left[\frac{1}{\deg_G(\rho)} \sum_{x \in V_G} F(G,x,\rho)\right]\,. \end{equation}

Consider an event $\cA$ in $\cG_{\bullet}$ (depending only on the isomorphism classes of finite rooted subgraphs).

\begin{lemma}\label{lem:locals} For any $R \geq 1$, it holds that, \[ \E\left[\frac{\mu_G(B^G_{\rho}(R))}{\mu_G(B^G_{\rho}(2R))} \cA(G,\rho)\right] \leq \E\left[\frac{\mu_G\left(\left\{\vphantom{\bigoplus} x \in B^G_{\rho}(R) : \1_{\cA}(G,x)\right\}\right)}{\mu_G(B^G_{\rho}(R))}\right]\,. \]
\end{lemma}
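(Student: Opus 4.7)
The plan is to apply the mass transport principle \eqref{eq:mtp} with a carefully chosen functional that encodes the left-hand side on the ``outgoing'' side of the transport, and whose ``incoming'' side dominates the right-hand side after a triangle-inequality comparison of ball masses.

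Concretely, I would define
\[
F(G,x,y) = \1_{\cA}(G,x)\cdot \deg_G(x)\cdot \frac{\deg_G(y)}{\mu_G\bigl(B^G_x(2R)\bigr)}\cdot \1_{\{y\in B^G_x(R)\}}\,.
\]
Evaluating the outgoing sum at the root,
\[
\frac{1}{\deg_G(\rho)}\sum_{x\in V_G} F(G,\rho,x) \;=\; \1_{\cA}(G,\rho)\cdot \frac{1}{\mu_G(B^G_\rho(2R))}\sum_{x\in B^G_\rho(R)}\deg_G(x) \;=\; \1_{\cA}(G,\rho)\cdot\frac{\mu_G(B^G_\rho(R))}{\mu_G(B^G_\rho(2R))}\,,
\]
which is precisely the expression inside the expectation on the LHS of the lemma.

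For the incoming side, note that symmetry of $d_G$ gives $\rho\in B^G_x(R)\iff x\in B^G_\rho(R)$, and for any such $x$ the triangle inequality yields $B^G_\rho(R)\subseteq B^G_x(2R)$, hence $\mu_G(B^G_x(2R))\geq \mu_G(B^G_\rho(R))$. Therefore
\[
\frac{1}{\deg_G(\rho)}\sum_{x\in V_G} F(G,x,\rho) \;=\; \sum_{x\in B^G_\rho(R)} \1_{\cA}(G,x)\cdot \frac{\deg_G(x)}{\mu_G(B^G_x(2R))} \;\leq\; \frac{\mu_G\bigl(\{x\in B^G_\rho(R):\1_{\cA}(G,x)\}\bigr)}{\mu_G(B^G_\rho(R))}\,.
\]
Taking expectations and invoking \eqref{eq:mtp} to equate the two sides before the inequality yields the claim. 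Stationarity is not enough on its own to write down \eqref{eq:mtp}, so I would first cite Lyons' unpublished result (referenced immediately above the lemma) that stationary random graphs of weakly annealed subexponential growth are in fact reversible, which is what licenses the use of the mass transport principle.

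The only mildly delicate point is choosing the normalization $\mu_G(B^G_x(2R))$ in $F$: it is what forces the outgoing side to produce the ratio $\mu_G(B^G_\rho(R))/\mu_G(B^G_\rho(2R))$, while the doubling of the radius from $R$ to $2R$ is exactly what the triangle inequality requires to compare $\mu_G(B^G_x(2R))$ with $\mu_G(B^G_\rho(R))$ on the incoming side. Beyond this choice, the argument is a short calculation. This also matches the paper's comment that the mass transport step uses polynomial growth at a scale where $\mu_G$ is doubling, since later \pref{lem:locals} will be applied with $R$ and $2R$ chosen so that the prefactor $\mu_G(B^G_\rho(R))/\mu_G(B^G_\rho(2R))$ is not too small.
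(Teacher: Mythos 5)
Your proof is correct and is essentially the paper's argument in mirror image: the paper transports mass with $F(G,\rho,x)=\deg_G(\rho)\frac{\deg_G(x)}{\mu_G(B^G_{\rho}(R))}\1_{B^G_{\rho}(R)}(x)\1_{\cA}(G,x)$, so that the outgoing side equals the right-hand side exactly and the triangle inequality $B^G_x(R)\subseteq B^G_{\rho}(2R)$ is applied on the incoming side, whereas you place the event and the $2R$-normalization on the sender and apply the equivalent inclusion $B^G_{\rho}(R)\subseteq B^G_x(2R)$ on the receiving side. Both are the same one-line mass-transport computation, and your remark that reversibility (via Lyons' result) is what licenses \eqref{eq:mtp} matches the paper's setup.
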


\begin{proof} Define a mass transportation: \[ F(G,\rho,x) = \deg_G(\rho) \frac{\deg_G(x)}{\mu_G(B^G_{\rho}(R))} \1_{B^G_{\rho}(R)}(x) \,\1_{\cA}(G,x)\,. \] Observe that, \begin{align*} \E\left[\frac1{\mu_G(B^G_{\rho}(R))} \sum_{x \in B^G_{\rho}(R)} \deg_G(x) \1_{\cA}(G,x)\right] &= \E\left[\frac{1}{\deg_G(\rho)} \sum_{x \in V_G} F(G,\rho,x) \right] \\ &\stackrel{\mathclap{\eqref{eq:mtp}}}{=} \,\E\left[\frac{1}{\deg_G(\rho)} \sum_{x \in V_G} F(G,x,\rho)\right] \\ &= \E\left[\sum_{x \in B^G_{\rho}(R)} \frac{\deg_G(x)}{\mu_G(B^G_x(R))} \1_{\cA}(G,\rho)\right] \\ &\geq \E\left[\frac{\mu_G(B^G_{\rho}(R))}{\mu_G(B^G_{\rho}(2R))} \cA(G,\rho)\right], \end{align*} where the last line follows from the fact that $x \in B^G_{\rho}(R) \implies \mu_G(B^G_x(R)) \leq \mu_G(B^G_{\rho}(2R))$. \end{proof}

The following theorem,
along with the mass transport principle, implies \pref{thm:goodtimes}. Its proof occupies \pref{sec:choosing} and \pref{sec:SRG}.

\begin{theorem}\label{thm:folner-version} Suppose that $(G,\rho)$ is a stationary random graph of weakly annealed polynomial growth. Then there is a constant $C$ depending only on the growth constants of $(G,\rho)$ (cf. \eqref{eq:poly-growth}) and an infinite (deterministic) sequence of times $\goodtimes$ and radii $\{r_n : n \in \goodtimes\}$ such that the following conditions hold: \begin{enumerate} \item \label{cond1} For every $\e > 0$ and all $n \in \goodtimes$ with $n \geq (1/\e)^4$, it holds that, \[ \E\left[\frac{\mu_G\left(\vphantom{\bigoplus}\left\{ x \in B^G_{\rho}(r_n) : \E[d_G(X_0, X_n)^2 \mid X_0 = x, (G,\rho)] \geq (C\e^{-13}) n\right\}\right)}{\mu_G(B^G_{\rho}(r_n))}\right] \leq \e\,, \] \item \label{cond2} $\left(H_n - H_{n-1}\right)n \leq C,$ \item \label{cond3}
$\E\left[\log \frac{\mu_G(B^G_{\rho}(2r_n))}{\mu_G(B^G_{\rho}(r_n))}\right] \leq C\,.$
\end{enumerate} \end{theorem}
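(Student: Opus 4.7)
My plan is to establish each of the three conditions in turn, with condition (1) being the technical core, and with the radius $r_n$ (which appears in both conditions (1) and (3)) requiring a coordinated choice.

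For condition (2), weakly annealed polynomial growth gives $H_n \leq \E[\log|B^G_\rho(n)|] \leq s \log n + c$, since at time $n$ the walk is supported on $B^G_\rho(n)$ and entropy is bounded above by the log of the support size. Telescoping within a dyadic block $[2^k, 2^{k+1})$ yields $\sum_{n=2^k}^{2^{k+1}-1}(H_n - H_{n-1}) = H_{2^{k+1}-1} - H_{2^k-1} \leq O(1)$, so by pigeonhole some $n$ in this block satisfies $(H_n - H_{n-1})\cdot n \leq O(1)$. Selecting one such $n$ per dyadic block produces an infinite sequence $\goodtimes$ of candidate times satisfying condition (2); crucially, condition (2) does not involve $r_n$, so $\goodtimes$ can be fixed before choosing the radii.

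For each $n \in \goodtimes$, I then choose $r_n$ to satisfy both conditions (1) and (3). Condition (3) alone is easy: the telescoping $\sum_j \E[\log(\mu_G(B^G_\rho(2^{j+1}))/\mu_G(B^G_\rho(2^j)))]$ is controlled by the total annealed log-growth, so dyadic pigeonhole over scales produces many radii at which the expected doubling ratio is at most $C$. For condition (1), I apply \pref{lem:locals} to reduce the required bound to a speed estimate for the stationary random walk $\{Z_t\}$ restricted to $S = B^G_\rho(r_n)$. The embeddings from \pref{lem:embed} combined with \pref{lem:rwcontrol} then reduce $\E[d_G(Z_0, Z_{2n})^2] \leq O(n)$ to producing a function $\f$ such that (i) $(Z_0, Z_{2n}) \in \cG(\alpha_n, \f)$ with high probability, and (ii) the subgaussian sum $\sum_{k=\alpha_n}^{\beta_n} 8^{2k} \exp\bigl(-8^{2k}/(32 n \f(k)^2)\bigr)$ is $O(n)$. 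The natural choice takes $\f(k)$ comparable to $1 + \phi^G_x(k)$ for typical $x \in S$, so the critical requirement is that the profile $\{\phi^G_x(k)\}$ be controlled for most $x \in B^G_\rho(r_n)$ across the $\approx \log n$ scales $k \in [\alpha_n, \beta_n]$.

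The main obstacle is securing this profile control across so many consecutive scales while maintaining condition (3). Polynomial growth does not guarantee uniform doubling across $\log n$ consecutive scales; however, the subgaussian tail in \pref{lem:rwcontrol} grants flexibility, allowing $\f(k)$ to grow moderately with $k$ as long as $\f(k)^2 \leq O(8^{2k}/(n \log n))$. The strategy is to use the bound $\E[\log|B^G_\rho(r)|] \leq s\log r + c$ to control the cumulative expected profile along dyadic scales, then to invoke the mass transport principle a second time to transfer profile statistics from a typical $x \in B^G_\rho(r_n)$ back to statistics at $\rho$, where the annealed hypothesis directly applies. Averaging the annealed profile over a range of candidate radii and applying Chebyshev/Markov-type inequalities then produces, for each $n \in \goodtimes$, a radius $r_n$ at which condition (3) holds and the required growth bounds hold with probability at least $1-\e$ over $(G,\rho)$. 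Translating the resulting restricted-walk speed estimate back into condition (1) (with the specified $\e^{-13}$ dependence) is the final step via \pref{lem:locals}. This averaging procedure — finding a single scale at which $\log n$ consecutive growth increments are jointly well-behaved with high probability — is the content of \pref{sec:choosing} and \pref{sec:SRG}.
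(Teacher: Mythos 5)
Your high-level strategy is the right one (reduce condition (1) to the restricted walk via \pref{lem:locals}, \pref{lem:embed} and \pref{lem:rwcontrol}, and exploit the subgaussian tail to tolerate moderately growing profiles across the $\approx \log n$ scales), but there are two genuine gaps. The first is the order of quantifiers in your selection of times and radii. You fix $\goodtimes$ from condition (2) alone and then, for each $n \in \goodtimes$, try to find a radius $r_n$. But temperedness of the growth profile is a statement about the \emph{specific} window of scales $k \in \{\alpha_n,\ldots,\beta_n\}$ determined by $n$: one needs $\E[\theta^G_{\rho,r}(\ell)] \leq O(s)$ for the particular $\ell$ with $n \in [8^{2\ell},8^{2\ell+2}]$, where $\theta^G_{\rho,r}(\ell) = \sum_{k=\ell}^{3\ell}\bar\phi^G_{\rho,r}(k)2^{\ell-k}$. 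The annealed hypothesis only gives $\E[\sum_{\ell=k_0}^{2k_0}\theta^G_{\rho,r}(\ell)] \leq O(k_0 s)$ via telescoping in the scale variable, so pigeonhole produces \emph{some} good $\ell$ (hence some good time window), not every $\ell$; for a fixed $\ell$ the quantity $\E[\theta^G_{\rho,r}(\ell)]$ can be as large as $\Theta(k_0 s)$ for every admissible $r$ (the growth can be concentrated exactly at the scales $[8^\ell,8^{3\ell}]$), and averaging over $r$ does not help since there is no telescoping in $r$. The paper therefore selects $(k,r,\ell,n)$ \emph{jointly}, by averaging a single functional $\Psi_{(G,\rho)}(k_0)$ that bundles the doubling ratio, the annulus ratio, the $\theta$-functionals, and the entropy increments; the time $n$ comes out of this joint pigeonhole, not before it. (Relatedly, your telescoping bound $\sum_{n=2^k}^{2^{k+1}-1}(H_n-H_{n-1}) = H_{2^{k+1}-1}-H_{2^k-1} \leq O(1)$ is not justified: the right-hand side is only $O(k)$ from $H_m \leq s\log m + c$; one gets $O(1)$ only for infinitely many dyadic blocks, by a global argument, not for all of them.)

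The second gap is that condition (1) concerns the \emph{unrestricted} walk $\{X_t\}$, while \pref{lem:rwcontrol} controls the restricted walk $\{Z_t\}$ on $S = B^G_{\rho}(r_n)$. To couple the two for most starting points one needs the ``insulation'' condition \eqref{good3}: the annulus $B^G_{\rho}(r_n)\setminus B^G_{\rho}(r_n-2n)$ must have relative $\mu_G$-measure $O(\e)$. This is a much finer requirement than the doubling bound in condition (3) (the annulus has width $2n \ll r_n$), it is a separate constraint on $r_n$, and it is the reason the paper's functional $\Psi$ contains the terms $\log\bigl(\mu_G(B^G_{\rho}(r))/\mu_G(B^G_{\rho}(r-8^{4k_0+3}))\bigr)$ averaged over $r \in I(k)$. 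Your proposal never addresses this transfer. Two smaller remarks: the per-point control needed for $\1_{\cG(\alpha_n,\f)}$ follows from the averaged profile by Markov's inequality applied with geometrically increasing thresholds $\lambda 2^{k-\alpha_n}$ and a geometric union bound (the sets $S^{\uparrow}_{\lambda}(\alpha_n)$ in the paper), which you gesture at but do not carry out; and your proposed second application of the mass transport principle is unnecessary, since the averaged profile over $x \in B^G_{\rho}(r)$ telescopes directly to $\log|B^G_{\rho}(8^{5k_0}+r)|$, to which the annealed hypothesis applies.
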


We finish off this section with the proof of  \pref{thm:goodtimes}.

\begin{proof}[Proof of \pref{thm:goodtimes}] Fix $\delta > 0$ and apply \pref{thm:folner-version}. Applying Markov's inequality to (3) yields \begin{align*}
\Pr\left[\log \frac{\mu_G(B^G_{\rho}(2r_n))}{\mu_G(B^G_{\rho}(r_n))} \leq \frac{C}{\delta}\right] &\geq 1-\delta\,. \end{align*}

We now lower bound the probability that the random walk started from the root is at most diffusive.  Note that \pref{thm:folner-version} asserts
 this for the majority of the points in $B^G_{\rho}(r_n).$ To transfer this to the root, we use the mass transport principle.

To apply \pref{lem:locals}, we define the set $\cA$ of rooted graphs such that \[\mathbf{1}(\cA)=\mathbf{1}\left(\left\{ (G,\rho) : {\E}_{X_n}[d_G(\rho, X_n)^2 ] \geq C' n\right\}\right)\] for some constant $C'$ which is specified below.
Using \pref{lem:locals} in conjunction with \eqref{cond1} yields $$e^{-C/\delta}\Pr\left(\left\{\log \frac{\mu_G(B^G_{\rho}(2r_n))}{\mu_G(B^G_{\rho}(r_n))} \le \frac{C}{\delta}\right\} \wedge \cA \right)\le \e. $$ Thus by union bound, \begin{align*} \Pr(\cA)  \le e^{c/\delta} \e +\Pr\left(\log \frac{\mu_G(B^G_{\rho}(2r_n))}{\mu_G(B^G_{\rho}(r_n))} \ge \frac{C}{\delta}\right) \le e^{c/\delta} \e+ \delta. \end{align*}

Choosing $\e = \delta e^{-C/\delta},$ yields that for some $C' \leq e^{14C/\delta}$, and for all $n \in \goodtimes$ sufficiently large, \[ \Pr\left(\vphantom{\bigoplus}\E[d_G(X_0, X_n)^2 \mid (G,\rho)] \geq C' n\right) \leq \delta + e^{C/\delta} \delta e^{-C/\delta} \leq 2\delta\,. \]

Therefore it holds that for all $n \in \goodtimes$ sufficiently large, $H_n - H_{n-1} \leq \frac{C}{n}$ and, \[ \Pr\left[\E[d_G(X_0,X_n)^2 \mid (G,\rho)] \geq C' n \right] \leq 3\delta\,, \] yielding the desired result. \end{proof}

\subsubsection{Subexponential growth and reversibility}
We now prove the following unpublished result of Russ Lyons.

Recall that $(G,\rho)$ is stationary if $(G,X_0) \stackrel{\textrm{law}}{=} (G,X_1)$
where $\{X_n\}$ is the random walk on $G$ with $X_0=\rho$.
The random graph $(G,\rho)$ is said to be {\em reversible} if $(G,X_0,X_1) \stackrel{\textrm{law}}{=} (G,X_1,X_0)$.

\begin{theorem}
If $(G,\rho)$ is a stationary random graph such that
\begin{equation}\label{eq:subexp}
\lim_{n \to \infty} \frac{\E[\log |B_{\rho}^G(n)|]}{n} \to 0\,,
\end{equation}
then $(G,\rho)$ is reversible.
\end{theorem}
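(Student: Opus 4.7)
The plan is to identify a natural bounded harmonic function on $G$ whose constancy is equivalent to reversibility, and then force it to be constant using the subexponential growth assumption. Writing $\nu_{|G}$ for the Palm conditional law of the root given the (unrooted) graph, unpacking the stationarity identity $(G,\rho) \stackrel{\textup{law}}{=}(G,X_1)$ gives, for a.e.\ $G$,
\[
  \nu_{|G}(y) \;=\; \sum_{x \sim y} \frac{\nu_{|G}(x)}{\deg_G(x)} \qquad\text{for all } y \in V_G\,,
\]
which is the statement that $g_G(x) := \nu_{|G}(x)/\deg_G(x)$ is harmonic on $G$. Since $\nu_{|G}(x) \leq 1$ and $\deg_G(x) \geq 1$, the function $g_G$ is bounded by $1$. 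A direct computation of the time-reversed transition shows that reversibility of $(G,\rho)$ is equivalent to $g_G$ being constant on the (a.s.\ unique) connected component of $G$.

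To force constancy, I would run the ``martingale with constant marginals'' argument. Conditionally on $G$, the process $(g_G(X_n))_{n\geq 0}$ is a bounded martingale, because $g_G$ is harmonic. By stationarity, $(G, X_n) \stackrel{\textup{law}}{=}(G, X_0)$, and disintegrating over $G$ gives $X_n \mid G \stackrel{\textup{law}}{=} X_0 \mid G$ for a.e.\ $G$, so
\[
  \E[g_G(X_n)^2 \mid G] \;=\; \E[g_G(X_0)^2 \mid G]\,.
\]
Martingale orthogonality then implies $\E[(g_G(X_{n+1}) - g_G(X_n))^2 \mid G] = 0$ for every $n$, so $g_G$ is constant along the walk and hence constant on the connected component of $X_0$.

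The main obstacle, and where the subexponential growth hypothesis \eqref{eq:subexp} genuinely enters, is that the disintegration $\nu_{|G}$ is naturally a measure on the orbit space $V_G/\mathrm{Aut}(G)$ rather than on $V_G$ itself, and in infinite graphs with $\sum \deg_G = \infty$ it cannot in general be promoted to a genuine probability measure on $V_G$ via uniform selection inside orbits. The way to make the martingale argument go through is to work with the bi-infinite stationary walk $(X_n)_{n \in \mathbb Z}$ together with its time-reversal and quantify any discrepancy via the per-step KL divergence $\delta := D(p^{\mathrm{for}} \| p^{\mathrm{back}})$ on the Markov chain of rooted graphs. The Markov chain rule gives $D(\mu_n \| R_*\mu_n) = n\delta$, while the bound $H(X_n \mid G, X_0) \leq \log|B^G_\rho(n)|$ combined with the stationarity-driven identity $H(X_n \mid G, X_0) = H(X_{-n} \mid G, X_0)$ and data processing should give $D(\mu_n \| R_*\mu_n) \leq C\, \E[\log |B^G_\rho(n)|] = o(n)$. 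This forces $\delta = 0$, which is exactly reversibility, and the martingale calculation above then recovers the concrete conclusion that $g_G$ is a.s.\ constant. The delicate point is establishing the $o(n)$ upper bound; this is where the hypothesis $\E[\log|B^G_\rho(n)|]/n \to 0$ is indispensable, since the growth assumption is precisely what prevents a nontrivial ``entropy production'' in the sense of the time-reversed chain.
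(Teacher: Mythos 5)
Your second argument---the one you actually lean on---is essentially the paper's proof. The paper works with the Radon--Nikodym derivative $\Delta(G,x,y)=\frac{d\mu_{\leftarrow}}{d\mu_{\rightarrow}}(G,x,y)$ between the laws of $(G,X_1,X_0)$ and $(G,X_0,X_1)$; your per-step KL divergence $\delta$ is exactly $-\E[\log\Delta(G,X_0,X_1)]$, your chain-rule identity $D(\mu_n\,\|\,R_*\mu_n)=n\delta$ is the cocycle identity $\log\Delta(G,X_0,X_n)=\sum_{t=0}^{n-1}\log\Delta(G,X_t,X_{t+1})$ together with stationarity, and the conclusion $\delta=0\Rightarrow$ reversibility is the Jensen equality case. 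So the architecture matches. Two caveats. First, your opening harmonic-function argument is a dead end, and not only for the technical reason you give: as written it never uses the growth hypothesis, so if the Palm disintegration $\nu_{|G}$ existed as a probability measure on $V_G$ and the martingale computation went through, you would have proved that \emph{every} stationary random graph is reversible, which is false (non-reversible stationary random graphs exist; they necessarily have exponential growth, which is precisely what this theorem quantifies). The obstruction you name is thus essential, not a removable technicality, and that part of the plan should be discarded rather than ``fixed.''

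Second, the one step you wave at---``data processing should give $D(\mu_n\,\|\,R_*\mu_n)\leq C\,\E[\log|B^G_\rho(n)|]$''---is the crux, and the identity $H(X_n\mid G,X_0)=H(X_{-n}\mid G,X_0)$ by itself does not deliver it. The paper's route is: by stationarity $\Pr[(G,X_0,X_n)\in A]=\Pr[(G,X_n,X_{2n})\in A]\geq \Pr(X_{2n}=X_0\mid X_n)\,\Pr[(G,X_n,X_0)\in A]$, which gives the pointwise bound $\Delta(G,X_0,X_n)\geq p_G^n(X_n,X_0)=p_G^n(X_0,X_n)\frac{\deg_G(X_0)}{\deg_G(X_n)}$; taking $\log$ and expectations yields $\E[\log\Delta(G,X_0,X_n)]\geq -\E[H(X_n\mid(G,X_0))]\geq -\E[\log|B^G_{X_0}(n)|]$, where the degree terms cancel by stationarity (this requires $\E[\log\deg_G(\rho)]<\infty$, which follows from the growth hypothesis and is also needed to justify the additivity-in-expectation of the cocycle). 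Dividing by $n$ and letting $n\to\infty$ then forces $\E[\log\Delta(G,X_0,X_1)]\geq 0$, hence $\Delta\equiv 1$ a.s. You should supply this comparison of $(G,X_0,X_n)$ with $(G,X_n,X_{2n})$ and the return-probability conditioning explicitly; without it the $o(n)$ upper bound on the $n$-step divergence is unproved.
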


This result was proved earlier in \cite{BC12} with the additional assumption that $\deg_G(\rho) \leq O(1)$
almost surely.

\begin{proof}
We will borrow heavily from \cite[Ch. 4]{BC12}.  The reader
is encouraged to consult that paper for more detailed explanations.
Let $\mu_{\rightarrow}$ and $\mu_{\leftarrow}$ denote the laws of
$(G,X_0,X_1)$ and $(G,X_1,X_0)$, respectively.  For a fixed graph $G_0=(V_0,E_0)$ and $\{x,y\} \in E_0$,
we denote the Radon-Nikodym derivative
\[
\Delta(G_0, x,y) \seteq \frac{d\mu_{\leftarrow}}{d\mu_{\rightarrow}} (G_0,x,y)\,.
\]
One can extend this to pairs $x,y \in V_0$ which are not necessarily adjacent:  Consider any path $x=x_0,x_1,\ldots,x_n=y$ and define
\[
\Delta(G_0, x,y) \seteq \prod_{i=0}^{n-1} \Delta(G_0,x_i,x_{i+1})\,.
\]
This value is independent of the path between $x$ and $y$ (see \cite[Lem. 4.2]{BC12}; this is a manifestation
of the fact that a cycle and its reverse have the same probability under random walk on a graph).

Note that, because of this, for pairs $x,y \in V_0$ such that $\Pr[X_n = y \mid X_0 = x] > 0$, it holds that
\begin{equation}\label{eq:rd}
\Delta(G_0, x,y) = \frac{d\mu_{n,\leftarrow}}{d\mu_{n,\rightarrow}} (G_0,x,y)\,,
\end{equation}
where $\mu_{n,\rightarrow}$ and $\mu_{n,\leftarrow}$ are the laws of $(G,X_0,X_n)$ and $(G,X_n,X_0)$, respectively.
(This equality only makes sense up to sets of $\mu_{\leftarrow}$-measure zero.)

\medskip

One has $\E[\Delta(G,X_0,X_1)] = 1$,
and moreover Jensen's inequality shows that
\begin{equation}\label{eq:equivalence}
\E[\log \left(\Delta(G,X_0,X_1)\right)] \geq 0 \iff \Delta(G,X_0,X_1)=1 \textrm{ a.s.} \iff (G,X_0,X_1) \textrm{ is reversible.}
\end{equation}
Let $\cG_{\bullet\bullet}$ denote the set of isomorphism classes
of bi-rooted graphs.  Then
for any Borel set $A \subseteq \cG_{\bullet \bullet}$ and $n \geq 0$, stationarity yields
\begin{equation}\label{eq:probs}
\Pr\left[(G,X_0,X_n) \in A\right] = \Pr\left[(G,X_n,X_{2n}) \in A\right] \geq \Pr(X_{2n}=X_0 \mid X_n) \Pr\left[(G,X_n,X_0) \in A\right]\,.
\end{equation}
Let $p_G^n$ denote the $n$-step transition kernel in $G$.
Using \eqref{eq:probs} in \eqref{eq:rd} implies that almost surely:
\begin{equation}\label{eq:xoon}
\Delta(G,X_0,X_n) \geq \Pr\left(X_{2n}=X_0 \mid X_n, G\right) = p_G^n(X_n,X_0) = p_G^n(X_0,X_n) \frac{\deg_G(X_0)}{\deg_G(X_n)}\,.
\end{equation}

Therefore almost surely,
\begin{align*}
\E[\log \Delta(G,X_0,X_n) \mid G, X_0] &\geq \sum_{y} p_G^n(X_0,y) \left(\log p_G^n(X_0,y)+ \log \frac{\deg_G(X_0)}{\deg_G(y)}\right) \,.
\end{align*}
Note that \eqref{eq:subexp} implies $\E[\log \deg_G(\rho)] < \infty$.
Therefore taking expectations and
again employing stationarity yields
\begin{equation}\label{eq:entropy}
\E[\log \Delta(G,X_0,X_n)] \geq -\E\left[H\left(X_n \mid (G,X_0)\right)\right] \geq -\E\left[\log |B_{X_0}^G(n)|\right],
\end{equation}
where $H(\cdot \mid (G,X_0))$ denotes the Shannon entropy conditioned on $(G,X_0)$.

Using again the cycle property \cite[Lem. 4.2]{BC12},
it holds that almost surely, for all $n \geq 0$,
\[
\log \left(\Delta(G,X_0,X_n)\right) = \sum_{t=0}^{n-1}\log \left(\Delta(G,X_t,X_{t+1})\right)\,.
\]
From \eqref{eq:xoon} and the fact that $\E[\log \deg_G(\rho)] < \infty$, we have
$\E |\log \Delta(G,X_0,X_1)| < \infty$.  Thus
using stationarity once more and combining this with \eqref{eq:entropy} yields
\[
\E[\log \Delta(G,X_0,X_1)] = \lim_{n \to \infty} \frac{\E[\log \Delta(G,X_0,X_n)]}{n} \geq \lim_{n \to \infty} \frac{-\E[\log |B_{X_0}^G(n)|]}{n} = 0\,,
\]
implying that $(G,X_0,X_1)$ is reversible (recall \eqref{eq:equivalence}).
\end{proof}

\subsection{Choosing a good F{\o}lner set} \label{sec:choosing}

Fix a rooted graph $(G,\rho)$ with vertex set $V_G$. The most difficult part of the proof of \pref{thm:folner-version} is verifying \eqref{cond1}. Toward this end, we will employ  \pref{lem:rwcontrol} and \pref{lem:embed} to control the random walk restricted to a subset of the vertices in $G$ whenever the local growth rates are sufficiently well-behaved. Consider a finite subset $S \subseteq V_G$.

Let $\{Z_t\}$ denote the stationary random walk restricted to $S$ (recall \pref{def:restricted}), and let $\pi$ denote the corresponding stationary measure. Recall the definition of $\phi^G_x(k)$ from \eqref{eq:phidef}. For $k,k_0 \in \N$, define the numbers \begin{align*} \bar \phi_S(k) &= \sum_{x \in S} \pi(x) \phi^G_x(k) \\ S_{\lambda}(k) &= \left \{ x \in S : \phi^G_x(k) \leq \lambda \bar \phi_S(k) \right\} \\ S^{\uparrow}_{\lambda}(k_0) &= \bigcap_{k\geq k_0} S_{\lambda 2^{k-k_0}}(k)\,. \end{align*}

Note that by Markov's inequality and a geometric summation, we have \begin{equation}\label{eq:upbound} \pi(S^{\uparrow}_{\lambda}(k_0)) \geq 1 - 2/\lambda\,. \end{equation}

\begin{lemma}\label{lem:many-scales} For all $n \geq 1$, it holds that \begin{equation}\label{eq:many-scales} \E\left[d_G(Z_{2n}, Z_0)^2 \cdot \1_{S^{\uparrow}_{\lambda}(\alpha_n)}(Z_0)\right] \leq 2n+   256 \sum_{k=\alpha_n}^{\beta_n} 8^{2k} \exp\left(\frac{-8^{2k}}{\lambda^2 4^{k-\alpha_n+10} (1+\bar \phi_S(k))^2 n}\right)\,. \end{equation} \end{lemma}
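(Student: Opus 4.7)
The plan is to combine \pref{lem:rwcontrol} with the family of single-scale embeddings $\{F_k\}$ from \pref{lem:embed}, choosing the distortion function $\f$ so that the event $\{Z_0 \in S^{\uparrow}_{\lambda}(\alpha_n)\}$ is contained in the good-pair event $\{(Z_0,Z_{2n}) \in \cG(\alpha_n,\f)\}$. The crucial feature we exploit is that the co-Lipschitz lower bound in \pref{lem:embed} depends only on $\phi^G_x(k)$ — that is, on the local growth at $x$ — so controlling $\phi^G_x(k)$ for \emph{one} endpoint of a pair $(x,y)$ is enough to place that pair into $\cG(\alpha_n,\f)$, regardless of $y$.

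Concretely, I would take $F_k$ as in \pref{lem:embed} and set
\[
\f(k) \seteq 128\,\lambda\, 2^{k-\alpha_n}\bigl(1+\bar\phi_S(k)\bigr)\qquad (k \geq \alpha_n).
\]
Assume WLOG $\lambda \geq 1$. If $x \in S^{\uparrow}_{\lambda}(\alpha_n)$, then for every $k \geq \alpha_n$ we have $\phi^G_x(k) \leq \lambda\, 2^{k-\alpha_n}\bar\phi_S(k)$ by definition of $S^{\uparrow}_{\lambda}(\alpha_n)$, and since $\lambda\,2^{k-\alpha_n} \geq 1$,
\[
1 + \phi^G_x(k) \;\leq\; 1 + \lambda\,2^{k-\alpha_n}\bar\phi_S(k) \;\leq\; \lambda\,2^{k-\alpha_n}\bigl(1+\bar\phi_S(k)\bigr).
\]
Combining with \pref{lem:embed}, for every $y \in V_G$ and every $k \geq \alpha_n$,
\[
d_G(x,y) \geq 8^k \;\Longrightarrow\; \|F_k(x)-F_k(y)\|_2 \geq \frac{8^k}{128\bigl(1+\phi^G_x(k)\bigr)} \geq \frac{8^k}{\f(k)}.
\]
Hence $\{Z_0 \in S^{\uparrow}_{\lambda}(\alpha_n)\} \subseteq \{(Z_0, Z_{2n}) \in \cG(\alpha_n,\f)\}$ pointwise, so
\[
\1_{S^{\uparrow}_{\lambda}(\alpha_n)}(Z_0) \leq \1_{\cG(\alpha_n,\f)}(Z_0, Z_{2n}).
\]

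Applying \pref{lem:rwcontrol} with this $\f$ (restricting the $F_k$'s to $S$) immediately yields
\[
\E\!\left[d_G(Z_{2n},Z_0)^2 \cdot \1_{S^{\uparrow}_{\lambda}(\alpha_n)}(Z_0)\right] \leq 2n + 256 \sum_{k=\alpha_n}^{\beta_n} 8^{2k}\exp\!\left(\frac{-8^{2k}}{32 n\,\f(k)^2}\right).
\]
It remains only to verify that $32 n \f(k)^2 \leq \lambda^2\,4^{k-\alpha_n+10}(1+\bar\phi_S(k))^2\,n$ so that the exponent absorbs into the target form. Indeed, $32 \cdot 128^2 = 2^{19} < 2^{20} = 4^{10}$, so
\[
32n\,\f(k)^2 \;=\; 2^{19}\,\lambda^2\, 4^{k-\alpha_n}\bigl(1+\bar\phi_S(k)\bigr)^2\, n \;\leq\; \lambda^2\, 4^{k-\alpha_n+10}\bigl(1+\bar\phi_S(k)\bigr)^2\, n.
\]
This gives the stated bound.

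The proof is essentially a plug-and-play combination of the two main technical results of \pref{sec:martingales} and \pref{sec:embed}, so I do not expect any serious obstacle. The only subtlety — and the conceptual point of the lemma — is the observation that the asymmetric, $x$-dependent lower bound in \pref{lem:embed} lets us trade control of the \emph{joint} event $\cG(\alpha_n,\f)$ for control of the \emph{marginal} event $\{Z_0 \in S^{\uparrow}_{\lambda}(\alpha_n)\}$, whose probability we can estimate unilaterally via Markov's inequality as in \eqref{eq:upbound}.
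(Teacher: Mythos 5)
Your proof is correct and follows the same route as the paper, whose entire proof is the one-line instruction to apply \pref{lem:rwcontrol} with the maps from \pref{lem:embed} and the functional $\f(k)=128(1+\lambda\bar\phi_S(k))$. In fact your choice $\f(k)=128\,\lambda\,2^{k-\alpha_n}(1+\bar\phi_S(k))$ is the more careful one: the $2^{k-\alpha_n}$ factor (absent from the paper's stated $\f$ but clearly intended, since it is what the definition of $S^{\uparrow}_{\lambda}(\alpha_n)$ and the $4^{k-\alpha_n+10}$ in \eqref{eq:many-scales} require) is needed for the containment $\{Z_0\in S^{\uparrow}_{\lambda}(\alpha_n)\}\subseteq\cG(\alpha_n,\f)$, and your explicit check that $32\cdot 128^2\le 4^{10}$ correctly accounts for the constants.
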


\begin{proof} Apply \pref{lem:rwcontrol} using the family of mappings that arises from applying \pref{lem:embed} to $G$, and with the functional $\f(k) = 128(1+\lambda \bar \phi_S(k))$. \end{proof}

Therefore control on $\bar \phi_S(k)$ for $k \in \{\alpha_n, \ldots, \beta_n\}$ yields control on the speed of $\{Z_t\}$. Let us define, for $r \geq 1$, \[ \bar{\phi}^G_{\rho,r}(k) \defeq \bar{\phi}_{B_{\rho}^G(r)}(k)\,, \] and chose $S = B_{\rho}(r)$ for some $r \geq 0$.

\begin{definition}[Tempered growth] \label{def:fertile} We say that a triple $(n,\lambda,r)$ is {\em tempered in $(G,\rho)$} if \begin{equation}\label{eq:fertile} \bar \phi^G_{\rho,r}(k) \leq \lambda 2^{k-\alpha_n} \textrm{ for } k \in \{\alpha_n, \ldots, \beta_n\}\,. \end{equation} For ease of reference, we recall the definitions from \eqref{eq:alphan}--\eqref{eq:betan}: \begin{align*} \alpha_n &= \left\lceil \log_8(\sqrt{2n})\right\rceil \\ \beta_n &= \lceil \log_8(2n)\rceil\,. \end{align*} \end{definition}

\begin{lemma}\label{lem:admissible} For all $n \geq 1$ and $\lambda \geq 2$ the following holds. If $(n,\lambda,r)$ is tempered in $(G,\rho)$ and $S=B_{\rho}(r)$, then \[\E\left[d_G(Z_{2n}, Z_0)^2 \cdot \1_{S^{\uparrow}_{\lambda}(\alpha_n)}(Z_0)\right] \leq O(\lambda^{12}) n\,. \] \end{lemma}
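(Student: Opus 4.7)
The plan is to plug the tempered-growth hypothesis directly into the bound provided by \pref{lem:many-scales} and extract the geometric sum that appears on the right-hand side. Since $\lambda \geq 2$ and \eqref{eq:fertile} gives $\bar\phi_S(k) \leq \lambda 2^{k-\alpha_n}$ throughout $k \in \{\alpha_n,\ldots,\beta_n\}$, I would use the trivial bound $1+\bar\phi_S(k) \leq 2\lambda\, 2^{k-\alpha_n}$, so that the exponent in \eqref{eq:many-scales} is bounded below by
\[
\frac{8^{2k}}{\lambda^2 \,4^{k-\alpha_n+10}(1+\bar\phi_S(k))^2 n} \;\geq\; \frac{2^{6k}}{c\,\lambda^{4}\,2^{4(k-\alpha_n)}\,n}
\]
for an absolute constant $c$. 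Writing $k = \alpha_n + j$ and using the defining bound $2n \leq 8^{2\alpha_n} \leq 128 n$ (from $\alpha_n = \lceil\log_8\sqrt{2n}\rceil$), the exponent becomes at least $4^{j}/(c'\lambda^{4})$ for another absolute constant $c'$, while the prefactor $8^{2k}$ is at most $128\, n\cdot 4^{3j}$.

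With this substitution the right-hand side of \eqref{eq:many-scales} reduces to
\[
2n + C_1\, n \sum_{j \geq 0} 4^{3j}\exp\!\left(-\frac{4^{j}}{c'\lambda^{4}}\right),
\]
for an absolute constant $C_1$, and the task becomes estimating this geometric-type sum. I would split at the threshold $J = \lceil \log_4(c'\lambda^{4})\rceil$, so that $4^{J}\asymp \lambda^{4}$. For $j \leq J$ the exponential factor is bounded by $1$, and the partial sum is dominated by its last term: $\sum_{j \leq J} 4^{3j} \leq 2\cdot 4^{3J} = O(\lambda^{12})$. For $j > J$, writing $u_j = 4^{j}/(c'\lambda^{4}) \geq 1$ we have $4^{3j} = (c'\lambda^{4})^{3} u_j^{3} = O(\lambda^{12}) u_j^{3}$, and $\sum_{j>J} u_j^3 e^{-u_j}$ converges rapidly since the $u_j$ grow geometrically along integers; its sum is $O(1)$, contributing another $O(\lambda^{12})$ factor.

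Adding the two pieces yields $\E[d_G(Z_{2n},Z_0)^2 \cdot \1_{S_\lambda^\uparrow(\alpha_n)}(Z_0)] \leq 2n + O(\lambda^{12})\, n = O(\lambda^{12})\, n$, which is the desired estimate. There is no serious obstacle here beyond careful bookkeeping of constants; the geometric-type sum with a doubly-exponential decay against a single-exponential growth is of a standard form. The one point to be slightly careful about is the threshold splitting: the $\lambda^{12}$ exponent is precisely what falls out of $4^{3J}$ where $4^{J}\asymp\lambda^{4}$, so any looser bound on $1+\bar\phi_S(k)$ would worsen the polynomial dependence on $\lambda$.
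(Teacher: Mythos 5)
Your proposal is correct and follows essentially the same route as the paper: apply \pref{lem:many-scales}, insert the tempered bound $\bar\phi_S(k)\le \lambda 2^{k-\alpha_n}$, reindex by $j=k-\alpha_n$ using $8^{2\alpha_n}\asymp n$, and bound the resulting sum $\sum_j 8^{2j}\exp(-4^j/(c'\lambda^4))$ by $O(\lambda^{12})$. The only difference is that you spell out the threshold splitting at $4^J\asymp\lambda^4$ that the paper leaves implicit in its final ``$\leq O(\lambda^{12})n$'' step; your bookkeeping there is accurate.
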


\begin{proof} To see this, apply \pref{lem:many-scales} and note that $8^{2\alpha_n} \leq O(n)$, hence \begin{align*} \sum_{k=\alpha_n}^{\beta_n} 8^{2k} \exp\left(\frac{-8^{2k}}{\lambda^2 4^{k-\alpha_n+10} (1+\bar \phi_S(k))^2 n}\right)&\le O(n)\sum_{k=\alpha_n}^{\beta_n} 8^{2k-2\alpha_n} \exp\left(\frac{-8^{2k-2\alpha_n}}{\lambda^2 4^{k-\alpha_n+10} (1+\lambda 2^{k-\alpha_n})^2}\right)\\ &\le O(n)\sum_{j=0}^{\infty} 8^{2j} \exp\left(\frac{-4^{j}}{9 \cdot 4^{10} \lambda^4 }\right)\\ &\leq O(\lambda^{12}) n\,.\qedhere \end{align*} \end{proof}

\medskip

To compare the (unrestricted) random walk $\{X_t\}$ on $G$ to the walk $\{Z_t\}$ restricted to $B^G_{\rho}(r)$, we will choose some $r \geq 0$ satisfying \begin{equation}\label{good3} \frac{\mu_G\left(B^G_{\rho}(r) \setminus B^G_{\rho}(r-2n)\right)}{\mu_G(B^G_{\rho}(r))} \leq \frac{1}{4\lambda}\,. \end{equation} In particular, this implies that for $\lambda \geq 1$, \begin{align} \pi\left(B^G_{\rho}(r) \setminus B^G_{\rho}(r-2n)\right) &\leq \frac{\mu_G(B^G_{\rho}(r) \setminus B^G_{\rho}(r-2n))}{\mu_G(B^G_{\rho}(r))-\mu_G\left(B^G_{\rho}(r) \setminus B^G_{\rho}(r-2n)\right)}\nonumber \\ &\stackrel{\mathclap{\eqref{good3}}}{\leq} \frac{1/4\lambda}{1-1/4\lambda} \nonumber \\ &\leq \frac{1}{2\lambda}\,. \label{good2} \end{align}

\begin{definition}[Insulation] \label{def:protected} We say that a triple $(n,\lambda,r)$ is {\em insulated in $(G,\rho)$} if \eqref{good3} holds. \end{definition}

Our final choice of $(n,\lambda,r)$ will satisfy some additional constraints,
 hence a complete description of the requirements is postponed to the next section.
However we already have the following.

\begin{lemma}\label{lem:unrestricted} For every $\lambda \geq 2$ the following holds. For any rooted graph $(G,\rho)$, if $(n,\lambda,r)$ is tempered and insulated in $(G,\rho)$, then for $S=B^G_{\rho}(r)$, it holds that \[\pi\left(\left\{\vphantom{\bigoplus}x \in S: \E[d_G(X_{2n},X_0)^2 \mid X_0=x] \geq  \lambda^{13} n\right\}\right) \leq O\left(\frac{1}{\lambda}\right)\,. \] \end{lemma}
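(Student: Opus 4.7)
\bigskip

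\textbf{Proof plan for \pref{lem:unrestricted}.} The plan is to decompose $S = B^G_\rho(r)$ into three pieces according to where the point $x$ lives, and to estimate the $\pi$-measure of each piece contributing to the bad event separately. Concretely, let $S' = B^G_\rho(r-2n)$ be the ``interior'' and $\partial S = S \setminus S'$ the boundary shell. Using insulation \eqref{good3} together with the derived bound \eqref{good2}, we immediately get $\pi(\partial S) \leq 1/(2\lambda)$, which absorbs any points in the boundary shell into the desired $O(1/\lambda)$ estimate regardless of the behavior of the walk started there.

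For $x \in S'$ the key observation is a coupling argument: since every step of the random walk can increase the distance to $\rho$ by at most one, starting from $x \in B^G_\rho(r-2n)$ the unrestricted walk $\{X_t\}$ remains inside $S$ up to time $2n$. On this event the restricted walk $\{Z_t\}$ (which only differs from $\{X_t\}$ when the latter tries to leave $S$) has the same law, so
\[
\E\left[d_G(X_{2n},X_0)^2 \mid X_0 = x\right] = \E\left[d_G(Z_{2n},Z_0)^2 \mid Z_0 = x\right] \qquad \forall x \in S'\,.
\]
Hence I can replace the question about $\{X_t\}$ by an identical question about $\{Z_t\}$ on $S'$.

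For the interior piece, I would further split according to whether $x$ belongs to $S^{\uparrow}_{\lambda}(\alpha_n)$ or not. Since $(n,\lambda,r)$ is tempered, \pref{lem:admissible} gives
\[
\sum_{x \in S} \pi(x) \cdot \1_{S^{\uparrow}_{\lambda}(\alpha_n)}(x)\,\E\left[d_G(Z_{2n},Z_0)^2 \mid Z_0 = x\right] \leq O(\lambda^{12}) n\,,
\]
so by Markov's inequality the $\pi$-measure of $x \in S^{\uparrow}_{\lambda}(\alpha_n)$ for which this conditional expectation exceeds $\lambda^{13} n$ is at most $O(\lambda^{12}) n / (\lambda^{13} n) = O(1/\lambda)$. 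The remaining points in $S'$, namely those outside $S^{\uparrow}_{\lambda}(\alpha_n)$, have $\pi$-mass at most $2/\lambda$ by \eqref{eq:upbound}. Summing the three contributions---$\pi(\partial S) \leq 1/(2\lambda)$, the Markov bound $O(1/\lambda)$, and $\pi(S \setminus S^{\uparrow}_{\lambda}(\alpha_n)) \leq 2/\lambda$---yields the claimed $O(1/\lambda)$ estimate.

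The only subtle step is justifying the equality of conditional expectations on the interior $S'$, and this reduces to the deterministic fact that graph distance from $\rho$ changes by at most one per step, so no special care is needed. I expect no real obstacle: the definitions of tempered and insulated were designed exactly to make each of the three contributions above controllable by $\lambda$, and \pref{lem:admissible} already packages the hard martingale/embedding estimates.
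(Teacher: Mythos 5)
Your proposal is correct and follows essentially the same route as the paper: the same three-way decomposition into the boundary shell $S\setminus B^G_\rho(r-2n)$ (controlled by insulation via \eqref{good2}), the set $S\setminus S^{\uparrow}_{\lambda}(\alpha_n)$ (controlled by \eqref{eq:upbound}), and the remaining points handled by \pref{lem:admissible} plus Markov's inequality, with the identical coupling of $\{X_t\}$ and $\{Z_t\}$ on the interior. No gaps.
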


\begin{proof} Note that by \pref{lem:admissible}, and Markov's inequality, \[\pi\left(\left\{x \in S_{\lambda}^{\uparrow}(\alpha_n):\E[d_G(Z_{2n},Z_0)^2 \mid X_0=x]\geq \lambda^{13} n\right\} \right) \leq O\left(\frac{1}{\lambda}\right)\,. \]

Now, unless $x\in S\setminus B^G_{\rho}(r-2n)$, one can easily couple $\{X_0, X_1, \ldots, X_{2n}\}$ and $\{Z_0, Z_2, \ldots, Z_{2n}\}$ conditioned on $X_0=Z_0=x$. Thus using the fact that $(n,\lambda,r)$ is tempered in $(G,\rho)$, along with \eqref{good2} and \pref{eq:upbound}, we have \begin{align*} \pi\left(\left\{\vphantom{\bigoplus}x : \E[d_G(X_{2n},X_0)^2 \mid X_0=x] \geq  \lambda^{13} n\right\}\right)& \le  \pi\left(S\setminus S_{\lambda}^{\uparrow}(\alpha_n)\right)+O\left(\frac{1}{\lambda}\right)+ \pi (S\setminus B_{\rho}^G(\rho,r-2n)),\\ & \le O\left(\frac{1}{\lambda}\right)\,.\qedhere \end{align*}
\end{proof} Combining the preceding lemma with \eqref{good3} gives us the following.

\begin{corollary}\label{cor:unrestricted} There is a constant $\kappa > 0$ such that for every $\lambda \geq 2,$  if $(n,\lambda,r)$ is tempered and insulated in $(G,\rho)$, then
 \[\frac{\mu_G\left(\left\{\vphantom{\bigoplus}x \in B^G_{\rho}(r) : \E[d_G(X_{2n},X_0)^2 \mid X_0=x] \geq \lambda^{13} n\right\}\right)}{ \mu_G(B^G_{\rho}(r))}
\leq \frac{\kappa}{\lambda} \,. \] \end{corollary}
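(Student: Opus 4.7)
The plan is to observe that the corollary is essentially a rewriting of \pref{lem:unrestricted} once one identifies the stationary measure $\pi$ of the restricted walk with the degree measure $\mu_G$ renormalized on $S = B^G_{\rho}(r)$. By \pref{def:restricted}, the stationary measure of the walk restricted to $S$ is exactly $\pi(x) = \deg_G(x)/\mu_G(S)$ for $x \in S$. Hence for every subset $A \subseteq S$,
\[
\pi(A) \,=\, \sum_{x \in A} \frac{\deg_G(x)}{\mu_G(S)} \,=\, \frac{\mu_G(A)}{\mu_G(S)}\,.
\]
I would apply this identity with $A$ taken to be the ``bad set''
\[
A \,=\, \left\{\vphantom{\bigoplus}x \in B^G_{\rho}(r) : \E[d_G(X_{2n},X_0)^2 \mid X_0 = x] \geq \lambda^{13} n\right\},
\]
which is automatically contained in $S$. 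Under the tempered and insulated hypotheses on $(n,\lambda,r)$, \pref{lem:unrestricted} bounds $\pi(A) \leq O(1/\lambda)$; letting $\kappa$ denote the absolute constant implicit in this $O(\cdot)$, the identity $\pi(A) = \mu_G(A)/\mu_G(S)$ immediately converts this into the claimed inequality
\[
\frac{\mu_G(A)}{\mu_G(B^G_{\rho}(r))} \,\leq\, \frac{\kappa}{\lambda}\,.
\]

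There is no real obstacle here; the only thing worth confirming is that the constant hidden in the $O(1/\lambda)$ of \pref{lem:unrestricted} is genuinely absolute, i.e. independent of $n$, $\lambda$, $r$, and $(G,\rho)$. Tracing the proof, that $O(1/\lambda)$ arises from three absolute contributions: (i) the universal constant $2/\lambda$ from the Markov bound \eqref{eq:upbound} controlling $\pi(S \setminus S^{\uparrow}_{\lambda}(\alpha_n))$; (ii) the $O(1/\lambda)$ obtained by applying Markov's inequality to the $O(\lambda^{12})\,n$ second-moment bound of \pref{lem:admissible}; and (iii) the boundary term $\pi(S \setminus B_{\rho}^G(r-2n)) \leq 1/(2\lambda)$ produced from the insulation condition \eqref{good3} via the elementary computation in \eqref{good2}. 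Since each of these gives an absolute constant times $1/\lambda$, taking $\kappa$ to be their sum yields the desired bound. No additional work beyond this re-indexing from $\pi$ to $\mu_G/\mu_G(S)$ is needed.
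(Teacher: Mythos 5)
Your proof is correct and matches the paper, which simply derives the corollary from \pref{lem:unrestricted} by identifying the stationary measure $\pi$ of the restricted walk with $\mu_G(\cdot)/\mu_G(B^G_{\rho}(r))$; since this identity is exact by \pref{def:restricted}, the conversion is immediate, and your verification that the implied constant in \pref{lem:unrestricted} is absolute is exactly the right sanity check.
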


\subsection{Multi-scale control of growth functionals} \label{sec:SRG}

In this section we find scales which simultaneously satisfy all the criterion in  \pref{thm:folner-version}.

We begin with the following  observation.
For a function $\phi : \N \to \R$, an integer $\ell \geq 1$, let \begin{align*} \theta(\ell) &= \sum_{k=\ell}^{3\ell} \phi(k) 2^{\ell-k}\,.
\end{align*}

Then an elementary geometric summation yields \begin{equation}\label{eq:sumbound} \sum_{\ell=h}^{2h} \theta(\ell) \leq 2 \sum_{k=h}^{5h} \phi(k)\,. \end{equation}

Define now the quantity \begin{align*} \theta^G_{\rho,r}(\ell) &= \sum_{k=\ell}^{3\ell} \bar \phi^G_{\rho,r}(k) 2^{\ell-k}\,,
\end{align*} recalling that $\bar \phi^G_{\rho,r}(k)$ is the average of $\phi^G_x(k)$ over the stationary measure of the random walk restricted to $B^G_{\rho}(r)$. Note that \[ \theta^G_{\rho,r}(\ell)<\lambda \implies \bar \phi^G_{\rho,r}(k) < \lambda 2^{k-\ell} \textrm{ for } k \in \{\ell,\ell+1,\ldots,3\ell\}\,. \] In particular, recalling \pref{def:fertile}, \begin{equation}\label{eq:fertility} \theta^G_{\rho,r}(\ell)<\lambda \implies (n,128\lambda,r) \textrm{ is tempered in $(G,\rho)$ for $n \in [8^{2\ell},8^{2\ell+2}]$}\,. \end{equation}

Consider now a stationary random graph $(G,\rho)$. For $k \geq 1$, define \[ \hat \psi(k) = \E\left[\log |B^G_{\rho}(8^k)|\right]\,. \] If $(G,\rho)$ has weakly annealed polynomial growth \eqref{eq:poly-growth}, then there is a number $s > 0$ such that \begin{equation}\label{eq:poly-growth1} \hat \psi(k) \leq sk\,. \end{equation}

We now fix a number $k_0 \geq 3$, and try to locate triples $(n,\lambda,r)$ with $n > 8^{k_0}$ that are tempered in $(G,\rho)$ with high probability. To ensure simultaneous occurrence of the many conditions required, we define
\begin{align*} \Psi_{(G,\rho)}(k_0) = \sum_{k=9k_0}^{10k_0}  \left[\log \frac{\mu_G(B^G_{\rho}(8^{k+2}))}{\mu_G(B^G_{\rho}(8^k))}\right. &+ \left(\sum_{r \in I(k)}\log \frac{\mu_G(B^G_{\rho}(r))}{\mu_G(B^G_{\rho}(r-8^{4k_0+3}))}\right.\\ &\quad+\left.\left.\frac{1}{k_0 |I(k)|} \sum_{\ell=k_0}^{2k_0}\left[\theta^G_{\rho,r}(\ell) +\left\{\sum_{n=8^{2\ell}}^{8^{2\ell+2}} H_{(G,\rho)}(2n)-H_{(G,\rho)}(2n-1)\right\}\right]\right)\right]\,, \end{align*} where $I(k) = \left\{8^{k}+8^{4k_0+3},8^{k}+2\cdot 8^{4k_0+3}, 8^k+3\cdot 8^{4k_0+3},\ldots ,8^{k+1}-8^{4k_0+3}\right\}.$

Observe that from \eqref{eq:sumbound}, for any $r \in [9k_0,10k_0]$ we have, \[ \sum_{\ell=k_0}^{2k_0} \theta^G_{\rho,r} (\ell) \leq 2 \sum_{\ell=k_0}^{5k_0} {\bar \phi}^G_{B^G_{\rho}(r)}(\ell) \leq 2 \log |B^G_{\rho}(8^{5k_0}+r)| \leq 2 \log |B^G_{\rho}(8^{11k_0})|\,. \]

The sum in braces is bounded by $H_{(G,\rho)}(2\cdot 8^{4k_0+2})$ which is at most $\log  |B^G_{\rho}(2\cdot 8^{4k_0+2})|$. The first two terms sum telescopically to at most $4 \log \mu_G(B^G_{\rho}(8^{10k_0+2})).$ Putting everything together, we arrive at \begin{equation}\label{finalbound} \Psi_{(G,\rho)}(k_0) \leq 4 \log \mu_G(B^G_{\rho}(8^{10k_0+2})) + 3 \log |B^G_{\rho}(8^{11k_0})| \leq 11\log |B^G_{\rho}(8^{11k_0})|\,. \end{equation} Note that we use the trivial bound $\mu_G(B^G_{\rho}(8^{10 k_0+2})) \le  |B^G_{\rho}(8^{11k_0})|^2$ and the fact that $k_0 \geq 3$.

The growth assumption \eqref{eq:poly-growth} now implies that \[ \gamma \defeq \E[\Psi_{(G,\rho)}(k_0)] \leq 121 k_0 s\,. \]

Thus there must exist numbers $(k,r,\ell,n)$ with $k\in [9k_0,10k_0]$ and $r \in [8^k, 8^{k+1}]$ such that \begin{align} \E\left[\log \frac{\mu_G(B^G_{\rho}(8^{k+2}))}{\mu_G(B^G_{\rho}(8^k))}\right] &\leq \frac{4\gamma}{k_0} \leq O(s)\,, \label{eq:g1}\\ \E\left[\log \frac{\mu_G(B^G_{\rho}(r))}{\mu_G(B^G_{\rho}(r-8^{4k_0+3}))}\right ]& \leq \frac{4\gamma}{8^{5k_0-3}}\,, \label{eq:g2} \end{align} and there are similarly $\ell\in [k_0,2k_0]$ and \begin{equation}\label{eq:nell} n \in [8^{2\ell}, 8^{2\ell+2}] \end{equation} such that \begin{align} \E\left[\theta^G_{\rho,r}(\ell)\right] &\leq \frac{4\gamma}{k_0} \leq O(s) \label{eq:g3}\\ H_{2n}-H_{2n-1}=\E\left[H_{(G,\rho)}(2n)-H_{(G,\rho)}(2n-1) \right] &\leq \frac{4\gamma}{k_0 8^{2\ell}} \leq O(s/n)\label{eq:g4}\,, \end{align}
With the above preparation, we are now ready to finish the proof of \pref{thm:folner-version}.

\begin{proof}[Proof of \pref{thm:folner-version}] For every $k_0 \geq 3$, we obtain a quadruple $\left(k(k_0),r(k_0),\ell(k_0),n(k_0)\right)$ satisfying \eqref{eq:g1}--\eqref{eq:g2} and \eqref{eq:g3}--\eqref{eq:g4}. Fix an infinite and strictly increasing sequence of values $\{k_1, k_2, \ldots\}$ so that the sequence of times \[\goodtimes = \{2 \cdot n(k_i) : i=1,2,\ldots\}\,\] is also strictly increasing.

For $2 \cdot n(k_i) \in \goodtimes$, define $r_{2 n(k_i)}=r(k_i)$. Let $\{k_i, \ell_i, 2 n_i\}$ denote the sequence $\{k(k_i), \ell(k_i), 2 n(k_i)\}$. Inequalities \eqref{eq:g1} and \eqref{eq:g4} show that $\goodtimes$ and $\{r_n : n \in \goodtimes\}$ satisfy conditions \eqref{cond2} and \eqref{cond3} of \pref{thm:folner-version} for some constant $C > 0$. It remains to verify condition \eqref{cond1}.

Toward this end, consider some $\e > 0$. From \eqref{eq:g2} and \eqref{eq:g3}, for every $i=1,2,\ldots$, we can choose constants $c \leq O(s/\e)$ and $b_i \leq O(n_i^{-2/3}/\e)$ such that the event \[ \cE_i = \left\{\theta^G_{\rho,r_{n_i}}(\ell_i) < c\right\} \wedge \left\{\log \frac{\mu_G(B^G_{\rho}(r_{n_i}))}{\mu_G(B^G_{\rho}(r_{n_i}-8^{4k_i+3}))} < b_i\right\} \] has $\Pr(\cE_i) \geq 1- \tfrac12 \e$.

Note that from \eqref{eq:fertility} and the choice \eqref{eq:nell}, we know that for $i=1,2,\ldots$, \[ \cE_i \implies \left(n_i, 128 c, r_{n_i}\right) \textrm{ is tempered in } (G,\rho)\,. \]

Define $\lambda = 2\kappa/\e$, where $\kappa$ is the constant from \pref{cor:unrestricted}. Then since $2n_i \leq 8^{2\ell_i+3} \leq 8^{4k_i+3}$, it holds that \begin{align*} \log \frac{\mu_G(B^G_{\rho}(r_{n_i}))}{\mu_G(B^G_{\rho}(r_{n_i}-8^{4k_i+3}))} < \frac{1}{4\lambda} &\implies \frac{\mu_G\left(B^G_{\rho}(r_{n_i}) \setminus B^G_{\rho}(r_{n_i}-8^{4k_i+3}) \right)}{\mu_G(B^G_{\rho}(r_{n_i})} < \frac{1}{4\lambda} \\ &\implies \frac{\mu_G\left(B^G_{\rho}(r_{n_i}) \setminus B^G_{\rho}(r_{n_i}-2{n_i}) \right)}{\mu_G(B^G_{\rho}(r_{n_i})} < \frac{1}{4\lambda}\,, \\ &\implies (n_i, \lambda, r_{n_i}) \textrm{ is insulated in } (G,\rho)\,. \end{align*} where the first inequality uses $\log(1-x) \leq -x$ for $x \in [0,1)$. Therefore, \[ \cE_i \wedge \left\{b_i < \frac{1}{4\lambda}\right\} \implies (n_i,\lambda, r_{n_i}) \textrm{ is insulated in } (G,\rho)\,. \] Note that $b_i < 1/(4\lambda)$ occurs for all $k_i$ sufficiently large (since $n_i \to \infty$ as $k_i \to \infty$).

We can thus apply \pref{cor:unrestricted} to conclude that if $\cE_i$ occurs and $k_i$ is sufficiently large, then there is a constant $C' > 0$ such that \[ \frac{\mu_G\left(\left\{\vphantom{\bigoplus}x \in B^G_{\rho}(r_{n_i}) : \E[d_G(X_{2n_i},X_0)^2 \mid X_0=x, (G,\rho)] \geq (C' \e^{-13}) n_i\right\}\right)}{\mu_G(B^G_{\rho}(r_{n_i}))} \leq \frac{\e}{2}\,. \] We conclude that \[ \E\left[\frac{\mu_G\left(\vphantom{\bigoplus}\left\{ x \in B^G_{\rho}(r_{n_i}) : \E\left[d_G(X_0, X_{2n_i})^2 \mid X_0 = x, (G,\rho)\right] \geq (C' \e^{-13}) n_i\right\}\right)}{\mu_G(B^G_{\rho}(r_{n_i}))}\right] \leq \frac{\e}{2} + (1-\Pr[\cE_i]) \leq \e\,,\] completing the proof. \end{proof}

\section{Existence of exceptional times}

 We now present an example showing that one cannot hope to prove \pref{thm:goodtimes} for all times. For ease of notation, throughout this section, if $G$ is a graph, we use $V(G)$ and $E(G)$ for the vertex and edge set of $G$, respectively.

\begin{theorem}\label{thm:exceptional}
There exists a stationary random rooted graph $(G,\rho)$ with the following properties: \begin{enumerate}

\item Almost surely:  For any $x\in V(G)$ and $r>0,$ it holds that $|B^G_{x}(r)|\le O(r^7).$

\item Almost surely: $\displaystyle{\sup_{x\in V(G)}} \deg_G(x)=4$.

\item Let $f : \N \to \N$ be an unbounded, monotone increasing function.
Then
there is a sequence of times $\{t_k\}_{k\in \N}$ so that
 \[\lim_{k \to \infty} \Pr\left({\E}\left[d_G(X_0, X_{t_k})^2 \mid (G,\rho), X_0=\rho\right] \ge  t_k \frac{\log t_k}{f(t_k)} \right) =1\,.
\]
\end{enumerate} \end{theorem}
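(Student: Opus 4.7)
The plan is to construct an explicit stationary random rooted graph $(G,\rho)$ satisfying the required conditions. The overall idea is to start from a deterministic base graph with polynomial volume growth and bounded degree (for example, a copy of $\Z$), and to decorate it randomly with a countable sequence of \emph{gadgets} indexed by a scale $k$. At scale $k$, a gadget is a finite subgraph of intrinsic diameter $\asymp \sqrt{t_k \log t_k / f(t_k)}$, designed so that, when entered by the random walk, the walk is forced into a nearly-ballistic traversal of its diameter in time $\asymp t_k$. Independence of gadget placement along the base graph makes the resulting object translation-invariant, and a Palm rooting yields stationarity.

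The gadget design and placement must be calibrated to preserve both polynomial growth and bounded degree. Each gadget will be attached to the base graph by a single edge, and the gadget itself will be assembled from vertices of degree at most $3$, so that the maximum degree in the global graph does not exceed $4$. For polynomial growth of order $r^7$, I would bound the expected volume of $B^G_\rho(r)$ by a sum over scales: each scale $k$ contributes at most (gadget density at scale $k$) $\times$ (number of backbone vertices in $B^G_\rho(r)$) $\times$ (expected gadget size at scale $k$). By choosing the placement density $p_k$ and the gadget sizes to decay geometrically in $k$, this sum can be forced to be $O(r^7)$ almost surely after controlling the tails.

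To verify the superdiffusive behavior, I set $t_k$ equal to the time needed for the walk to cross a scale-$k$ gadget. Two ingredients are required. First, within the first $t_k/2$ steps, the walk should with high probability visit the attachment vertex of some scale-$k$ gadget of sufficient depth; this follows from choosing $p_k$ large enough that a positive-density fraction of the walk's range is decorated at scale $k$, together with standard hitting-time estimates for balls of polynomial growth. Second, conditioned on entry, the walk should traverse intrinsic distance $\asymp \sqrt{t_k \log t_k / f(t_k)}$ inside the gadget during the remaining $t_k/2$ steps; this uses the (nearly) ballistic behavior of the walk inside the gadget. Combining these two through a union bound over the positive-density collection of scale-$k$ gadgets in reach of the walk then yields $\E[d_G(X_0, X_{t_k})^2 \mid (G,\rho), X_0 = \rho] \ge t_k \log t_k / f(t_k)$ with probability tending to one.

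The main obstacle is the gadget design. The gadgets must simultaneously have intrinsic diameter exceeding $\sqrt{t_k}$, admit a ballistic traversal by the walk in time $\asymp t_k$, and occupy only polynomial volume in the ambient graph. In the polynomial-growth setting this rules out infinite trees (which have exponential volume) but still requires enough local tree-like branching to generate ballistic motion. A natural candidate is a truncated $3$-regular tree of depth $\asymp \sqrt{t_k \log t_k / f(t_k)}$: simple random walk on such a truncated tree is ballistic at positive speed until it reaches the leaves, producing the desired displacement in time proportional to the depth, while the overall volume contribution per gadget is controlled by the truncation depth. Coordinating the truncation depths and the placement densities across scales, and controlling potential interference when the walk encounters gadgets of multiple scales or exits a gadget before completing the ballistic phase, is the most delicate technical step; it will likely require a careful multi-scale Borel--Cantelli argument together with hitting-time bounds for the walk restricted to finite subgraphs, in the same spirit as the analysis of \pref{sec:choosing}.
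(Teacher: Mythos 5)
There is a genuine gap, and it is located exactly where you flag "the main obstacle": the gadget design. A truncated $3$-regular tree of depth $D$ contains $\asymp 2^{D}$ vertices within distance $D$ of its root, so the moment a single such gadget of depth $D\asymp\sqrt{t_k\log t_k/f(t_k)}$ is attached anywhere in the graph, property (1) fails deterministically (take $x$ to be the gadget's root and $r=D$; no averaging over the random placement can repair an almost-sure, all-$x$, all-$r$ volume bound). Your volume computation only controls $\E|B^G_\rho(r)|$ via placement densities, which is the wrong quantity. There is also an internal inconsistency in the timing: a walk that is genuinely ballistic inside the gadget covers distance $\asymp t_k$ in $t_k$ steps, not $\sqrt{t_k\log t_k/f(t_k)}$; what you actually need is a gadget in which the walk moves at the intermediate rate $\mathrm{dist}\asymp\sqrt{t\log t}$, i.e.\ exactly at the Varopoulos--Carne threshold, and a regular tree (truncated or not) does not produce that rate --- it is either exponential-volume and ballistic, or, once pruned to polynomial volume, no longer superdiffusive. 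The missing idea is the \emph{subdivided expander}: take a $3$-regular expander $G_n$ on $n$ vertices and replace each edge by a path of length $n$. This graph has at most quadratic volume growth (cf.\ \pref{lem:Gtilde}), yet after time $\asymp n^2\log n$ the walk has performed $\asymp\log n$ effective expander steps and mixed, so it sits at expander-distance $\asymp\log n$, i.e.\ intrinsic distance $\asymp n\log n$, giving squared displacement $\asymp t\log t$. The $\log$ factor comes from expansion/entropy, not from ballistic tree motion, and this is the only known mechanism compatible with polynomial volume.

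A secondary issue is the passage to a stationary random graph. In any such construction almost all of the stationary mass lives \emph{inside} the gadgets, so a Palm/degree-biased root is typically deep inside some gadget rather than on the backbone; your first ingredient (the walk reaches a scale-$k$ attachment vertex within $t_k/2$ steps) is then far from automatic. The paper resolves this by nesting the subdivided expanders recursively --- attaching a copy of $H_{k-1}$ to every vertex of $G_{n_k}[n_k]$ via long separating tails, with $n_k\ge 2n_{k-1}^2$ --- so that a stationarily-rooted vertex of the local weak limit lies, with probability $\to 1$, inside a copy of $H_\ell$ for each fixed $\ell$, and the walk started there can be coupled to the walk on that copy until it escapes (\pref{lem:nozero}, \pref{lem:finitespeed}, \pref{lem:local-limit}). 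Your single-backbone-with-independent-decorations architecture would need an analogous argument, and as written it does not engage with the fact that the root is almost never on the backbone.
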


\begin{remark}
With more effort, it is possible to obtain a similar construction with $|B^G_x(r)| \leq r^{2+o(1)}$.
\end{remark}

The basic idea of the construction is simple:  Let $G_n[n]$ denote the result of taking a $3$-regular expander graph on $n$ vertices
and replacing every edge by a path of length $n$.  Then by construction, the volume growth is at most quadratic,
but after time $\approx n^2 \log n$, the random walk will have gone distance $\approx n^2 (\log n)^2$,
making it slightly superdiffusive.  The technical difficulties lie in converting
this finite family of examples into a stationary random graph.
To accomplish this, we build a tree of such graphs (see \pref{fig:hk}), with the
sizes decreasing rapidly down the tree, and with buffers between the levels
to enforce polynomial volume growth.

\subsection{Trees of graphs}

\begin{figure}
\begin{center}
\subfigure[Illustration of $G$ over many copies of $H$ in $\newgraph(G;H)$]{
\includegraphics[width=8.12cm]{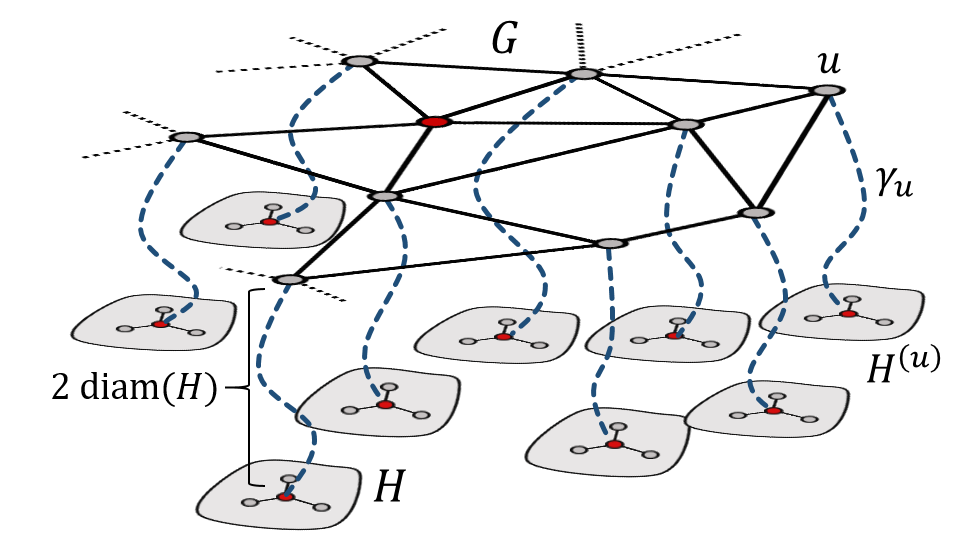}\label{fig:GoverH}
}\hspace{0.11cm} 
\subfigure[A partially drawn copy of $H_k$ inside $H_{k+1}$]{\makebox[8cm]{\includegraphics[width=8.12cm]{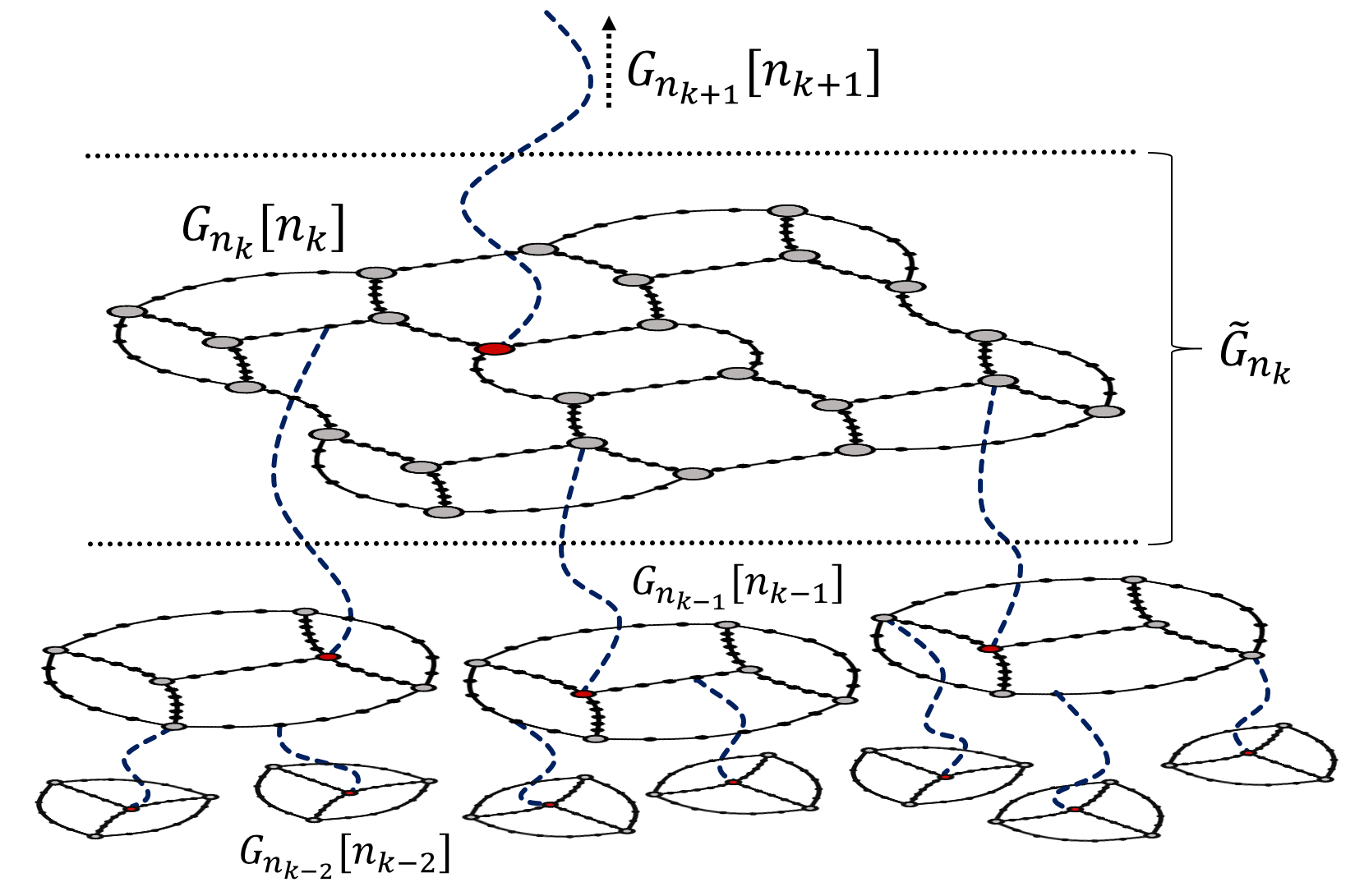}\label{fig:hk}}}
\caption{Trees of graphs}
\end{center}
\end{figure}

We first describe a certain way of constructing graphs from other graphs and provide some preliminary estimates on the properties of the construction.
In this section, we will deal primarily with rooted graphs.  For a graph $G$, we use $\rho_G \in V(G)$ to denote its root.

\medskip \paragraph{A tree of $H$'s under $G$}

Consider two rooted graphs $H$ and $G$.  Construct a new rooted graph $\newgraph=\newgraph(G;H)$ as follows.
Take $|V(G)|-1$ disjoint copies of $H$: $\{H^{(u)} : u \in V(G) \setminus \{\rho_G\}\}$.
Let $\rho_H^{(u)}$ be the copy of $\rho_H$ in $H^{(u)}$.
Let $\{ \gamma_u : u \in V(G) \setminus \{\rho_G\}\}$ be a collection of edge-disjoint paths of length $2\, \diam(H)$
 where $\gamma_u$ connects $u \in V(G) \setminus \{\rho_G\}$ to $\rho_H^{(u)}$ in $H^{(u)}$. Define \begin{align*} V(\newgraph) &= V(G) \cup \bigcup_{u \in V(G) \setminus \{\rho_G\}} \left(V(H^{(u)}) \cup V(\gamma_u)\right)\,, \\ E(\newgraph) &= E(G) \cup \bigcup_{u \in V(G) \setminus \{\rho_G\}} \left(E(H^{(u)}) \cup E(\gamma_u)\right)\,. \end{align*} There is a natural identification $V(G) \subseteq V(\newgraph)$ and we define the root $\rho_{\newgraph}=\rho_G$ of $\newgraph$. We refer to the paths $\{\gamma_u\}$ as {\em tails.}
See \pref{fig:GoverH}.

\medskip For a graph $G$, let us use $\Delta_G = \max_{v \in V(G)} \deg_G(v)$ to denote its maximum degree.

\begin{lemma} \label{lem:degrees} For any rooted graphs $H$ and $G$ and $h \geq 1$, if $\newgraph = \newgraph(G;H)$, then $\deg_{\newgraph}(\rho_{\newgraph}) = \deg_G(\rho_G)$, and
\begin{align*}
 \Delta_{\newgraph} &\leq \max\{\Delta_H, \Delta_G+1,\deg_H(\rho_H)+1\}\,. \\
 |V(\newgraph)| &= |V(G)| + \left(\vphantom{\bigoplus} |V(G)|-1\right) \left(\vphantom{\bigoplus}|V(H)|+2\,\diam(H)-2\right)\,, \\ \diam(\newgraph) &\leq \diam(G) + 6\,\diam(H)\,. \end{align*} \end{lemma}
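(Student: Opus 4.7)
The plan is to verify the four assertions in order, each by direct inspection of the construction, since this lemma is essentially a bookkeeping exercise about the operation $\newgraph(G;H)$ rather than a substantive result.

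First, for the root degree, I would observe that by construction the collection of tails is indexed by $V(G) \setminus \{\rho_G\}$, so no tail has $\rho_G$ as an endpoint, and no new edges are incident to $\rho_G$ beyond those already in $E(G)$. Hence $\deg_{\newgraph}(\rho_{\newgraph}) = \deg_G(\rho_G)$ follows immediately.

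Second, for the maximum degree bound, I would partition $V(\newgraph)$ into four types and bound the degree of each separately: (i) the root $\rho_G$ has degree $\deg_G(\rho_G) \leq \Delta_G$; (ii) any other vertex $u \in V(G) \setminus \{\rho_G\}$ has $\deg_G(u)$ edges from $E(G)$ plus one edge to the first interior vertex of $\gamma_u$, giving at most $\Delta_G + 1$; (iii) the vertex $\rho_H^{(u)}$ inside a copy $H^{(u)}$ has $\deg_H(\rho_H)$ edges from $E(H^{(u)})$ plus one from the tail $\gamma_u$, giving at most $\deg_H(\rho_H)+1$; (iv) any other vertex of $H^{(u)}$ has its unchanged $H$-degree, bounded by $\Delta_H$; and (v) any interior vertex of a tail has degree exactly $2$. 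Taking the maximum over all cases yields the stated bound.

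Third, for $|V(\newgraph)|$, the vertex set is the disjoint union of $V(G)$ with, for each $u \in V(G) \setminus \{\rho_G\}$, the $|V(H)|$ vertices of $H^{(u)}$ together with the interior vertices of the tail $\gamma_u$. Since the path $\gamma_u$ connects $u$ to $\rho_H^{(u)}$ and has two of its endpoints already accounted for (one in $V(G)$, one in $V(H^{(u)})$), its interior contributes $2\diam(H)-2$ new vertices, and summing gives the identity. Finally, for the diameter, the natural approach is to define a retraction $u \colon V(\newgraph) \to V(G)$ by sending $x \in V(G)$ to itself, $x \in V(H^{(u)})$ to $u$, and $x \in \gamma_u$ to $u$. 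In each case one checks $d_{\newgraph}(x, u(x)) \leq 3\,\diam(H)$, using at most $\diam(H)$ steps inside $H^{(u)}$ to reach $\rho_H^{(u)}$ followed by at most $2\,\diam(H)$ steps along the tail. Then for any $x,y \in V(\newgraph)$, concatenating the path $x \rightsquigarrow u(x)$, a shortest $G$-path from $u(x)$ to $u(y)$ (of length at most $\diam(G)$), and the path $u(y) \rightsquigarrow y$ yields the bound $d_{\newgraph}(x,y) \leq \diam(G)+6\,\diam(H)$.

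There is no genuine obstacle in this lemma; the only mild subtlety is being careful with shared endpoints when counting vertices of the tails, and with the case $x \in \gamma_u$ when bounding $d_{\newgraph}(x, u(x))$, both of which are handled above.
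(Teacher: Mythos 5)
Your verification is correct, and it is exactly the routine case-by-case check that the paper implicitly relies on (the paper states this lemma without proof). The one point worth flagging is the tail count: your figure of $2\,\diam(H)-2$ interior vertices per tail reproduces the lemma's formula, but under the usual convention that a path of length $L$ has $L+1$ vertices it would be $2\,\diam(H)-1$; this off-by-one is already present in the paper's own statement and is immaterial downstream, where only the crude bound $|V(\newgraph)| \leq |V(G)|\,\left(|V(H)|+2\,\diam(H)\right)$ is used.
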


\paragraph{Graph subdivision}

For a parameter $L \in \N$, we define a graph $ G[L]$ as the one which arises from $G$ by subdividing every edge in $E(G)$ into a path of length $L$. If $G$ has root $\rho_G$, then under the natural identification $V(G) \subseteq V(G[L])$, we set $\rho_{G[L]}=\rho_G$.
Note that: \begin{equation}\label{eq:subsize} |V(G[L])| = |V(G)| + |E(G)|\cdot (L-1)\,. \end{equation}

\subsection{Stretched expanders and the rate of escape}

Let $\{G_n : n \in 2\N\}$ denote a family of $3$-regular, $n$-vertex non-bipartite expander graphs. For each such $n$, we distinguish an arbitrary root $\rho_{G_n} \in V(G_n)$.  We use $\tmix(G)$ to denote the (total variation) mixing time of a graph $G$.

\begin{fact} There is a constant $C > 0$ such that $\tmix(G_n) \leq C \log n$ for all $n \in 2\N$.
\end{fact}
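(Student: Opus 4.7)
The plan is to invoke the standard bound relating mixing time to the absolute spectral gap of the transition operator, together with the fact that $3$-regular expanders have spectral gap bounded away from $0$ by definition.

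More concretely, let $P_n$ denote the transition matrix of the lazy-free simple random walk on $G_n$ and let $1 = \mu_1 > \mu_2 \geq \cdots \geq \mu_n \geq -1$ be its eigenvalues. By the definition of an expander family, there exists a constant $\gamma_0 > 0$ (independent of $n$) such that $\mu_2 \leq 1 - \gamma_0$ for all $n$. Since $G_n$ is $3$-regular and non-bipartite, a standard argument (e.g., bounding $\mu_n$ by the odd-girth bottleneck, or passing to $G_n^2$ and noting that it has at most one trivial eigenvalue equal to $1$) yields a constant $\gamma_0' > 0$ with $\mu_n \geq -1 + \gamma_0'$. Hence the absolute spectral gap $\gamma_*(G_n) \defeq 1 - \max\{\mu_2, |\mu_n|\}$ is bounded below by a constant independent of $n$.

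Next, since $G_n$ is $3$-regular, the stationary distribution of the random walk is uniform on $V(G_n)$, so $\pi_{\min} = 1/n$. I would then apply the classical bound (see, e.g., Levin--Peres--Wilmer, Theorem 12.3)
\[
\tmix(G_n) \leq \frac{1}{\gamma_*(G_n)} \log\!\left(\frac{4}{\pi_{\min}}\right) \leq \frac{1}{\gamma_0''}\log(4n)\,,
\]
which gives the desired bound $\tmix(G_n) \leq C \log n$ for some universal constant $C > 0$.

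There is no real obstacle here: the argument is standard and the only non-trivial inputs, namely the uniform lower bound on the spectral gap and the control on the most negative eigenvalue via non-bipartiteness, are built into the definition of the family $\{G_n\}$. The only small care to take is to ensure that one bounds the \emph{absolute} spectral gap rather than just $1 - \mu_2$, which is why the non-bipartiteness hypothesis on $G_n$ appears in the statement.
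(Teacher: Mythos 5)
The paper states this as a bare Fact with no proof, and your spectral argument is certainly the intended route: absolute spectral gap bounded below plus $\pi_{\min}=1/n$ gives $\tmix(G_n)\leq \gamma_*^{-1}\log(4/\pi_{\min}) = O(\log n)$. That part is fine.

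The genuine gap is the step where you claim that $3$-regularity and non-bipartiteness yield a constant $\gamma_0'>0$, independent of $n$, with $\mu_n\geq -1+\gamma_0'$. Neither of your suggested justifications works: the odd-girth/bipartiteness-deficiency bounds on $1+\mu_n$ degrade with $n$ (they are of the form $1+\mu_n \gtrsim 1/(\mathrm{poly}(n))$), and passing to $G_n^2$ only tells you that $\mu_n^2<1$, not that it is bounded away from $1$ uniformly. Indeed the claim is false as stated: one can take a $3$-regular \emph{bipartite} expander and modify $O(1)$ edges to destroy bipartiteness; the result is still a bounded-degree edge-expander, but the walk's parity is only scrambled when it crosses the few ``odd'' edges, which takes time $\Theta(n)$, so $\tmix=\Theta(n)$ and $\mu_n\to -1$ along such a family. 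So edge expansion plus non-bipartiteness does not imply a uniform two-sided spectral gap. The fix is easy but should be said explicitly: the family $\{G_n\}$ is being \emph{chosen} by the construction, so one simply takes expanders with an absolute spectral gap $\gamma_*(G_n)\geq \gamma_0>0$ (e.g.\ Ramanujan graphs, or random $3$-regular graphs, for which $|\mu_n|$ is bounded away from $1$ with high probability); equivalently, one reads ``expander'' in the paper as ``two-sided spectral expander.'' With that reading your computation goes through verbatim and gives the Fact.
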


Since $G_n$ is $3$-regular, a fixed vertex is further than $\frac13 \log n$ from
all but $o(n)$ vertices in $G_n$.
Combining this with the preceding fact yields the following.

\begin{lemma}
There is a constant $c \geq 1$ such that the following holds for every $\e \in (0,1)$
and $n \geq 1/\e^3$.
If $\{X_t\}$ is the random walk in $G_n$ and
$t \geq (c/\e) \tmix(G_n)$, then
\begin{equation}\label{eq:speed}
\min_{x \in V(G_n)} \Pr\left[d_{G_n}(X_0,X_t) > \tfrac13 \log n \mid X_0=x\right] \geq 1-\e. \end{equation}
\end{lemma}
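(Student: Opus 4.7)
The plan is to reduce the claim to a triangle-inequality bound in total variation. Let $\pi$ denote the stationary distribution of the simple random walk on $G_n$; since $G_n$ is $3$-regular, $\pi$ is uniform on $V(G_n)$. For any starting vertex $x$ and any time $t$,
\[
\Pr\!\left[d_{G_n}(X_0,X_t) \leq \tfrac13 \log n \,\big|\, X_0=x\right] \;\leq\; \pi\!\left(B_x^{G_n}(\tfrac13 \log n)\right) \;+\; d_{\mathrm{TV}}\!\left(\mathrm{Law}(X_t \mid X_0=x),\,\pi\right),
\]
so it will suffice to argue that each of the two right-hand terms is at most $\e/2$, for a suitably chosen absolute constant $c$.

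For the volume term, I will use the elementary bound that a ball of radius $r$ in a $3$-regular graph has size at most $3\cdot 2^r$ (summing $1+3+3\cdot 2+\cdots+3\cdot 2^{r-1}$). Taking $r=\tfrac13 \log n$ gives $|B_x^{G_n}(\tfrac13 \log n)| \leq 3 n^{\alpha}$ for some $\alpha < 1$ (exactly $1/3$ if $\log$ denotes $\log_2$), so $\pi(B_x^{G_n}(\tfrac13 \log n)) \leq 3 n^{\alpha-1}$. The hypothesis $n \geq 1/\e^3$ then makes this at most $\e/2$, the exponent $3$ in $1/\e^3$ providing more than enough slack (and any residual constant can be absorbed into $c$, or equivalently, into a slight shrinkage of the multiplicative constant in front of $\log n$).

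For the mixing term, I will appeal to submultiplicativity of total variation distance to stationarity, which yields $d_{\mathrm{TV}}(\mathrm{Law}(X_{k\,\tmix(G_n)}),\,\pi) \leq 2^{-k}$. Applying this with $k=c/\e$, the desired bound $2^{-c/\e} \leq \e/2$ rearranges to $c \geq \e\log_2(2/\e)$; since $\e\log_2(2/\e)$ is bounded on $(0,1]$ (maximized at $\e=1$ with value $1$), any absolute constant $c \geq 1$ suffices. Taking the larger of the two constants arising from the two parts gives a single $c$ that works for all $\e \in (0,1)$.

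The only subtlety is the interplay between the two estimates — verifying that a single constant $c$ works uniformly in $\e$ — but this is purely a calculus check. No deeper obstacle arises, because both ingredients (the ball estimate from $3$-regularity and the exponential decay of total variation distance after iterated mixing) are entirely standard; the $3$-regularity alone forces balls of radius $\tfrac13 \log n$ to be polynomially small in $n$, and this is the only specific property of $G_n$ that the argument uses.
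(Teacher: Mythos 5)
Your proof is correct and follows the same route the paper intends: the paper's one-line justification is precisely that a ball of radius $\tfrac13\log n$ in a $3$-regular graph contains $o(n)$ vertices, combined with the mixing-time fact, and your triangle-inequality decomposition into a stationary-mass term plus a total-variation term is the standard way to make that rigorous. The only quibbles are cosmetic: $\e\log_2(2/\e)$ actually peaks near $\e\approx 0.74$ at a value slightly above $1$ (so take $c=2$), and the ball estimate $3n^{\alpha-1}\leq \e/2$ requires $n$ moderately large when $\e$ is close to $1$, which is harmless since only finitely many small-$n$ cases arise and the lemma is invoked in the paper only with $\log n \geq 48$.
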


For a graph $H$, define
\begin{align*}
\hit(H) &= \max_{x,y \in V(H)} \E\left[\min \left\{ t \geq 0 : Y_t = y\right\} \mid Y_0=x\right]\,,
\end{align*}
where $\{Y_t\}$ is the random walk on $H$.

One has the following basic estimate (see, e.g., \cite[Ch. 10]{LPW09}):
\begin{equation}\label{eq:hit} \hit(H) \leq 2 \Delta_H |V(H)|^2\,. \end{equation}
Let $\pi_G$ denote the stationary measure of the random walk on a graph $G$.

\begin{lemma}\label{lem:nozero}
There is a constant $c' > 0$ such that the following holds
for all $\e \in (0,1)$ and $n \geq 1/\e^3$.
Let $\{X_t\}$ denote the random walk on $\newgraph=\newgraph(G_n[L];H)$,
with $X_0$ chosen
according to the stationary measure $\pi_{\newgraph}$.

Assume that $L \geq  \diam(H)$ and $\log n \geq 48$.
Then for all \[t \in \left[\frac{c'}{\e^2} \Delta_H |V(H)|^2 L^2 \tmix(G_n),\frac{\e L^2 n}{c'}\right]\,,\] it holds that
\[
\Pr\left(\E\left[d_{\newgraph}(X_t, X_0)^2 \1_{\left\{\rho_{\newgraph} \notin \left\{X_0, X_1, \ldots, X_t\right\}\vphantom{\bigoplus}\right\}} \mid X_0\right] \geq
\frac{(L \log n)^2}{72} \right)
 \geq 1-\e\,.
\]
\end{lemma}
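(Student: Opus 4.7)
The approach is to project the walk $\{X_t\}$ onto $V(G_n) \subseteq V(\newgraph)$ and compare this projection to a simple random walk on the expander $G_n$ (slowed down by the tails and $H$-copies), then invoke the mixing estimate \eqref{eq:speed}. Define $P:V(\newgraph) \to V(G_n)$ to be the nearest-point projection. Using the construction of $\newgraph$ together with $L \geq \diam(H)$, one checks $d_\newgraph(x, P(x)) \leq 3L$ for all $x \in V(\newgraph)$, and hence $d_\newgraph(x,y) \geq L \cdot d_{G_n}(P(x), P(y)) - 6L$ for all $x,y$. Let $\tau_0 = 0$, $\tau_{s+1} = \inf\{t > \tau_s : X_t \in V(G_n)\}$, and set $Y_s = X_{\tau_s}$. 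Then $\{Y_s\}$ is a reversible Markov chain on $V(G_n)$, and its excursions are of two types. An excursion into a tail $\gamma_u$ or attached $H^{(u)}$ (with $Y_s = u \neq \rho_{G_n}$) returns to $u$ and has expected duration at most $2\hit(\gamma_u \cup H^{(u)}) = O(\Delta_H |V(H)|^2)$ by \eqref{eq:hit}; an excursion into a subdivided $G_n$-edge has expected duration $O(L)$ and, by gambler's ruin, completes (reaches the opposite endpoint in $V(G_n)$) with probability $\Theta(1/L)$. Since $G_n$ is $3$-regular, a brief calculation shows that conditional on a non-trivial transition $Y_{s+1} \neq Y_s$, the next state is uniform over the $G_n$-neighbors of $Y_s$.

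Let $M_t$ count the non-trivial transitions of $\{Y_s\}$ by time $t$. The expected time per such transition is $O(L^2 + L\Delta_H|V(H)|^2) = O(\Delta_H|V(H)|^2 L^2)$. A routine Chebyshev-type concentration argument applied to the (conditionally independent) excursion durations shows that, provided $c'$ is sufficiently large, for every $t$ in the stated range we have $\tmix(G_n) \leq M_t \leq \epsilon n/4$ with conditional probability at least $1-\epsilon/4$ given $X_0$. Since the sequence of distinct values of $Y_s$ is a simple random walk on $G_n$, applying \eqref{eq:speed} conditional on $M_t \geq \tmix(G_n)$ (and using $n \geq 1/\epsilon^3$) yields $d_{G_n}(P(X_t),P(X_0)) \geq (\log n)/3$ with conditional probability $\geq 1-\epsilon$. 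The distance estimate for $P$ then lifts this to $d_\newgraph(X_t, X_0) \geq L(\log n)/3 - 6L \geq L(\log n)/4$, using $\log n \geq 48$.

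To control the avoidance of $\rho_\newgraph$, observe that $\rho_\newgraph \in \{X_0,\ldots,X_t\}$ if and only if the projected $G_n$-walk visits $\rho_{G_n}$ within its first $M_t$ steps. For $X_0 \sim \pi_\newgraph$, the stationary measure decomposes across attached $H$-copies, and a simple counting argument using 3-regular expansion shows that $P(X_0)$ has $G_n$-distance $\geq (\log n)/10$ from $\rho_{G_n}$ with $\pi_\newgraph$-probability $\geq 1 - \epsilon/4$. Starting from such a vertex, the mixing inequality $\Pr[W_k = \rho_{G_n}] \leq 2/n$ for $k \geq \tmix(G_n)$ (and the bound $\tmix(G_n) \leq C\log n$) gives, after summing over $k \leq M_t \leq \epsilon n/4$, that the projected walk avoids $\rho_{G_n}$ with conditional probability at least $1 - O(\epsilon)$.

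Combining the two events via a union bound, for a $(1-\epsilon)$-fraction of $X_0 \sim \pi_\newgraph$ we obtain with conditional probability at least $9/16$ both $d_\newgraph(X_t, X_0) \geq L(\log n)/4$ and $\rho_\newgraph \notin \{X_0,\ldots,X_t\}$, whence $\E\!\left[d_\newgraph(X_t, X_0)^2\,\mathbf{1}_{\{\rho_\newgraph \notin \{X_0,\ldots,X_t\}\}} \mid X_0\right] \geq (9/16)(L \log n / 4)^2 \geq (L \log n)^2 / 72$. The principal technical obstacle is the simultaneous two-sided control of $M_t$: the lower bound $M_t \geq \tmix(G_n)$ is needed to apply \eqref{eq:speed}, while the upper bound $M_t \leq \epsilon n/4$ is needed to avoid $\rho_\newgraph$, and both must hold for the same $t$. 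This is precisely why the admissible range takes the multiplicative form $[(c'/\epsilon^2)\Delta_H |V(H)|^2 L^2 \tmix(G_n),\, \epsilon L^2 n/c']$, with the factor $L^2$ appearing at both endpoints as the (amortized) cost per $G_n$-edge crossing.
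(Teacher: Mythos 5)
Your overall strategy coincides with the paper's: decompose the walk according to its successive visits to $V(G_n)$, use the hitting-time bound \eqref{eq:hit} to guarantee enough $G_n$-steps by the lower endpoint of the time window, use concentration of the exit time of a $\Z$-walk from $(-L,L)$ to cap the number of $G_n$-steps at the upper endpoint, apply the expander speed estimate \eqref{eq:speed}, and lift distances back to $\newgraph$ by the factor $L$. The one idea you are missing is the paper's auxiliary graph $\hat{\newgraph}$, obtained by attaching an extra copy of $H$ to the root so that \emph{every} vertex of $G_n$ carries the same decoration; that symmetry does real work in two places where your argument has gaps.

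First, root avoidance. On $\hat{\newgraph}$ the embedded walk $\{Z_{\tau_j}\}$ is, conditioned on the waiting times, a \emph{stationary} random walk on the regular graph $G_n$, so $\Pr[\rho_{\newgraph} \in \{Z_{\tau_1},\dots,Z_{\tau_K}\} \mid K \le T] \le (T+1)/n$ by a trivial union bound. You instead condition on $P(X_0)$ being at $G_n$-distance at least $(\log n)/10$ from the root and invoke mixing, which leaves the range $k \in [(\log n)/10,\, \tmix(G_n))$ uncontrolled: the starting-distance argument only kills $k < (\log n)/10$, while the bound $\Pr[W_k=\rho_{G_n}]\le 2/n$ only applies for $k \ge \tmix(G_n)$, which may be $C\log n$ for a large absolute constant $C$. (This is patchable via a return-probability estimate for expanders, but as written the step is unjustified; note also that conditioning on a far-away start destroys the stationarity you would otherwise fall back on.) Second, independence. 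Your Chebyshev argument for $M_t$ and your application of \eqref{eq:speed} ``conditional on $M_t \ge \tmix(G_n)$'' both treat the excursion durations as i.i.d.\ and independent of the embedded trajectory. On $\newgraph$ itself this fails: the root has no attached copy of $H$, so the waiting-time distribution there differs from that at every other vertex, and conditioning on the value of $M_t$ biases the embedded trajectory. The paper's $\hat{\newgraph}$ makes the waiting times genuinely i.i.d.\ and independent of $\{Z_{\tau_j}\}$, and then couples back to $\newgraph$ precisely on the root-avoidance event, which is also the event in the indicator of the lemma. A further small slip: \eqref{eq:speed} requires the embedded walk to take at least $(c/\e)\tmix(G_n)$ steps, not $\tmix(G_n)$ steps; this is where one factor of $1/\e$ in the lower endpoint of the admissible time window comes from (the other comes from Markov's inequality applied to the waiting times).
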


\begin{proof}
Let $\tmix =  (3c/\e)\tmix(G_n)$.
Let $\hat{\newgraph}$ denote the graph $\newgraph$, but where a path $\hat \gamma$ of length $2\,\diam(H)$ is added between $\rho_{\newgraph}$ and a new copy $H^{(\rho_G)}$ of $H$ (so that now all vertices of $G_n[L]$ have a copy of $H$ attached).

Consider the random walk $\{Z_k\}$ on $\hat{\newgraph}$. Let $\tau_1 < \tau_2 < \cdots$ be the sequence of times at which $Z_{\tau_j} \in V(G_n)$. Let $K = \max \{ j : \tau_j < t \}$ (and let $K=0$ if no such $j$ exists).
Observe that, conditioned on the sequence $\{\tau_j\}$, the process $\{Z_{\tau_1}, Z_{\tau_2}, \ldots\}$ has the law of random walk on $G_n$, therefore using \eqref{eq:speed} yields \begin{align}\label{eq:ktmix}
\Pr\left[\{K > 0\} \wedge d_{G_n}(Z_{\tau_K}, Z_{\tau_1}) \geq \tfrac13 \log n \mid Z_0\right] \geq (1-\tfrac13 \e) \Pr[K \geq \tmix \mid Z_0]\,.
\end{align}

The waiting periods $\{\tau_{j+1}-\tau_j : j=1,2,\ldots\}$ are i.i.d., and we have the estimates
\begin{align*}
\E[\tau_1 \mid Z_0], \E[\tau_{j+1}-\tau_j] &\stackrel{{\eqref{eq:hit}}}{\leq} 3 \Delta_H L^2 |V(H)|^2\,.
\end{align*}
Hence,
\begin{align*} \Pr[K < \tmix \mid Z_0]  \leq \Pr[\tau_{\lfloor \tmix\rfloor} > t \mid Z_0] \leq \frac{\E[\tau_{\lfloor \tmix\rfloor} \mid Z_0]}{t} \leq
\frac{3 \Delta_H L^2 |V(H)|^2 \tmix}{t}\,.
\end{align*}

Combined with \eqref{eq:ktmix}, this shows for $t \geq (9/\e) \Delta_H L^2 |V(H)|^2 \tmix$,
\begin{equation}\label{eq:almost}
\Pr\left[\{K > 0\} \wedge d_{G_n}(Z_{\tau_1}, Z_{\tau_K}) \geq \tfrac13 \log n \mid Z_0\right] \geq (1-\tfrac13 \e)^2 \geq 1-\tfrac23 \e\,.
\end{equation}
Now, note that as long as $K > 0$ and $\rho_{\newgraph} \notin \{Z_0, \ldots, Z_t\}$, we can couple $\{Z_0, \ldots, Z_t\}$ with the random walk $\{X_0, \ldots, X_t\}$ on $\newgraph$.
For any $T \geq 1$,
\begin{align}
\Pr\left[\rho_{\newgraph} \in \{Z_0, Z_1, \ldots, Z_t\}\right] &\leq \Pr[K > T] + \Pr[\rho_{\newgraph} \in \{Z_{\tau_1}, Z_{\tau_2}, \ldots, Z_{\tau_K}\} \mid K \leq T] \nonumber
\\ &\leq \Pr[K > T] + \frac{(T+1)}{n}\,, \label{eq:Tbnd}
\end{align}
where the last inequality follows because $\{Z_{\tau_1}, \ldots, Z_{\tau_K}\}$ is a stationary walk on $G_n$, conditioned on $\{\tau_j\}$, and because $G_n$ is regular.

We now require a basic estimate on $\tau_{j+1}-\tau_j$.  Let $Y$ denote the amount of time
needed for a random walk on $\Z$, started at the origin, to hit the set $\{-L,L\}$.  Then
$\tau_{j+1}-\tau_j$ stochastically dominates $Y$, and we have the standard identities (see, e.g., \cite{Moon73}):
\begin{align*}
\E[Y] &= L^2 \\
\Var(Y) &= \frac{2(L^4-L^2)}{3}\,.
\end{align*}
Let $\{Y_j\}$ be i.i.d. copies of $Y$, and use Chebyshev's inequality to obtain:
\[
\Pr\left(\tau_{m+1} < \tfrac12 m L^2\right) \leq \Pr\left(Y_1+\cdots+Y_{m} < m \E[Y] - \tfrac12 m L^2\right) \leq \frac{8}{3m}\,.
\]
For $T = \lceil \frac{2t}{L^2}\rceil$, this yields
\[
\Pr \left(K > T\right) \leq \Pr\left(\tau_{T+1} < t\right) \leq \frac{8}{3T} \leq \frac{2 L^2}{t}\,.
\]
Plugging this into \eqref{eq:Tbnd} gives
\[
p \seteq \Pr[\rho_{\newgraph} \in \{Z_0, Z_1, \ldots, Z_t\}] \leq \frac{2L^2}{t} + \frac{t}{L^2 n}\,.
\]
Note that $p \leq \e/6$ as long as $t \in [\frac{24 L^2}{\e} , \frac{\e L^2 n}{12}]$.

Using this in conjunction with \eqref{eq:almost}, we arrive at
\begin{align}\nonumber
\Pr\left(\Pr\left[\{K > 0 \} \wedge \rho_{\newgraph} \notin \{X_0, X_1, \ldots,X_t\} \wedge d_{G_n}(X_{\tau_1},X_{\tau_K}) \geq \frac{\log n}{3} \mid X_0 \right] \geq \frac12\right) &\geq (1-\tfrac23 \e) - 2p \\ &\geq 1-\e\,.\label{eq:almostX}
\end{align}
When $K > 0$, the triangle inequality gives us
\[
d_{\newgraph}(X_0, X_t) \geq L \cdot d_{G_n}(X_{\tau_1},X_{\tau_K}) - 2(L+3\,\diam(H))\,.
\]
Combining this with \eqref{eq:almostX}, along with the assumptions that $L \geq \diam(H)$ and $\log n \geq 48$ yields
\begin{align*}
\Pr\left(\E\left[d_{G_n}(X_{0},X_{t})^2 \1_{\left\{\rho_{\newgraph} \notin \left\{X_0, X_1, \ldots, X_t\right\}\vphantom{\bigoplus}\right\}}\mid X_0 \right]
\geq
\frac12 \frac{(L \log n)^2}{36} \right) \geq 1-\e,.
\end{align*}
completing the proof.
\end{proof}

\subsection{The recursive construction}

Observe that there is a constant $C > 0$ such that for $n \geq 4$, \begin{equation}\label{eq:expdiam} \diam(G_n) \leq C \log n\,. \end{equation}

Let us denote $n_0 = 10$ and suppose that $n_k \geq 2 n_{k-1}^2$ for $k \geq 1$. We define an inductive sequence of rooted graphs $\{H_k\}$ as follows:  $H_0$ is the graph
consisting of a single vertex, and for $k \geq 1$, \begin{align*} H_{k} &= \newgraph(G_{n_k}[n_k]; H_{k-1})\,. \end{align*}
Refer to \pref{fig:hk} for a depiction.

We begin by consulting \pref{lem:degrees} for the following estimates.  Using \eqref{eq:expdiam}, we have for $k \geq 1$: \begin{align*} \diam(H_k) &\leq \diam(G_{n_k}[n_k]) + 6\, \diam(H_{k-1}) \leq C n_k \log n_k + 6\, \diam(H_{k-1})\,.
\end{align*} Thus one can easily verify by induction that \begin{equation}\label{eq:diambnd} \diam(H_k) \leq 2 C n_k \log n_k\,. \end{equation} Moreover, \[ |V(H_k)| \leq 2 n_k^2 \left(|V(H_{k-1})|+2\,\diam(H_{k-1})\right) \leq 2 n_k^2 (|V(H_{k-1})| + 4 C n_{k-1} \log n_{k-1})\,, \] and one  verifies by induction that for $k \geq 1$, \begin{equation}\label{eq:sizebnd} n_k^2 \leq |V(H_k)| \leq 2C n_k^4\,. \end{equation} From \pref{lem:degrees}, the following bound on the vertex degrees is immediate: \begin{equation} \label{eq:degrees} \sup_{k \geq 1} \Delta_{H_k} \leq 4\,. \end{equation}

\paragraph{Levels of vertices}
The graph $H_k$ consists of a copy of $G_{n_k}[n_k]$ connected to $|V(G_{n_k}[n_k])|-1$ copies of $H_{k-1}$ via tails of length $2\,\diam(H_{k-1})$. Each such copy of $H_{k-1}$ contains a copy of $G_{n_{k-1}}[n_{k-1}]$ that is connected to $|V(G_{n_{k-1}}[n_{k-1}])|-1$ copies of $H_{k-2}$ via tails of length $2\,\diam(H_{k-2})$, and so on. If a tail connects $G_{n_{\ell}}[n_{\ell}]$ to $H_{\ell-1}$, we refer to it as a {\em level-$\ell$ tail.}

Naturally, we can think of every vertex as occurring in either in a copy of $G_{n_{\ell}}$ for some $\ell \leq k$, or in a tail between $G_{n_{\ell}}[n_{\ell}]$
and a copy of $H_{\ell-1}$.
Let $\cT^k_{\ell} \subseteq V(H_k)$ denote the set of internal vertices in level-$\ell$ tails.  Let $V_{\ell}^k \subseteq V(H_k)$ denote the set of vertices occurring in some copy of $G_{n_{\ell}}[n_{\ell}]$. Note that the sets $\{ V_{\ell}^k, \cT^k_{\ell} : \ell = 1,2,\ldots,k\}$ form a partition of $V(H_k)$.

Finally, we use the notation $\tilde G_{n_{\ell}}$ for the graph $G_{n_{\ell}}[n_{\ell}]$, together with the tail of length $\diam(H_{\ell})-1$ attached to $\rho_{G_{n_\ell}[n_{\ell}]}$, and the disjoint tails of length $\diam(H_{\ell-1})+1$ attached to all vertices of $V(G_{n_{\ell}}[n_{\ell}]) \setminus \{\rho_{G_{n_\ell}[n_{\ell}]}\}$. Observe that $V(H_k)$ partitions into a {\em disjoint union} of copies of $\tilde G_{n_{\ell}}$ with $\ell \in \{1,2,\ldots,k\}$. Accordingly, we can write $\level(x)$ for the index $\ell$ such that $x$ is in a copy of $\tilde G_{n_{\ell}}$, and $\nbors_x$ for the subgraph corresponding to $x$'s copy of $\tilde G_{n_{\level(x)}}$. We now observe the main point of the tails.

\begin{lemma}\label{lem:diffcopies} Suppose that $x,y \in V(H_k)$.  If  $\tilde G_x \neq \tilde G_y$, then either $x$ and $y$ lie on a common tail, or $d_{H_k}(x,y) \geq \diam\left(H_{\min(\level(x),\level(y))}\right)$.
\end{lemma}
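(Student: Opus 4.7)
The plan is a case analysis that tracks the shortest path from $x$ to $y$ in the \emph{piece tree}---the tree whose nodes are the disjoint copies $\tilde G_{n_\ell}$ tiling $V(H_k)$, with an edge between two pieces iff they are joined by one of the full connecting tails (of length $2\diam(H_{\ell-1})$). Since the only edges of $H_k$ between distinct pieces lie on these tails, any shortest path from $x$ to $y$ visits exactly the pieces along the unique tree path between $\tilde G_x$ and $\tilde G_y$. The key numerical feature I will exploit is the \emph{asymmetric split} of each tail: the parent-side piece owns a down-tail segment of length $\diam(H_{\ell-1})+1$, the child-side piece owns an up-tail segment of length $\diam(H_{\ell-1})-1$, and these two segments meet at a single boundary edge.

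Set $\ell=\min(\level(x),\level(y))$ and, without loss of generality, $\level(x)=\ell$. I would split on the tree distance $m$ between $\tilde G_x$ and $\tilde G_y$. In the adjacent case $m=1$, let $T$ be the unique shared tail, of length $2\diam(H_\ell)$. If both $x$ and $y$ lie on $T$ then the "common tail" alternative holds and we are done. Otherwise I decompose
\[
d(x,y) \;=\; d_{\tilde G_x}(x,\text{boundary}) \,+\, 1 \,+\, d_{\tilde G_y}(\text{boundary},y)
\]
and verify each remaining sub-case (both off $T$; $x$ on $T$ and $y$ off; vice versa) using the asymmetric split $\diam(H_\ell)\pm 1$. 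A short computation shows the sum is always at least $\diam(H_\ell)$, with the bound saturated when, for instance, $x$ sits at the very end of the parent-side segment and $y$ is immediately inside the child-side main graph $G_{n_\ell}[n_\ell]$---this explains the choice of offsets $\pm 1$.

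In the non-adjacent case $m\ge 2$, the tree path from $\tilde G_x$ must first move to its parent piece $P_1$, which is necessarily at level $\ell+1$ (descendants in the piece tree have strictly smaller level, and we assumed $\level(y)\ge\ell$, so $\tilde G_y$ is not a descendant of $\tilde G_x$). The path $\gamma$ enters $P_1$ at the boundary of the tail from $\tilde G_x$ and must exit through the boundary of some other tail of $P_1$---either another down-tail (leading to a different child), contributing two segments of length $\diam(H_\ell)+1$ separated by a non-negative walk in $G_{n_{\ell+1}}[n_{\ell+1}]$, or $P_1$'s own up-tail, contributing one segment of length $\diam(H_\ell)+1$ and another of length $\diam(H_{\ell+1})-1 \ge \diam(H_\ell)-1$. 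In either configuration the traversal of $P_1$ alone contributes at least $2\diam(H_\ell)$ to $d(x,y)$, comfortably exceeding the required $\diam(H_\ell)$.

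The only real obstacle is the bookkeeping in the adjacent case: one must confirm that the choice $\diam(H_\ell)\pm 1$ of segment lengths is exactly tight against the subcase where $x$ is at the far end of the parent-side segment of $T$ while $y$ is just off $T$ inside the child's main graph. Once this arithmetic is nailed down, the rest reduces to routine path-length computations, and in particular no inductive machinery on $k$ is needed---the argument is local to $\tilde G_x$, $\tilde G_y$, and at most one intermediate piece.
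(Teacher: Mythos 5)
Your argument is correct, and it is exactly the verification the paper leaves implicit: Lemma~\ref{lem:diffcopies} is stated without proof as "the main point of the tails," and the intended mechanism is precisely your observation that the pieces form a tree whose only inter-piece edges are the mid-tail boundary edges, so that in the adjacent case the asymmetric $\diam(H_{\ell-1})\pm 1$ split of a length-$2\diam(H_{\ell-1})$ tail forces distance at least $\diam\bigl(H_{\min(\level(x),\level(y))}\bigr)$, while in the non-adjacent case the forced traversal of the parent piece's tail segment alone already contributes at least that much. Your identification of the tight configuration (one endpoint at the far end of the parent-side segment, the other just off the tail inside the child piece) correctly explains why the offsets in the definition of $\tilde G_{n_\ell}$ are chosen as they are.
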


\begin{lemma}\label{lem:Gtilde} For any $n \in 2\N$, $v \in V(\tilde G_{n})$, and $r \geq 1$, it holds that \[ |B_v^{\tilde G_n}(r)| \leq 3 r^3\,. \] \end{lemma}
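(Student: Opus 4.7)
\emph{Proof plan.} The approach is to exploit the rigid ``stretched expander'' structure of $\tilde G_n$: outside of the $n$ junction vertices $V(G_n) \subseteq V(\tilde G_n)$, every vertex has degree at most $2$, so any BFS tree rooted at $v$ can branch only at junctions. Moreover, two distinct junctions in $\tilde G_n$ are always separated by a distance that is an integer multiple of $n$, since any path connecting them must traverse whole subdivided edges of length $n$ (the tails dangle from single junctions and do not help connect two junctions to one another). Consequently, $d_{\tilde G_n}(u,u') = n\cdot d_{G_n}(u,u')$ for all $u,u' \in V(G_n)$.

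Write $J = V(G_n) \cap B_v^{\tilde G_n}(r)$ for the junctions in the ball. If $J$ is empty then $B_v^{\tilde G_n}(r)$ is contained in a single path (an edge interior or a tail), whence $|B_v^{\tilde G_n}(r)| \leq 2r+1$. Otherwise, fix $u_0 \in J$; by the triangle inequality every $u \in J$ satisfies $d_{G_n}(u_0, u) \leq \lfloor 2r/n \rfloor$, and the $3$-regularity of $G_n$ yields the BFS bound $|J| \leq 3 \cdot 2^{\lfloor 2r/n \rfloor}$; trivially $|J| \leq n$ always. For the non-junction contribution, each $u \in J$ is incident to at most three subdivided edges and at most one tail, and the vertices of those paths that lie in $B_v^{\tilde G_n}(r)$ are within distance $r - d(v,u) \leq r$ of $u$ along the respective path, so they contribute at most $r$ vertices each. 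Summing over $J$ (with a harmless factor-of-two over-count for edges with both endpoints in $J$) gives
\[
|B_v^{\tilde G_n}(r)| \;\leq\; |J| + 7\,|J|\,r + (2r+1) \;=\; O\!\left(|J|\cdot r + r\right).
\]

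The proof then concludes with a case split on $r$ versus $n$. When $r < n$, $\lfloor 2r/n \rfloor \leq 1$ and hence $|J| \leq 6$, giving $|B_v^{\tilde G_n}(r)| \leq O(r)$. When $r \geq n$, the trivial bound $|J| \leq n \leq r$ yields $|B_v^{\tilde G_n}(r)| \leq O(r^2)$. In either regime this is dominated by $3 r^3$ once $r$ exceeds a small absolute constant, and the handful of small values of $r$ are checked directly from $\Delta_{\tilde G_n} \leq 4$. The main subtlety I expect is careful bookkeeping around the (potentially long) root tail attached to $\rho_{G_n}$, which can have length up to $\diam(H_\ell) - 1 = O(n\log n)$; however, because its contribution to $B_v^{\tilde G_n}(r)$ is bounded by $\min(\text{tail length},\, r - d(v, \rho_{G_n})) \leq r$, the per-junction $r$-contribution estimate still holds uniformly, and the polynomial ball bound follows.
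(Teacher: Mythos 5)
There is a genuine gap, and it comes from a misreading of the construction of $\tilde G_n$. Your structural premise --- that outside the $n$ junction vertices $V(G_n)$ every vertex of $\tilde G_n$ has degree at most $2$, so that tails dangle only from junctions --- is false. Since $H_k = \newgraph(G_{n_k}[n_k]; H_{k-1})$ and the construction of $\newgraph(G;H)$ attaches a copy of $H$ via a tail to \emph{every} vertex of $V(G)\setminus\{\rho_G\}$, here $G = G_{n_\ell}[n_\ell]$ is the already-subdivided expander: every interior vertex of every subdivided edge carries its own tail of length $\diam(H_{\ell-1})+1$ in $\tilde G_{n_\ell}$. (This is why $\Delta_{H_k}=4$ rather than $3$, and why $|V(H_k)| \asymp n_k^2\,|V(H_{k-1})|$ in \eqref{eq:sizebnd}.) Your count $|B_v^{\tilde G_n}(r)| \leq |J| + 7|J|r + (2r+1)$ therefore omits the tails hanging off the roughly $\Delta_{G_n} r$ non-junction subdivision vertices inside the ball, and these are the dominant term: already for $v$ a junction and $r = n/2$ the ball contains $\Theta(r)$ subdivision vertices each contributing up to $r$ vertices of its private tail, so $|B_v^{\tilde G_n}(r)| = \Theta(r^2)$, contradicting your claimed $O(r)$ bound in the regime $r<n$. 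Similarly your $O(r^2)$ bound for $r\geq n$ should be $O(r n^2)=O(r^3)$.

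The good news is that the conclusion survives once you insert the missing factor of $r$ (one extra $\leq r$ tail vertices per vertex of $G_n[n]$ in the ball), and this corrected count is exactly the paper's two-line proof: for $r\leq n$, $|B_v^{\tilde G_n}(r)| \leq r\cdot|B_v^{G_n[n]}(r)| \leq r\cdot r\Delta_{G_n}$, and for $r>n$, $|B_v^{\tilde G_n}(r)| \leq r\,|V(G_n[n])| \leq 2rn^2 \leq 2r^3$. Your junction-counting via expander BFS ($|J| \leq 3\cdot 2^{\lfloor 2r/n\rfloor}$) is correct but unnecessary; the crude bounds ``at most $\Delta_{G_n} r$ subdivision vertices when $r\leq n$'' and ``at most $|V(G_n[n])|$ in general'' suffice. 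As a rewrite, your argument should bound the ball restricted to $V(G_n[n])$ (plus the root tail) first, and then multiply by $r+1$ to account for the per-vertex tails.
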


\begin{proof} If $r \leq n$, then \[|B_v^{\tilde G_n}(r)| \leq r |B_v^{G_n[n]}(r)| \leq r^2 \Delta_{G_n} \leq 3r^2\,.\] Otherwise, $|B_v^{\tilde G_n}(r)| \leq r |V(G_n[n])| \leq 2 r n^2 \leq 2 r^3$. \end{proof}

Observe also the basic estimate:  For $k \geq 1$, \begin{equation}\label{eq:easydiam} \diam(H_k) \geq n_k \,\diam(G_{n_k}) \geq 2 \,n_k\,. \end{equation}

\begin{lemma}[Polynomial volume growth] \label{lem:polyvol} For every $k \geq 1$, $r \geq 0$, and $x \in V(H_k)$, it holds that \[ |B_x^{H_k}(r)| \leq 6C r^7\,. \] \end{lemma}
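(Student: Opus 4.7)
The plan is to decompose the ball $B = B_x^{H_k}(r)$ along the partition of $V(H_k)$ into disjoint copies of $\tilde G_{n_\ell}$ (one copy at each node of the ``copy tree'' described just before \pref{lem:diffcopies}). Letting $\cC$ denote the set of copies meeting $B$, I would first observe that for any copy $C \in \cC$ the intrinsic metric $d_C$ agrees with $d_{H_k}$ restricted to $V(C)\times V(C)$: removing $V(C)$ from $H_k$ separates it into components corresponding to the neighbors of $C$ in the copy tree, so no shortest path between two points of $C$ can leave $C$ and return. Consequently, every shortest $H_k$-path from $x$ into $C$ enters through a unique boundary vertex $y = y(C) \in V(C)$ (the inside endpoint of the tail lying on the unique tree path from $C_x = \tilde G_{n_{\level(x)}}$ to $C$), and the inequality $d_C(y,z) = d_{H_k}(y,z) \le d_{H_k}(x,z) \le r$ together with \pref{lem:Gtilde} gives $|B \cap C| \le |B_y^{C}(r)| \le 3 r^3$.

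It then remains to show $|\cC| \le 2 C r^4$. By \pref{lem:diffcopies}, and noting that distinct copies at the same level cannot share a tail (tails connect only adjacent levels), any two level-$\ell$ copies are at distance $\ge \diam(H_\ell) \ge 2 n_\ell$ in $H_k$ (using \eqref{eq:easydiam}). Set $\ell^\star = \max\{\ell : \diam(H_\ell) \le 2r\}$. For $\ell > \ell^\star$: two distinct level-$\ell$ copies in $B$ would have representatives within $2r$ of one another, contradicting $\diam(H_\ell) > 2r$, so at most one such copy appears in $B$, and a short argument via \pref{lem:diffcopies} further forces it to be an ancestor of $C_x$; hence the levels above $\ell^\star$ contribute only a chain of ancestor copies, absorbed by the ``children of $C_x$'' estimate below. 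For $\ell \le \ell^\star$: going from any level-$\ell$ copy to a child at level $\ell-1$ costs a tail of length $2\,\diam(H_{\ell-1}) \ge 4 n_{\ell-1}$, so children can only be reached when $n_{\ell-1} \le r/4$; the construction constraint $n_\ell \ge 2 n_{\ell-1}^2$ (paired with choosing the sequence $\{n_\ell\}$ with no superfluous slack) then bounds the branching factor by $|V(G_{n_\ell}[n_\ell])| - 1 \le 2 n_\ell^2 \le O(n_{\ell-1}^4) \le O(r^4)$. The super-exponential growth of the $n_\ell$'s guarantees that descending past the first reachable child level consumes only a vanishing fraction of the remaining budget, so the total count of reachable copies telescopes into $|\cC| \le 2 C r^4$.

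Combining the two estimates yields $|B| \le 3 r^3 \cdot 2 C r^4 = 6 C r^7$, which is the claim. The main technical obstacle is the counting in the second paragraph: one must simultaneously control a copy tree with potentially enormous branching factor $\approx n_\ell^2$ at each level, and a depth that may reach many levels, while keeping the bound uniform in $k$. The resolution comes from the super-exponential growth inherited from $n_\ell \ge 2 n_{\ell-1}^2$: it ensures that essentially one tail traversal consumes the whole distance budget (deciding which child level is reached), so the combinatorics effectively reduces to counting the children of a single copy, contributing the dominant $r^4$ factor, while all other levels together contribute only a bounded ancestor chain.
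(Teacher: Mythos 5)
Your first paragraph (each copy of $\nbors_{n_j}$ is isometrically embedded, so its intersection with the ball is controlled by \pref{lem:Gtilde}) is fine, but the counting step $|\cC| \leq 2Cr^4$ is a genuine gap, and the overall decomposition $(\text{number of copies}) \times (3r^3 \text{ per copy})$ cannot yield $O(r^7)$. Two concrete problems. First, the inequality $n_\ell \ge 2 n_{\ell-1}^2$ is a \emph{lower} bound, so it gives $|V(G_{n_\ell}[n_\ell])|-1 \approx \tfrac32 n_\ell^2 \ge 6 n_{\ell-1}^4$, not an upper bound on the branching factor; and the construction in fact takes $n_\ell$ enormously larger than $2n_{\ell-1}^2$ (see the final choice of $\{n_\ell\}$ in the proof of \pref{thm:exceptional}), so no upper bound on $n_\ell$ in terms of $n_{\ell-1}$ is available. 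Second, even granting "no slack" ($n_j = 2n_{j-1}^2$), the count fails: take $n_{\ell-1}=m$ with $C'm\log m \le r < n_\ell$. The ball then meets $\Theta(r)$ vertices $u$ of $G_{n_\ell}[n_\ell]$ whose attached copies of $H_{\ell-1}$ it contains \emph{entirely}, and a single $H_{\ell-1}$ contains roughly $\prod_{j\le \ell-1} \tfrac32 n_j^2 \approx m^4$ copies of the $\nbors_{n_j}$'s (dominated by the level-$1$ copies). With $m \approx r/\log r$ this makes $|\cC| = \Omega(r^5/\log^4 r) \gg r^4$, and $3r^3\,|\cC| = \Omega(r^8/\log^4 r)$, overshooting the claim. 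The root cause is that charging $3r^3$ to each of the huge number of tiny low-level copies is far too pessimistic.

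The correct accounting (and the paper's) does not count copies at all levels. After disposing of the degenerate cases (ball contained in a single tail, giving $\le 2r$; ball contained in a single copy $\nbors_{n_j}$ with $j>\ell$, giving $\le 3r^3$), one is left with the case where the top level met by the ball is $\ell$, with $n_{\ell-1}\le r < n_\ell$. By \pref{lem:diffcopies} the ball meets a unique level-$\ell$ copy, intersecting it in at most $3r^3$ vertices by \pref{lem:Gtilde}; everything else in the ball lies in the copies of $H_{\ell-1}$ hanging below those vertices, and each such copy is charged its \emph{total} size $|V(H_{\ell-1})| \le 2Cn_{\ell-1}^4 \le 2Cr^4$ via \eqref{eq:sizebnd}. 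This gives $3r^3\cdot 2Cn_{\ell-1}^4 \le 6Cr^7$ directly, with no need to enumerate the copies inside $H_{\ell-1}$. If you want to keep your copy-by-copy decomposition, you must replace the uniform $3r^3$ by $\min\{3r^3, |V(C)|\}$ and then sum $|V(C)|$ over all copies inside each $H_{\ell-1}$, which is exactly the $|V(H_{\ell-1})|$ bound above.
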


\begin{proof} Consider $n_{\ell-1} \leq r < n_{\ell}$. The main idea is that if $j > \ell$, then from \pref{lem:diffcopies}, we know the ball $B_x^{H_k}(r)$ cannot intersect both the top and bottom half of a level-$j$ tail $\gamma$ unless $B_x^{H_k}(r) \subseteq V(\gamma)$, because the length of $\gamma$ is at least $2 \,\diam(H_{\ell}) \geq 4 n_{\ell}$.

More precisely, from \pref{lem:diffcopies} we know that one of the following cases occurs:
\begin{enumerate}
\item The ball $B_x^{H_k}(r)$ is completely contained in some tail. In this case, clearly $|B_x^{H_k}(r)| \leq 2r$.
\item The ball $B_x^{H_k}(r)$ is contained in a copy of $\tilde G_{n_j}$ for an index $j > \ell$.  In this case, $|B_x^{H_k}(r)| \leq 3r^3$ from \pref{lem:Gtilde}.
\item It holds that $\max \left\{ \level(v) : v \in B_x^{H_k}(r)\right\}\leq \ell$.
In this case, use \pref{lem:Gtilde} to write
\[ |B_x^{H_k}(r)| \leq \left(\max_{v \in V(\tilde G_{n_{\ell}})} \left|B_v^{\tilde G_{n_{\ell}}}(r)\right|\right) |V(H_{\ell-1})| \leq 3r^3 |V(H_{\ell-1})|
\stackrel{\eqref{eq:sizebnd}}{\leq} 6 C r^3  n_{\ell-1}^4 \leq 6Cr^7
\,.\qedhere \]
\end{enumerate}\end{proof}

Let $\{X_t\}$ denote the random walk on $H_k$ where $X_0$ has law $\pi_{H_k}$.

\begin{lemma}[Speed of the random walk]
\label{lem:finitespeed}
There is a constant $c > 0$ such that the following
holds:  For all $\e > 0$ and $\ell$ sufficiently large (with respect to $\e$),
if $k \geq \ell$ and
\[
t \in \left[\frac{c}{\e^2} n_{\ell}^2 (\log n_{\ell}) |V(H_{\ell-1})|^2,\frac{\e n_{\ell}^3}{c}\right]\,,
\]
then
\[
\Pr\left(\E\left[d_{H_k}(X_0, X_t)^2 \mid X_0\right] \geq \frac{\left(n_{\ell} \log n_{\ell}\right)^2}{72}\right)  \geq 1-\e\,.
\]
\end{lemma}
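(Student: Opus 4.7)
The plan is to reduce this lemma to \pref{lem:nozero}, applied inside a single copy of $H_\ell$ sitting inside $H_k$. Each copy $H_\ell^{(u)}$ of $H_\ell$ in $H_k$ is attached to the rest of the graph only through its root $\rho_u$, which is therefore a cut vertex, and the copies $\{H_\ell^{(u)}\}$ partition all vertices at levels $\le \ell$. A calculation based on \eqref{eq:sizebnd} together with the recursion $n_j \geq 2 n_{j-1}^2$ shows that these copies carry $\pi_{H_k}$-mass at least $1 - O(1/n_\ell^2)$; the residual at levels $> \ell$ is thus $\leq \e/4$ for $\ell$ sufficiently large.

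I would then fix a copy $H_\ell^{(u)}$, identify it canonically with $H_\ell$, and compare the two walks. Every non-root vertex has the same degree in $H_k$ as in $H_\ell$, so $\pi_{H_k}$ restricted and normalized on $V(H_\ell^{(u)})\setminus\{\rho_u\}$ coincides with $\pi_{H_\ell}$ restricted and normalized on $V(H_\ell)\setminus\{\rho_{H_\ell}\}$; the root itself carries only $\pi_{H_\ell}$-mass $O(1/|V(H_\ell)|)$. I would couple the $H_k$-walk $\{X_s\}$ from $X_0 \in V(H_\ell^{(u)})\setminus\{\rho_u\}$ with the $H_\ell$-walk $\{X_s'\}$ from the corresponding point. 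The coupling is exact until the first visit of either walk to its root, and because $\rho_u$ is a cut vertex, on that event $d_{H_k}(X_0,X_t) = d_{H_\ell}(X_0',X_t')$.

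Next I would verify the hypotheses of \pref{lem:nozero} for $H_\ell = \newgraph(G_{n_\ell}[n_\ell]; H_{\ell-1})$ with $L = n = n_\ell$ and $H = H_{\ell-1}$. The bound $L \geq \diam(H_{\ell-1})$ follows from \eqref{eq:diambnd} combined with $n_\ell \geq 2 n_{\ell-1}^2$; $\log n \geq 48$ holds for $\ell$ large; $\Delta_{H_{\ell-1}} \leq 4$ by \eqref{eq:degrees}; and $\tmix(G_{n_\ell}) \leq C \log n_\ell$. After substitution the time range in \pref{lem:nozero} matches the one claimed here up to an absolute constant. The lemma then yields, for a $(1-\e/2)$-fraction of $X_0'\sim \pi_{H_\ell}$, the bound $\E[d_{H_\ell}(X_0',X_t')^2 \cdot \1_{\{\rho_{H_\ell}\notin\{X_0',\ldots,X_t'\}\}} \mid X_0'] \geq (n_\ell\log n_\ell)^2/72$. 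Dropping the indicator and invoking the coupling transfers this into $\E[d_{H_k}(X_0,X_t)^2\mid X_0] \geq (n_\ell\log n_\ell)^2/72$ for the same fraction of $X_0$ under $\pi_{H_k}$ restricted to the copy.

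Finally I would sum over the disjoint copies $\{H_\ell^{(u)}\}$ and absorb three small loss terms, each at most $\e/4$ for $\ell$ sufficiently large: the residual $\pi_{H_k}$-mass $O(1/n_\ell^2)$ outside the copies, the root mass $O(1/|V(H_\ell)|)$ in each copy from the $\pi_{H_\ell}$-to-$\pi_{H_k}$ comparison, and the $\e/2$ exceptional set from \pref{lem:nozero}. I expect the only obstacle to be bookkeeping — aligning the time range in \pref{lem:nozero} with that in the present lemma through the substitutions $L = n = n_\ell$ and $H = H_{\ell-1}$, and confirming the mass estimates through the tree decomposition via \eqref{eq:diambnd}, \eqref{eq:sizebnd}, and \eqref{eq:degrees}. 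The conceptual core, namely localization to a single copy of $H_\ell$ via a cut vertex together with the coupling, follows immediately from the recursive construction of $H_k$.
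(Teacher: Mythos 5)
Your proposal is correct and follows essentially the same route as the paper: localize $X_0$ to a copy of $H_\ell$ (which carries all but an $o(1)$ fraction of the stationary mass), couple the walk with the stationary walk on $H_\ell$ until the root of the copy is hit, and invoke \pref{lem:nozero} with $\newgraph = H_\ell = \newgraph(G_{n_\ell}[n_\ell];H_{\ell-1})$, whose indicator makes dropping it harmless for the lower bound. The only quibble is quantitative: the residual mass outside the copies is dominated by the tails and is $O(\log n_{\ell}/n_{\ell})$ rather than $O(1/n_\ell^2)$, but this still vanishes as $\ell \to \infty$, so the argument is unaffected.
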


\begin{proof}
For $1 \leq \ell \leq k$, let $p_{\ell,k}$ denote the probability
that a vertex $v \in V(H_k)$ chosen uniformly at random
{\em does not} fall in some copy of $H_{\ell}$.
First, we use \eqref{eq:sizebnd} and \eqref{eq:diambnd} to bound
\begin{equation}\label{eq:pk}
p_{k-1,k} \leq \frac{|V(G_{n_k}[n_k])|\cdot 2\,\diam(H_{k-1})}{\left(|V(G_{n_k}[n_k])|-1\right)|V(H_{k-1})|}
\leq \frac{O(\log n_{k-1})}{n_{k-1}}\,,
\end{equation}
This yields
\[
p_{\ell,k} \leq \sum_{j=\ell}^{k-1} p_{j,j+1} \leq O(1)  \sum_{j=\ell}^{k+1} \frac{\log n_j}{n_j}\,.
\]
Observe that since $\Delta_{H_k} \leq 4$, the probability that a vertex
chosen from the stationary measure does not fall in some copy of $H_{\ell}$ is bounded by $4 p_{\ell,k}$.
Recall that the sequence $\{n_j\}$ is increasing rapidly:  $n_{j+1} \geq 2 n_j^2$.
Let $\ell$ be chosen large enough so that $4 p_{\ell,k} < \e$.

Let $\cE(\ell)$ denote the event that $X_0$ lies in a copy of $H^*_{\ell}$
of $H_{\ell}$.
We have $\Pr[\cE(\ell)] \geq 1-\e$.
Moreover, conditioned on $\cE(\ell)$, if the random walk $\{X_0, X_1, \ldots, X_t\}$ avoids
the root $\rho_{H^*_{\ell}}$, then it can be coupled to a stationary random walk on $H_{\ell}$.
Now applying \pref{lem:nozero} with $\newgraph=H_{\ell}$ yields the desired result.
\end{proof}

\subsection{Convergence to a stationary random graph}

Let $\rho_k \in V(H_k)$ be chosen according
to the stationary measure $\pi_{H_k}$,
and let $\mu_k$ be the law of the random rooted graph $(H_k, \rho_k)$.

 \begin{lemma}\label{lem:local-limit}
The measure $\mu \seteq \lim_{k \to \infty} \mu_k$ exists in the local weak topology.
Moreover, if $(G,\rho)$ has the law of $\mu$, then $(G,\rho)$ is a stationary
 random graph such that, almost surely, $\sup_{x \in V(G)} \deg_G(x) \leq 4$, and $|B_{\rho}^G(r)| \leq 6C r^7$ for all $r \geq 1$. \end{lemma}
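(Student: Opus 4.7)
The plan is to establish three things in sequence: (i) Cauchyness of $\{\mu_k\}$ in the local weak topology, which gives existence of $\mu$; (ii) transfer of the deterministic bounded-degree and polynomial-growth estimates to the limit; and (iii) stationarity of $(G,\rho)$.

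For (i), I will fix $r \geq 1$ and a bounded function $f : \cG_{\bullet} \to \R$ depending only on the isomorphism class of the rooted $r$-ball, and show that $\{\mu_k(f)\}$ is Cauchy in $\R$. The key observation is that $H_{k+1} = \newgraph(G_{n_{k+1}}[n_{k+1}]; H_k)$ is built from $|V(G_{n_{k+1}}[n_{k+1}])|-1$ disjoint copies of $H_k$ joined through a central $G_{n_{k+1}}[n_{k+1}]$ via tails of length $2\,\diam(H_k)$. Call $v \in V(H_{k+1})$ \emph{deep} if it lies in some copy of $H_k$ at distance more than $r$ from $\rho_{H_k}$ and is not on any tail. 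For a deep $v$, the rooted $r$-ball of $v$ in $H_{k+1}$ is literally isomorphic to its $r$-ball viewed inside the appropriate copy of $H_k$. Combining \pref{lem:polyvol}, \eqref{eq:diambnd} and \eqref{eq:sizebnd}, the number of non-deep vertices is at most $O\big(|V(G_{n_{k+1}}[n_{k+1}])|\cdot(\diam(H_k)+r^7)\big)$, whereas $|V(H_{k+1})| \geq \tfrac12|V(G_{n_{k+1}}[n_{k+1}])|\cdot|V(H_k)|$. Since $\Delta_{H_{k+1}} \leq 4$, this yields $|\mu_{k+1}(f)-\mu_k(f)| \leq \epsilon_k \|f\|_\infty$ with $\epsilon_k = O((n_k \log n_k + r^7)/n_k^2)$. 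Because $n_{k+1} \geq 2 n_k^2$ grows doubly-exponentially, $\sum_k \epsilon_k < \infty$, so $\{\mu_k(f)\}$ is Cauchy and $\mu = \lim_k \mu_k$ exists in the local weak topology.

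For (ii), both $\{\deg_G(x) \leq 4\text{ for all }x \in B_\rho^G(r)\}$ and $\{|B_\rho^G(r)| \leq 6Cr^7\}$ are clopen events in $\cG_{\bullet}$ that depend only on the rooted $(r+1)$-ball, and by \eqref{eq:degrees} and \pref{lem:polyvol} they carry $\mu_k$-probability one for every $k$; passing to $\mu$ and then taking a countable intersection over $r \in \N$ yields the two pointwise conclusions almost surely (using that $G$ is connected in the limit, since each $H_k$ is). For (iii), the stationarity of $\pi_{H_k}$ under simple random walk on $H_k$ is exactly the identity $(H_k, \rho_k) \stackrel{\mathrm{law}}{=} (H_k, X_1^{(k)})$, where $X_1^{(k)}$ is one random walk step from $\rho_k$. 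I will check that the one-step transition operator is continuous in the local weak topology on graphs of degree bounded by $4$ (any $r$-ball-supported test function pulls back through this operator to an $(r+1)$-ball-supported test function), so stationarity passes to the limit along the convergence established in (i).

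The hard part will be the accounting in (i): verifying that when $\pi_{H_{k+1}}$ is conditioned on landing deep in a given copy of $H_k$, the conditional law really equals $\pi_{H_k}$ restricted to the deep interior of $H_k$, up to normalization. This reduces to noting that the only vertex whose degree differs between a copy of $H_k$ inside $H_{k+1}$ and $H_k$ itself is $\rho_{H_k}$ (which acquires one extra neighbor from the attached tail); since $\rho_{H_k}$ is by definition not deep, it contributes only to the $\epsilon_k$ error term. An iterated application of the same observation inside $H_k$ itself shows that conditioning $\pi_{H_k}$ on the deep interior of $H_k$ is also close to $\pi_{H_k}$ in total variation, with an error of the same doubly-exponentially summable form, which closes the Cauchy argument.
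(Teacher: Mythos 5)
Your proposal is correct and follows essentially the same route as the paper: both arguments show that all but a summable fraction (in $k$) of the $\pi_{H_{k+1}}$-mass sits deep inside copies of $H_k$, where the rooted $r$-ball is unchanged, and then sum the resulting total-variation errors. You are in fact somewhat more careful than the paper on two points it leaves implicit --- the degree discrepancy at $\rho_{H_k}$ (so the coupling of roots is only approximate) and the continuity argument transferring stationarity to the limit, which the paper delegates to the reference \cite{ben-schramm}.
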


\begin{proof} Assuming that the limit exists, the latter assertions follow from \eqref{eq:degrees} and \pref{lem:polyvol}.

By definition of the local weak topology, to prove convergence of the measures $\mu_k$, it suffices to show that for every $r > 0$, the measures $\mu_{k,r}$ converge, where $\mu_{k,r}$ is the law of $B_{\rho_k}^{H_k}(r)$. A standard application of Kolmogorov's extension theorem then proves the existence of the limit $\mu$. For more details, see \cite{ben-schramm}.

Let $\cE$ denote the event that $\rho_k$ lies in a copy of $H_{k-1}$. Observe that (recall \eqref{eq:pk}):
\[ \Pr[\neg \cE] \leq \frac{\Delta_{H_k} |V(G_{n_k}[n_k])|\cdot 2\,\diam(H_{k-1})}{\left(|V(G_{n_k}[n_k])|-1\right)|V(H_{k-1})|} \leq O\left(\frac{\log n_{k-1}}{n_{k-1}}\right)\,. \]

Suppose that $\cE$ occurs, and let $H^*_{k-1}$ denote the copy of $H_{k-1}$ in $H_k$
containing $\rho_k$.  In this case, we can couple $\rho_k$ and $\rho_{k-1} \in V(H_{k-1})$ in the obvious way.
Note furthermore that $(B_{\rho_k}^{H_k}(r),\rho_k)$ and $(B_{\rho_{k-1}}^{H_{k-1}}(r),\rho_{k-1})$ are coupled
(under the natural isomorphism) as long as $d_{H_{k-1}}(\rho_{k-1}, \rho_{H_{k-1}}) > r$. This yields \[ \Pr\left[d_{H_{k-1}}(\rho_{k-1}, \rho_{H_{k-1}}) \leq r\right] \leq \frac{|B_{\rho_{H_{k-1}}}(r)|}{|V(H_{k-1})|} \leq \frac{4^r}{n_{k-1}^2}\,, \]
since $\Delta_{H_{k-1}} \leq 4$. We conclude that, for fixed $r > 0$, it holds that \[ d_{TV}\left(\mu_{k-1,r},\mu_{k,r}\right) \leq O\left(\frac{\log n_{k-1}}{n_{k-1}}\right)\,. \]
Since $\frac{\log n_{k}}{n_{k}}$ is summable, this yields
the desired convergence as $k \to \infty$.
\end{proof}

We are ready to complete the proof of \pref{thm:exceptional}.

\begin{proof}[Proof of \pref{thm:exceptional}]
Let $(G,\rho)$ be the limit of $(H_k,\rho_k)$ constructed in \pref{lem:local-limit}.
Properties (1) and (2) are satisfied by the statement of the lemma.
Let $\{\e_\ell\}$ denote a sequence with $\e_{\ell} \to 0$ as $\ell \to \infty$
and such that \pref{lem:finitespeed} applies to $\e=\e_{\ell}$ for $k \geq \ell$.

The third property follows from \pref{lem:finitespeed}
by choosing the sequence $\{n_{\ell}\}$ to grow fast enough so that
\[
|V(H_{\ell-1})|^2 \stackrel{\eqref{eq:sizebnd}}{\leq} O(n_{\ell-1}^8) \leq \frac{o(\log n_{\ell})}{\e_{\ell}^2 f(n_{\ell})}\,.
\]
as $\ell \to \infty$.
\end{proof}

\bibliographystyle{alpha} \bibliography{diffusive,mt}

\end{document}